\newtheorem{thm}{Theorem}[section]
\newtheorem{prop}[thm]{Proposition}
\newtheorem{lem}[thm]{Lemma}
\newtheorem{cor}[thm]{Corollary}
\newtheorem{rk}[thm]{Remark}
\newtheorem{ex}[thm]{Example}
\newtheorem*{rk*}{Remark}
\newtheorem{defi}[thm]{Definition}
\newtheorem*{thm*}{Theorem}
\newtheorem*{prop*}{Proposition}
\newtheorem*{lem*}{Lemma}
\newcommand{\E}{\mathbb{E}}
\renewcommand{\P}{\mathbb{P}}
\newcommand{\e}{\mathbbm{e}}
\newcommand{\R}{\mathbb{R}}
\newcommand{\N}{\mathbb{N}}
\newcommand{\Z}{\mathbb{Z}}
\newcommand{\cB}{\mathcal{B}}
\newcommand{\cA}{\mathcal{A}}
\newcommand{\cT}{\mathcal{T}}
\newcommand{\D}{\mathbb{D}}
\author{Paul Thevenin\footnote{CMAP \& \'Ecole polytechnique, paul.thevenin@polytechnique.edu \newline The author acknowledges partial support from Agence Nationale de la Recherche,
Grant Number ANR-14-CE25-0014 (ANR GRAAL).}}
\title{Vertices with fixed outdegrees in large Galton--Watson trees}
\date{}
\begin{document}

\maketitle
\begin{abstract}
\small{We are interested in nodes with fixed outdegrees in large conditioned Galton--Watson trees. We first study the scaling limits of processes coding the evolution of the number  of such nodes in different explorations of the tree (lexicographical order and contour visit counting process) starting from the root. We give necessary and sufficient conditions for the limiting processes to be centered, thus measuring the linearity defect of the evolution of the number of nodes with fixed outdegrees. This extends results by Labarbe \& Marckert in the case of the contour visit counting process of leaves in uniform plane trees. Then, we extend results obtained by Janson concerning the asymptotic normality of the number of nodes with fixed outdegrees.}
\end{abstract}

\section{Introduction}

Much attention has been recently given to the fine structure of large random trees. In this paper, we focus particularly on the distribution of vertex degrees in large conditioned Galton--Watson trees, and on how they are spread out in these trees.

\paragraph*{Motivations.} The study of scaling limits of Galton--Watson trees (in short, GW trees) with critical offspring distribution (that is with mean $1$) conditioned by their number of vertices has been initiated by  Aldous \cite{Ald91a, Ald91b, Ald93}. Aldous showed that the scaling limit of large critical GW trees with finite variance is the so-called Brownian continuum random tree (CRT). As a side result, he proved the convergence of their properly rescaled contour functions, which code the trees, to the Brownian excursion. This result was extended by Duquesne, who showed that the scaling limits of critical GW trees, when the offspring distribution has infinite variance and is in the domain of attraction of a stable law, are $\alpha$-stable trees (with $\alpha \in(1,2]$), which were introduced by Le Gall \& Le Jan \cite{LGLJ98} and Duquesne \& Le Gall \cite{DLG02}. From a more discrete point of view,  Abraham and Delmas \cite{AD14b,AD14a} extended the work of Kesten \cite{Kes86} and Janson \cite{Jan12} by describing in full generality the local limits of critical GW trees conditioned to have a fixed large number of vertices.

The number of vertices with a fixed outdegree in large conditioned critical GW trees with finite variance was studied by Kolchin \cite{Kol86}, who showed that it is asymptotically normal. This topic has recently triggered a renewed interest. Minami \cite{Min05} established that these convergences hold jointly under an additional moment condition, which was later lifted by Janson \cite{Jan16}. Rizzolo \cite{Riz15}   considered more generally  GW trees conditioned on a given number of vertices with outdegree in a given set.  One of the motivations for studying these quantities is that there is a variety of random combinatorial models coded by GW trees in which vertex degrees represent a quantity of interest. For example, in \cite{AB15}, vertex degrees code sizes of $2$-connected blocs in random maps and, in \cite{Kor14}, vertex degrees code sizes of faces in dissections.  Also, Labarbe \& Marckert \cite{LM07} studied the evolution of the number of leaves in the contour process of a large uniform plane tree.

\paragraph*{Evolution of vertices with fixed outdegrees.} Our first contribution concerns scaling limits of processes coding the evolution of vertices with fixed outdegrees in different explorations of large GW trees   starting from the root. We shall explore the tree in two ways by using either the contour process (which was considered by Labarbe \& Marckert \cite{LM07}), or the lexicographical order.

In order to state our result, we need to introduce some quick background and notation (see Section \ref{background} for formal definitions). An offspring distribution $\mu$, which is a probability distribution on $\mathbb{Z}_{+}$, is said to be critical if it has mean $1$. To simplify notation, we set $\mu_{i}=\mu(i)$ for $i \geq 0$. If $T$ is a plane tree and $\cA \subset \mathbb{Z}_{+}$, we say that a vertex of $T$ is a $ \cA$-vertex if its outdegree (or number of children) belongs to $ \cA$. 
We define $N^{\cA}(T)$ as the number of $ \cA$-vertices in $T$, and we set $\mu_{\cA} = \sum_{i \in \cA} \mu_i$ to simplify notation. We say that $\cT$ is a $\mu$-GW tree if it is a GW tree with offspring distribution $\mu$. We will always implicitly assume, for the sake of simplicity, that the support of the offspring distribution $\mu$ is non-lattice (a subset $A \subset \Z$ is lattice if there exists $b \in \Z$ and $d \geq 2$ such that $A \subset b + d \Z$), so that for every $n$ sufficiently large a $\mu$-GW tree conditioned on having $n$ vertices is well defined (but all the results carry through to the lattice setting with mild modifications). For $n \geq 1$, we denote by $\cT_n$ a $\mu$-GW tree conditioned to have $n$ vertices.

Let $T$ be a plane tree with $n$ vertices. To define the contour function $(C_{t}(T),0 \leq t \leq 2 n)$ of  $T$, imagine a particle that explores the tree from the left to the right, starting from the root and moving at unit speed along the edges. Then, for $0 \leq t \leq 2(n-1)$, $C_{t}(T)$ is defined as the distance to the root of the position of the particle at time $t$. We set $C_{t}(T)=0$ for $t \in [2(n-1),2n]$ (see Fig.~\ref{fig:exemple} for an example). For every $0 \leq t \leq 1$, let $N_{2nt}^\cA(T)$ be the number of different $\cA$-vertices already visited by  $C(T)$ at time $\lfloor 2nt \rfloor$. In particular, $N_{2n}^\cA(T)=N^\cA(T)$.
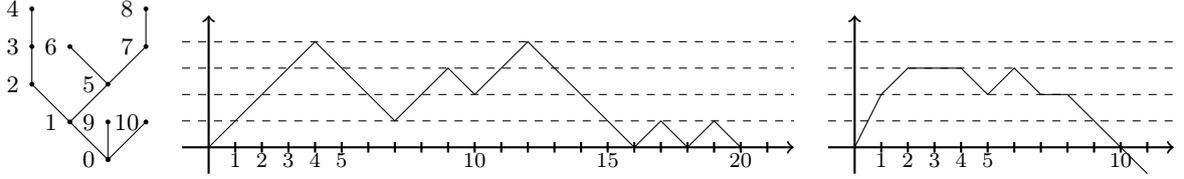
\begin{figure}
\begin{center}
\begin{tabular}{lll}

\begin{tikzpicture}[scale=.5]
\draw (0,1) -- (0,0) -- (1,1) ;
\draw (0,0) -- (-1,1) -- (-2,2) -- (-2,3) -- (-2,4) ;
\draw (-1,1) -- (0,2) -- (-1,3) ;
\draw (0,2) -- (1,3) -- (1,4) ;
 \foreach \Point in {(0,0), (0,1), (1,1), (-1,1), (0,2), (-2,2), (-2,3), (-1,3), (1,3), (1,4), (-2,4), }
        \draw[fill=black] \Point circle (0.05);
\node at (-.5,0) {\footnotesize 0};
\node at (-1.5,1) {\footnotesize 1};
\node at (-.5,1) {\footnotesize 9};
\node at (.5,1) {\footnotesize 10};
\node at (-2.5,2) {\footnotesize 2};
\node at (-.5,2) {\footnotesize 5};
\node at (-2.5,3) {\footnotesize 3};
\node at (-2.5,4) {\footnotesize 4};
\node at (-1.5,3) {\footnotesize 6};
\node at (.5,3) {\footnotesize 7};
\node at (.5,4) {\footnotesize 8};

\end{tikzpicture}
&
\begin{tikzpicture}[scale=.35]
\draw (0,0) -- (1,1) -- (2,2) -- (3,3) -- (4,4) -- (5,3) -- (6,2) -- (7,1) -- (8,2) -- (9,3) -- (10, 2) -- (11,3) -- (12,4) -- (13,3) --(14,2) --(15,1) -- (16,0) -- (17,1) -- (18,0) --(19,1) --(20,0) ;
  \draw[thick,->] (-1,0) -- (22,0);
  \draw[dashed] (-1,1) -- (22,1);
  \draw[dashed] (-1,2) -- (22,2);
  \draw[dashed] (-1,3) -- (22,3);
  \draw[dashed] (-1,4) -- (22,4);
  \draw[thick,->] (0,-1) -- (0,5);
    \draw[thick] (1,.2) -- (1,-.2);
  \draw[thick] (2,.2) -- (2,-.2);
  \draw[thick] (3,.2) -- (3,-.2);
  \draw[thick] (4,.2) -- (4,-.2);
  \draw[thick] (5,.2) -- (5,-.2);
  \draw[thick] (6,.2) -- (6,-.2);
  \draw[thick] (7,.2) -- (7,-.2);
  \draw[thick] (8,.2) -- (8,-.2);
  \draw[thick] (9,.2) -- (9,-.2);
  \draw[thick] (10,.2) -- (10,-.2);
  \draw[thick] (11,.2) -- (11,-.2);
  \draw[thick] (12,.2) -- (12,-.2);
  \draw[thick] (13,.2) -- (13,-.2);
  \draw[thick] (14,.2) -- (14,-.2);
  \draw[thick] (15,.2) -- (15,-.2);
  \draw[thick] (16,.2) -- (16,-.2);
  \draw[thick] (17,.2) -- (17,-.2);
  \draw[thick] (18,.2) -- (18,-.2);
  \draw[thick] (19,.2) -- (19,-.2);
  \draw[thick] (20,.2) -- (20,-.2);
  \draw[thick] (21,.2) -- (21,-.2);
  \node at (1,-.5){\scriptsize 1};
  \node at (2,-.5){\scriptsize 2};
  \node at (3,-.5){\scriptsize 3};
  \node at (4,-.5){\scriptsize 4};
  \node at (5,-.5){\scriptsize 5};
  \node at (10,-.5){\scriptsize 10};
  \node at (15,-.5){\scriptsize 15};
  \node at (20,-.5){\scriptsize 20};
\end{tikzpicture}
&
\begin{tikzpicture}[scale=.35]
  \draw (0,0) -- (1,2) -- (2,3) -- (3,3) -- (4,3) -- (5,2) -- (6,3) -- (7,2) -- (8,2) -- (9,1) -- (10,0) -- (11, -1) ;
  \draw[thick,->] (-1,0) -- (12,0);
  \draw[thick,->] (0,-1) -- (0,5);
  \draw[dashed] (-1,1) -- (12,1);
  \draw[dashed] (-1,2) -- (12,2);
  \draw[dashed] (-1,3) -- (12,3);
  \draw[dashed] (-1,4) -- (12,4);
  \draw[thick] (1,.2) -- (1,-.2);
  \draw[thick] (2,.2) -- (2,-.2);
  \draw[thick] (3,.2) -- (3,-.2);
  \draw[thick] (4,.2) -- (4,-.2);
  \draw[thick] (5,.2) -- (5,-.2);
  \draw[thick] (6,.2) -- (6,-.2);
  \draw[thick] (7,.2) -- (7,-.2);
  \draw[thick] (8,.2) -- (8,-.2);
  \draw[thick] (9,.2) -- (9,-.2);
  \draw[thick] (10,.2) -- (10,-.2);
  \draw[thick] (11,.2) -- (11,-.2);
  \node at (1,-.5){\scriptsize 1};
  \node at (2,-.5){\scriptsize 2};
  \node at (3,-.5){\scriptsize 3};
  \node at (4,-.5){\scriptsize 4};
  \node at (5,-.5){\scriptsize 5};
  \node at (10,-.5){\scriptsize 10};
\end{tikzpicture}
\end{tabular}
\caption{From left to right: a plane tree $T$ with its vertices listed in the depth-first search order, its contour function $C(T)$ and a linear interpolation of its Lukasiewicz path $W(T)$.}
\label{fig:exemple}
\end{center}
\end{figure}

When $\mu$ follows a geometric distribution of parameter $1/2$ (so that $\cT_{n}$ follows the uniform distribution on the set of all plane trees with $n$ vertices) and $ \cA= \{0\}$, Labarbe \& Marckert showed that the convergence
\begin{align*}
\left(  \frac{C_{2nt}(\mathcal{T}_n)}{\sqrt{n}}  ,  \frac{N^{ \{0\} }_{2nt}(\mathcal{T}_n) - nt \mu_0}{\sqrt{n}}  \right)_{0 \leq t \leq 1} 
 \quad \mathop{\longrightarrow}^{(d)}_{n \rightarrow \infty} \quad 
\left( \sqrt{2} \mathbbm{e}_t, B_{t} \right) _{0 \leq t \leq 1}
\end{align*}
holds jointly in distribution in $ \mathcal{C}([0,1],\R^{2})$, where $\mathbbm{e}$ is the normalized Brownian excursion, $B$ is a Brownian motion independent of $\mathbbm{e}$ and $ \mathcal{C}([0,1],\R^{2})$ is the space of continuous $\R^{2}$-valued functions on $[0,1]$ equipped with the uniform topology. 

In words,  the counting process $N^{ \{0\} }(\mathcal{T}_n)$ behaves linearly at the first order, and has centered Brownian fluctuations. Labarbe and Marckert themselves highlight (just after Theorem $4$ in \cite{LM07}) the fact that the fluctuations are centered and do not depend on the final shape of the contour function of the tree, which is quite puzzling. It is therefore natural to wonder if such fluctuations are universal: what happens if the tree is not uniform, if one considers different outdegrees, or if the underlying exploration process is different?

Before stating our result in this direction, we define the second exploration we shall use. If $T$ is a plane tree with $n$ vertices, we denote by $(v_{i}(T))_{0 \leq i \leq n-1}$ the vertices of $T$ ordered in the lexicographical order (also known as the depth-first order).
The Lukasiewicz path $(W_{i}(T))_{0 \leq i \leq n}$ of $T$ is defined by $W_{0}(T)=0$ and $W_{i}(T)-W_{i-1}(T)=k_{v_{i-1}}(T)-1$ for $1 \leq i \leq n$, where $k_{v_{i}}(T)$ denotes the outdegree of $v_{i}$ (see Fig. \ref{fig:exemple} for an example). For $t \in [0,n]$, we set $W_t(T)=W_{\lfloor t \rfloor}(T)$.
For $t \in [0,1]$, we define $K_{nt}^\cA(T)$ as the number of $\cA$-vertices visited by $W(T)$ at time $\lfloor nt \rfloor$ (in other words, $K_{nt}^\cA(T)$ is the number of $\cA$-vertices in the first $\lfloor nt \rfloor$ vertices of $T$ in the lexicographical order). In the next result, convergences hold in distribution in the space $\D([0,1],\R^{2})$ of càdlàg processes on $[0,1]$ equipped with the Skorokhod $J_{1}$ topology (for technical reasons it is  simpler to work with càdlàg processes; see \cite[Chap. VI]{JS03} for background).

\begin{thm}
\label{thm:processus}
Let $\mu$ be a critical distribution with finite variance $\sigma^2>0$ and $\mathcal{T}_n$ be a $\mu$-GW tree conditioned to have exactly $n$ vertices. Let $\cA  \subset \mathbb{Z}_{+}$ be such that $\mu_{\cA}>0$, and set $\gamma_{\cA}= \sqrt{\mu_{\cA} (1-\mu_{\cA}) - \frac{1}{\sigma^2} \left(\sum_{i \in \cA}(i-1)\mu_{i}  \right)^2}$. Then the following assertions hold:
\begin{itemize}
\item[(i)] We have
\begin{align*}
\left( \frac{W_{nt}(\mathcal{T}_n)}{\sqrt{n}} , \frac{K^{\cA}_{nt}(\cT_n)-nt \mu_\cA}{\sqrt{n}} \right) _{0 \leq t \leq 1}  \quad \mathop{\longrightarrow}^{(d)}_{n \rightarrow \infty} \quad 
\left( \sigma \mathbbm{e}_t, \frac{\sum_{i \in \cA} (i-1) \mu_i}{\sigma} \mathbbm{e}_t + \gamma_{\cA} B_t \right) _{0 \leq t \leq 1}
\end{align*}
where $B$ is a standard Brownian motion independent of $\mathbbm{e}$ (see Fig. \ref{fig:exemple2} for a simulation).

\item[(ii)] The following convergence holds in distribution, jointly with that of (i):
\begin{align*}
\left(  \frac{C_{2nt}(\mathcal{T}_n)}{\sqrt{n}}  ,  \frac{N^{\cA}_{2nt}(\mathcal{T}_n) - nt \mu_\cA}{\sqrt{n}}  \right)_{0 \leq t \leq 1} 
 \quad \mathop{\longrightarrow}^{(d)}_{n \rightarrow \infty} \quad 
\left( \frac{2}{\sigma} \mathbbm{e}_t, \frac{\sum_{i \in \cA} i \mu_i}{\sigma} \mathbbm{e}_t + \gamma_\cA {B}_t \right) _{0 \leq t \leq 1}.
\end{align*}
\end{itemize}
\end{thm}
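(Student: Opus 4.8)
The plan is to encode $\cT_n$ by its \L ukasiewicz path, which is the excursion of a centered random walk, and to isolate the Brownian fluctuations of the degree count by an orthogonalisation (regression) trick. Recall that the increments $X_j := k_{v_{j-1}}(\cT_n)-1$ of $W(\cT_n)$ have (before conditioning) law $\P(X=\ell)=\mu_{\ell+1}$, so $\E[X]=0$ and $\mathrm{Var}(X)=\sigma^2$. Writing $Y_j := \mathbf{1}_{k_{v_{j-1}}\in\cA}$ and $c:=\sum_{i\in\cA}(i-1)\mu_i$, I would perform the decomposition $Y_j = \mu_\cA + \tfrac{c}{\sigma^2}X_j + Z_j$, where $Z_j := \mathbf{1}_{k_{v_{j-1}}\in\cA}-\mu_\cA-\tfrac{c}{\sigma^2}X_j$. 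A direct moment computation gives $\E[Z_j]=0$, $\E[X_jZ_j]=c-\tfrac{c}{\sigma^2}\sigma^2=0$, and $\mathrm{Var}(Z_j)=\mu_\cA(1-\mu_\cA)-\tfrac{c^2}{\sigma^2}=\gamma_\cA^2$. Since $\sum_{j\le \lfloor nt\rfloor}X_j = W_{\lfloor nt\rfloor}(\cT_n)$, summing the decomposition yields $K^\cA_{nt}(\cT_n)-nt\mu_\cA = \tfrac{c}{\sigma^2}W_{\lfloor nt\rfloor}(\cT_n)+\sum_{j\le\lfloor nt\rfloor}Z_j + O(1)$. As $W_{\lfloor nt\rfloor}/\sqrt n\to\sigma\e_t$ forces $\tfrac{c}{\sigma^2}W_{\lfloor nt\rfloor}/\sqrt n\to\tfrac{c}{\sigma}\e_t$, part (i) reduces to the \emph{joint} convergence of $\big(W_{nt}(\cT_n)/\sqrt n,\ \tfrac1{\sqrt n}\sum_{j\le\lfloor nt\rfloor}Z_j\big)$ towards $(\sigma\e_t,\ \gamma_\cA B_t)$ with $B$ a standard Brownian motion \emph{independent} of $\e$.

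To establish this joint convergence I would first treat the unconditioned walk: since the increment pair $(X_j,Z_j)$ has diagonal covariance matrix $\mathrm{diag}(\sigma^2,\gamma_\cA^2)$, the two-dimensional Donsker theorem gives convergence of the rescaled bivariate walk to a planar Brownian motion with \emph{independent} coordinates $(\sigma B^{(0)},\gamma_\cA B)$. The remaining task is to carry this through the conditioning $\{S_n=-1,\ S_i\ge 0\ \text{for } i\le n-1\}$. I would first pass to the bridge $\{S_n=-1\}$, under which the increments $(X_j)$, and hence the pairs $(X_j,Z_j)$, are exchangeable; a local limit theorem then turns the first coordinate into a Brownian bridge while, because $Z$ is uncorrelated with $X$, the second coordinate remains an \emph{independent full} Brownian motion (conditioning the value of an asymptotically independent Gaussian coordinate does not constrain the other). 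Finally I would apply the Vervaat cyclic shift at the first argmin $m_n$ of the bridge: this is a measurable functional of the first coordinate alone, it sends the first coordinate to $\sigma\e$, and it cyclically shifts the second-coordinate increments by $m_n$. Since $m_n/n\to U^\ast$ depends only on the first coordinate, and white noise is stationary under shifts by an independent amount, the shifted second coordinate is again a standard Brownian motion that stays independent of $\e$. This is precisely what produces the independent $\gamma_\cA B$ and explains the ``puzzling'' centering phenomenon.

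The main obstacle is exactly this transfer step: making rigorous that conditioning the first coordinate to be an excursion leaves the Brownian fluctuations of the second coordinate undisturbed and independent. The orthogonalisation is what makes this true in principle, but the formal argument requires either a conditioned (local-CLT based) bivariate invariance principle together with continuity and a.s.\ uniqueness of the Vervaat map, or, alternatively, a Kortchemski-type absolute-continuity comparison between the conditioned and free walks on a bulk interval $[\delta n,(1-\delta)n]$ combined with the Markov property at the endpoints. I expect most of the technical weight to sit here; the tightness and finite-dimensional identification are then routine given the diagonal covariance structure.

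For part (ii) I would deduce everything from part (i) via the standard contour/lexicographic time change. The distinct vertices visited by the contour up to time $s$ are exactly the first $V(s)$ vertices in lexicographic order, where the first-visit time of $v_k$ equals $2k-H_k$ (with $H$ the height process); hence $N^\cA_{2nt}(\cT_n)=K^\cA_{V(2nt)}(\cT_n)$ and $V(2nt)-nt=\tfrac12 H_{nt}+O(1)$. Writing $N^\cA_{2nt}-nt\mu_\cA = \big(K^\cA_{V(2nt)}-V(2nt)\mu_\cA\big)+\mu_\cA\big(V(2nt)-nt\big)$ and using the classical convergences $H_{nt}/\sqrt n\to\tfrac2\sigma\e_t$ and $C_{2nt}/\sqrt n\to\tfrac2\sigma\e_t$, the first bracket converges (since $V(2nt)/n\to t$) to the limit of part (i) evaluated at $t$, namely $\tfrac{c}{\sigma}\e_t+\gamma_\cA B_t$, while the second term converges to $\mu_\cA\cdot\tfrac1\sigma\e_t$. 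Adding them gives $\tfrac{c+\mu_\cA}{\sigma}\e_t+\gamma_\cA B_t=\tfrac{\sum_{i\in\cA}i\mu_i}{\sigma}\e_t+\gamma_\cA B_t$, as required, and the joint convergence with (i) is automatic since all the processes involved are continuous functionals of the single path $W(\cT_n)$.
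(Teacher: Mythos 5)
Your proposal follows essentially the same route as the paper: Lukasiewicz encoding, an unconditioned bivariate invariance principle, transfer to the bridge via absolute continuity and a local limit theorem, the Vervaat transform at the argmin, and for (ii) the deterministic relation between the number of distinct vertices visited by the contour and the contour function (your $V(2nt)-nt=\tfrac12 H_{nt}+O(1)$ is, up to the choice of height versus contour process, the paper's Lemma~\ref{lem:relationbC}). The only real difference is presentational: you orthogonalise the indicator increments against the walk increments at the discrete level, which makes the diagonal covariance and the value $\gamma_{\cA}^2$ transparent and lets you invoke bivariate Donsker, whereas the paper reads the same structure off a bivariate local limit theorem for $(S_n,J_n^{\cA})$; the step you correctly flag as carrying the technical weight (pushing the joint convergence through the conditioning) is exactly the content of the paper's Lemma~\ref{lem:pont}, proved by the absolute-continuity argument on $[0,u]$ together with time reversal for tightness near $t=1$, precisely the tools you name.
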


\begin{figure}
\includegraphics[scale=.75]{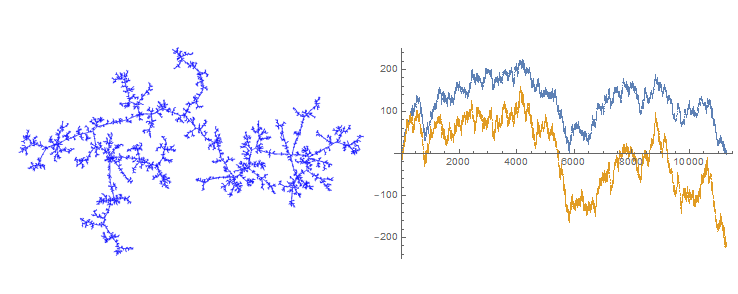}
\caption{ A simulation of a Poisson($1$)-GW tree $\mathcal{T}_n$ with $n=11500$ vertices. Left: an embedding of $\mathcal{T}_n$ in the plane. Right: its Lukasiewicz path together with its renormalized number of $\{1\}$-vertices $( {W_{nt}(\mathcal{T}_n)}/{\sqrt{n}} , {(K^{ \{1\} }_{nt}(\cT_n)-nt \mu_{\{1\}})}/{\sqrt{n}} )_{0 \leq t \leq 1}$. The second one evolves asymptotically as half of the first one plus an independent Brownian motion. }
\label{fig:exemple2}
\end{figure}

As was previously mentioned, assertion (ii) of Theorem \ref{thm:processus}, in the particular case where $ \mathcal{A}= \{0\}$ and $\mu$ is a geometric $1/2$ offspring distribution, was proved by Labarbe \& Marckert \cite{LM07}. It turns out that for leaves, the fluctuations of the counting process $N^{ \{0\} }({\mathcal{T}_n})$ are always centered, irrespective of the offspring distribution. However, the fluctuations are different when one considers other outdegrees or the lexicographical order instead of the contour visit counting process.

Let us briefly comment on the strategy of the proof of Theorem \ref{thm:processus}, which is different from the approach of Labarbe \& Marckert (who rely on explicit formulas for the number of paths with $\pm 1$ steps and various constraints). We start by working with the Lukasiewicz path and establish Theorem \ref{thm:processus} (i) by combining  a general formula giving the joint distribution of outdegrees in GW trees in terms of random walks (Section \ref{sec:jointdistribution}) with absolute continuity arguments and the Vervaat transform.  Theorem \ref{thm:processus} (ii) is then a rather direct consequence of (i) by relating the contour exploration to the depth-first search exploration (see in particular Lemma \ref{lem:relationbC}).

In Section \ref{sec:secgen}, we extend Theorem \ref{thm:processus} (ii)  when we only take into account the $k$-th time we visit a vertex with outdegree $i$ (with $k,i$   integers such that $1 \leq k \leq i+1$). To this end, we give a description of the structure of branches in the tree using  binomial-tail inequalities, which could be of independent interest.

Finally, an extension of this theorem to offspring distributions with infinite variance can be found in Section \ref{ss:ext}.

\paragraph*{Asymptotic normality of the number of vertices with fixed outdegree.} Our next contribution is to extend the joint asymptotic normality of the number of vertices with a fixed outdegree in large conditioned critical GW trees  obtained by Janson \cite{Jan16}, by counting vertices whose outdegree belongs to a fixed subset of $\mathbb{Z}_{+}$ and by allowing a more general conditioning. Indeed, we shall focus on  $\mu$-GW trees conditioned to have $n$ $\cB$-vertices, for a fixed $\cB \subset \Z_+$ (we shall always implicitly restrict ourselves to values of $n$ such that this conditioning makes sense).

\begin{thm}
\label{thm:normality}
Let $\mu$ be a critical offspring distribution with positive finite variance and let $\cA, \cB $ be subsets of $\mathbb{Z}_{+}$ such that $\mu_\cB>0$. For $n \geq 1$, let $\mathcal{T}_n^{\cB}$ be a $\mu$-GW tree conditioned to have $n$ $\cB$-vertices. Then:
\begin{enumerate}
\item[(i)]  as $n \rightarrow \infty$, $\frac{1}{n}\mathbb{E}(N^{\cA}(\mathcal{T}_n^\cB)) \rightarrow \frac{\mu_{\cA}}{\mu_{\cB}}$;
\item[(ii)] there exists $\delta_{\cA,\cB} \geq 0$ such that the convergence
\begin{equation}
\frac{N^{\cA}(\mathcal{T}_n^{\cB})-n \frac{\mu_{\cA}}{\mu_{\cB}}}{\sqrt{n}}  \quad \mathop{\longrightarrow}^{(d)}_{n \rightarrow \infty} \quad  \mathcal{N}(0,\delta_{\cA,\cB}^2)\label{eq:normalite}
\end{equation}
holds in distribution, where $\mathcal{N}(0,\delta_{\cA,\cB}^2)$ is a centered Gaussian random variable with variance $\delta_{\cA,\cB}^2$. In addition, $\delta_{\cA,\cB}=0$ if and only if $\mu_\cA=0$ or $\mu_{\cA \backslash \cB} = \mu_{\cB \backslash \cA} = 0$.
\item[(iii)] the convergences \eqref{eq:normalite} hold jointly for $ \mathcal{A} \subset \Z_{+}$, in the sense that for every $j \geq 1$ and $\cA_1, \cdots, \cA_j \subset \Z_{+}$, $ (({N^{\cA_i}(\mathcal{T}_n^{\cB})-n \frac{\mu_{\cA_i}}{\mu_{\cB}}})/{\sqrt{n}})_{1 \leq i \leq j} $ converges in distribution to a Gaussian vector.
\end{enumerate}
\end{thm}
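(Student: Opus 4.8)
The plan is to reduce the statement, which concerns a tree conditioned on its number of $\cB$-vertices, to a question about sums of i.i.d.\ lattice vectors, and then to run a multivariate local limit theorem summed against the fluctuating total size. Let $\xi_1,\xi_2,\dots$ be i.i.d.\ with law $\mu$ and set, for each $i$,
$$V_i=\bigl(\xi_i-1,\ \mathbf 1_{\{\xi_i\in\cA\}},\ \mathbf 1_{\{\xi_i\in\cB\}}\bigr)\in\Z^3,\qquad (S_m,A_m,B_m):=\sum_{i=1}^m V_i.$$
Since the numbers $N^\cA$ and $N^\cB$ of a plane tree depend only on the (unordered) multiset of its outdegrees, which is invariant under the cyclic shifts of the cycle lemma, Kemperman's hitting-time formula yields, for the unconditioned $\mu$-GW tree of total size $T$,
$$\P\bigl(T=m,\,N^\cB=n,\,N^\cA=a\bigr)=\frac1m\,\P\bigl(S_m=-1,\,B_m=n,\,A_m=a\bigr).$$
Dividing the sum over $m$ of the right-hand side by its sum over $m$ and $a$ expresses $\P(N^\cA=a\mid N^\cB=n)$ purely through local probabilities of the i.i.d.\ walk. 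This reduction is in the spirit of Rizzolo's treatment of the $\cB$-conditioning and of Janson's use of local limit theorems.

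Next I would insert into each term a trivariate local limit theorem for $\sum_i V_i$, whose increment has mean $(0,\mu_\cA,\mu_\cB)$ and covariance $\Gamma$ with $\Gamma_{11}=\sigma^2$, $\Gamma_{1\cA}=\sum_{i\in\cA}(i-1)\mu_i$, $\Gamma_{\cA\cB}=\mu_{\cA\cap\cB}-\mu_\cA\mu_\cB$, and so on. As $B_m\approx m\mu_\cB$, the constraint $B_m=n$ localizes the sum at $m=n/\mu_\cB+O(\sqrt n)$; writing $m=n/\mu_\cB+r$ and rescaling both $r$ and the centred count $a-n\mu_\cA/\mu_\cB$ by $\sqrt n$, the sum over $m$ turns into a one-dimensional Gaussian integral over the rescaled total-size fluctuation. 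The first-coordinate constraint $S_m=-1$ sits at scale $o(\sqrt m)$, so in the limit it amounts to conditioning the Gaussian limit of $V$ on its Lukasiewicz coordinate $\xi-1$ being $0$; this replaces $\Gamma$ by the $2\times2$ conditional covariance $\Sigma$ of $(\mathbf 1_{\{\xi\in\cA\}},\mathbf 1_{\{\xi\in\cB\}})$ given $\xi-1$, whose diagonal entries are exactly the quantities $\gamma_\cA^2$ and $\gamma_\cB^2$ of Theorem \ref{thm:processus}. Integrating out the total-size fluctuation leaves, for the rescaled count, the law of the single linear combination $\widetilde A-\tfrac{\mu_\cA}{\mu_\cB}\widetilde B$ of the conditioned Gaussian pair $(\widetilde A,\widetilde B)\sim\mathcal N(0,\Sigma)$. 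A short computation then gives \eqref{eq:normalite} with
$$\delta_{\cA,\cB}^2=\frac{1}{\mu_\cB}\left(\mathrm{Var}(Z)-\frac{\mathrm{Cov}(Z,\xi)^2}{\sigma^2}\right),\qquad Z:=\mathbf 1_{\{\xi\in\cA\}}-\frac{\mu_\cA}{\mu_\cB}\mathbf 1_{\{\xi\in\cB\}};$$
taking $\cB=\Z_+$ recovers $\gamma_\cA^2$, consistently with Theorem \ref{thm:processus}(i) evaluated at $t=1$.

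This formula makes the degeneracy transparent: $\delta_{\cA,\cB}$ is $\tfrac1{\sqrt{\mu_\cB}}$ times the standard deviation of the residual of $Z$ after linear regression on $\xi$, so $\delta_{\cA,\cB}=0$ exactly when $Z$ is $\mu$-almost surely an affine function of $\xi$, equivalently when $\mathbf 1_{\{\xi\in\cA\}}$ is $\mu$-a.s.\ affine in $(\xi-1,\mathbf 1_{\{\xi\in\cB\}})$. This holds in particular in the two cases $\mu_\cA=0$ and ``$\cA$ and $\cB$ agree $\mu$-almost everywhere'' (i.e.\ $\mu_{\cA\setminus\cB}=\mu_{\cB\setminus\cA}=0$) singled out in the statement, and unwinding the affine relation on the support of $\mu$ pins down all degenerate configurations. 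Part (i) then follows from the convergence in probability $N^\cA/n\to\mu_\cA/\mu_\cB$ (itself a by-product of the concentration of $T$ at $n/\mu_\cB$) upgraded to convergence of expectations by a uniform-integrability bound, using $N^\cA\le T$ and a tail estimate on $T$ supplied by the same local computations. Part (iii) is identical after replacing $V_i$ by the $(j+2)$-dimensional vector $(\xi_i-1,\mathbf 1_{\{\xi_i\in\cA_1\}},\dots,\mathbf 1_{\{\xi_i\in\cA_j\}},\mathbf 1_{\{\xi_i\in\cB\}})$: the limit is automatically Gaussian, with covariances given by the polarised version of the displayed formula.

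The main obstacle I anticipate is quantitative: one must make the local limit theorem uniform enough in $m$ to sum it over the whole relevant range, justify exchanging the sum with the Gaussian approximation, and control the contribution of $m$ far from $n/\mu_\cB$. A related technical point is that $V_i$ need not generate $\Z^3$ (for instance when $\cA$, $\cB$, and the edge balance $\xi-1$ satisfy a linear relation on the support of $\mu$), so one must first pass to the sublattice actually spanned by $V_i$ before invoking the local limit theorem; keeping careful track of this sublattice is also what underlies the degenerate cases in (ii).
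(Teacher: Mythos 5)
Your overall route is the same as the paper's: reduce to an i.i.d.\ walk via the cycle lemma (Proposition \ref{prop:joint}), apply a multivariate local limit theorem (Theorem \ref{thm:multivariatellt} / Lemma \ref{lem:asymp2}), and then decondition on the fluctuating total size. The differences are in the packaging. You work with the single overlapping increment $(\xi-1,\mathds{1}_{\xi\in\cA},\mathds{1}_{\xi\in\cB})$, whereas the paper partitions $\Z_+$ into the four atoms $\cA\cap\cB$, $\cA\setminus\cB$, $\cB\setminus\cA$, $\cA^c\cap\cB^c$ and sums over the value of $N^{\cA\cap\cB}$ (proof of Lemma \ref{lem:normality}); your version is leaner and produces a genuinely explicit variance, $\delta_{\cA,\cB}^2=\mu_\cB^{-1}\bigl(\mathrm{Var}(Z)-\mathrm{Cov}(Z,\xi)^2/\sigma^2\bigr)$ with $Z=\mathds{1}_{\xi\in\cA}-\tfrac{\mu_\cA}{\mu_\cB}\mathds{1}_{\xi\in\cB}$, which correctly reproduces $\gamma_\cA^2$ for $\cB=\Z_+$, $\gamma_\cB^2/\mu_\cB^3$ for $\cA=\Z_+$, and the formulas of Example \ref{ex:example}, while the paper leaves its constants $\sigma_{\cA,\cB}^2,C_{\cA,\cB}$ implicit. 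The price is that the lattice and degeneracy issues you flag are now concentrated in the trivariate LLT; the paper's partition does not escape them either (Lemma \ref{lem:asymp2} assumes positive definiteness, and the main text only treats the case where all four atoms have positive $\mu$-mass), so this is a wash. For the deconditioning you sum the LLT over the total size $m$ directly, which indeed requires the uniformity you identify as the main burden; the paper instead proves a local CLT for the size (Proposition \ref{loool}) and a conditional CLT (Lemma \ref{lem:normality}) separately and glues them with Fatou's lemma plus the Portmanteau theorem precisely to sidestep that interchange. Part (i) is handled the same way in both (concentration of the size plus an exponential tail bound, cf.\ Lemma \ref{lem:estim}), and part (iii) by enlarging the dimension.

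One substantive point. Your formula gives $\delta_{\cA,\cB}=0$ exactly when $Z$ is $\mu$-a.s.\ affine in $\xi$ (equivalently, since $\E Z=0=\E(\xi-1)$, when $Z$ is a.s.\ proportional to $\xi-1$). This is \emph{not} equivalent to the condition ``$\mu_\cA=0$ or $\mu_{\cA\setminus\cB}=\mu_{\cB\setminus\cA}=0$'' asserted in part (ii), so your argument cannot close the ``only if'' direction as stated --- and in fact that direction is false. Take $\mathrm{Supp}(\mu)=\{0,1,2\}$ with $\mu$ critical (which forces $\mu_0=\mu_2$), $\cA=\{2\}$, $\cB=\{0\}$: then $Z=\mathds{1}_{\xi=2}-\mathds{1}_{\xi=0}=\xi-1$ on the support, so $\delta_{\cA,\cB}=0$ even though $\mu_\cA>0$ and $\mu_{\cA\setminus\cB}>0$; indeed $N^{\{0\}}(T)=N^{\{2\}}(T)+1$ deterministically for every such tree, and the paper's own formula in Example \ref{ex:example} for $\cB=\{0\}$, $\cA=\{r\}$ also vanishes here. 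Your residual-variance criterion is therefore the correct characterization of degeneracy; the extra degenerate configurations come from the deterministic identity $\sum_u(k_u-1)=-1$. Apart from this (a defect of the statement rather than of your argument), and modulo the uniformity and sublattice bookkeeping you already identify as needing care, the proposal is sound.
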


As previously mentioned, this extends results of Kolchin \cite{Kol86}, Minami \cite{Min05} and  Janson \cite{Jan16}. The main idea is, roughly speaking, to use a general formula giving the joint distribution of outdegrees in GW trees in terms of random walks of Section \ref{sec:jointdistribution} (which was already used in the proof of Theorem \ref{thm:processus}), combined with various local limit estimates (Section \ref{sec:asymptotic_normality}). As we will see (cf \eqref{eq:NZ}), in the case $ \cA=\mathbb{Z}_{+}$, we have $\delta_{\cA,\cB}^{2}={\gamma^2_{\cB}}/{\mu^3_{\cB}}$ (with $\gamma_{\cB}$ defined as in Theorem \ref{thm:processus} by replacing $\cA$ by $\cB$). Also, the proof of Theorem \ref{thm:normality} (ii) gives a way to compute explicitly $\delta_{\cA,\cB}$ (see Example \ref{ex:example} for the explicit values of the variances and covariances in the cases $\cB=\mathbb{Z}_+$ {and $\cB=\{a\}$ for some $a \in \Z_+$}). See Section \ref{ss:ext} for discussions concerning other offspring distributions.

Our approach, based on a multivariate local limit theorem, applies more generally when $\mu$ is in the domain of attraction of a stable law. In this case, it allows us to prove the convergence of $\cT_n^\cB$ (properly renormalized) towards a Lévy tree, thus generalizing \cite[Theorem $8.1$]{Kor12} which was stated only under the condition that $\cB$ or $\Z_+ \backslash \cB$ is finite. These new results can be found in Section \ref{ss:ext}.

\paragraph*{Acknowledgements.}

I would like to thank Igor Kortchemski for suggesting the study of these exploration processes, as well as for his precious and stimulating comments on the multiple versions of this paper. I would also like to thank the reviewers of this paper, for their numerous constructive remarks and corrections, leading in particular to shorter proofs of Lemma \ref{lem:asymp} (i) and Proposition \ref{prop:ancestors}.

\section{Background on trees and their codings}
\label{background}

We start by recalling some definitions and useful well-known results concerning Galton-Watson trees and their coding by random walks (we refer to \cite{LG05} for details and proofs).

\paragraph{Plane trees.} We first define plane trees using Neveu's formalism \cite{Nev86}. First, let $\N^* = \left\{ 1, 2, \ldots \right\}$ be the set of all positive integers, and $\mathcal{U} = \cup_{n \geq 0} (\N^*)^n$ be the set of finite sequences of positive integers, with $(\N^*)^0 = \{ \emptyset \}$ by convention.
By a slight abuse of notation, for $k \in \Z_+$, we write an element $u$ of $(\N^*)^k$ by $u=u_1 \cdots u_k$, with $u_1, \ldots, u_k \in \N^*$. For $k \in \Z_+$, $u=u_1\cdots u_k \in (\N^*)^k$ and $i \in \Z_+$, we denote by $ui$ the element $u_1 \cdots u_ki \in (\N^*)^{k+1}$ and $iu$ the element $iu_1 \cdots u_k \in (\N^*)^{k+1}$. A tree $T$ is a subset of $\mathcal{U}$ satisfying the following three conditions:
(i) $\emptyset \in T$ (the tree has a root); (ii) if $u=u_1\cdots u_n \in T$, then, for all $k \leq n$, $u_1\cdots u_k \in T$ (these elements are called ancestors of $u$); (iii) for any $u \in T$, there exists a nonnegative integer $k_u(T)$ such that, for every $i \in \N^*$, $ui \in T$ if and only if $1 \leq i \leq k_u(T)$ ($k_u(T)$ will be called the number of children of $u$, or the outdegree of $u$). The elements of $T$ are called the vertices of $T$. The set of all the ancestors of a vertex $u$ will be called the ancestral line of $u$, by analogy with genealogical trees. We denote by $|T|$ the total number of vertices of $T$.

The \textit{lexicographical order} $\prec$ on $\mathcal{U}$ is defined as follows:  $\emptyset \prec u$ for all $u \in \mathcal{U} \backslash \{\emptyset\}$, and for $u,w \neq \emptyset$, if $u=u_1u'$ and $w=w_1w'$ with $u_1, w_1 \in \N^*$, then we write $u \prec w$ if and only if $u_1 < w_1$, or $u_1=w_1$ and $u' \prec w'$.  The lexicographical order on the vertices of a tree $T$ is the restriction of the lexicographical order on $\mathcal{U}$; for every $0 \leq k \leq |T|-1$ we write $v_k(T)$, or $v_k$ when there is no confusion, for the $(k+1)$-th vertex of $T$ in the lexicographical order. Recall from the introduction that the Lukasiewicz path $(W_{i}(T))_{0 \leq i \leq |T|}$ of $T$ is defined by $W_{0}(T)=0$ and $W_{i}(T)-W_{i-1}(T)=k_{v_{i-1}}(T)-1$ for $1 \leq i \leq |T|$.

\paragraph{Galton--Watson trees.} Let $\mu$ be an offspring distribution with mean at most $1$ such that $\mu(0)+\mu(1)<1$ (implicitly, we always make this assumption to avoid degenerate cases). A GW tree $ \mathcal{T}$ with offspring distribution $\mu$ (also called $\mu$-GW tree) is a random variable taking values in the space of all finite plane trees, characterized by the fact that $ \P( \mathcal{T}=T)= \prod_{u \in T} \mu_{k_{u}(T)}$ for every finite plane tree $T$.  We also always implicitly assume that $\gcd(i \in \Z_+, \mu_i > 0) =1$, so that $\P( |\mathcal{T}|=n)>0$ for every $n$ sufficiently large ($\mu$ is said to be aperiodic). All the results can be adapted to the periodic setting with mild modifications.

A key tool to study GW trees is the fact that their Lukasiewicz path is, roughly speaking, a killed random walk, which allows to obtain information on GW trees from the study of random walks. More precisely, let $S$ be the random walk on $\Z_{+} \cup \{-1\}$ starting from $S_0=0$ with jump distribution given by $\P(S_1=i)=\mu_{i+1}$ for $i \geq -1$ (we keep the dependency of $S$ in $\mu$ implicit). The proof of the following lemma can be found in \cite{LG05}.

\begin{lem} 
\label{lem:codage}
 Let $\mu$ be an offspring distribution with mean at most $1$ and $\cT_{n}$ be a $\mu$-GW tree conditioned on having $n$ vertices. Then $(W_{i}(\cT_{n}))_{0 \leq i \leq n}$ has the same distribution as $(S_i)_{0 \leq i \leq n}$ conditionally given the event $\{S_n=-1,\ \forall \ 0 \leq i \leq n-1, S_i \geq 0\}$.
\end{lem}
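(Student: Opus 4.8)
The plan is to set up the classical \L ukasiewicz bijection between plane trees with $n$ vertices and a suitable family of lattice paths, to observe that under this bijection the $\mu$-GW weight of a tree equals the probability that $S$ follows the corresponding path, and then to conclude by a single normalisation. First I would record the two structural properties of the Lukasiewicz path of a plane tree $T$ with $|T|=n$. Summing the increments and using that the total number of children equals the number of edges $n-1$, one gets $W_n(T)=\sum_{i=0}^{n-1}(k_{v_i}(T)-1)=(n-1)-n=-1$. Moreover, interpreting $W_i(T)+1$ as the size of the stack of discovered-but-not-yet-visited vertices after the first $i$ vertices have been visited in lexicographical (depth-first) order, the exploration processes vertex $v_i$ precisely when this stack is nonempty, so that $W_i(T)\ge 0$ for all $0\le i\le n-1$, while $W_n(T)=-1$ is the first time the path reaches $-1$. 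Hence $w:=(W_0(T),\dots,W_n(T))$ belongs to
\[
\mathcal{W}_n=\{(w_0,\dots,w_n):\ w_0=0,\ w_j-w_{j-1}\ge -1,\ w_j\ge 0\ \text{for } j\le n-1,\ w_n=-1\}.
\]
Conversely, reading off the outdegrees $k_{v_{j-1}}=(w_j-w_{j-1})+1$ reconstructs a unique plane tree with $n$ vertices from any $w\in\mathcal{W}_n$, yielding a bijection between $\{T:|T|=n\}$ and $\mathcal{W}_n$. This combinatorial bijection, and in particular the nonnegativity statement together with the fact that $\mathcal{W}_n$ is exactly the image of the set of trees, is the one genuinely non-formal step, which I would establish by induction on $n$ (or invoke from the background in \cite{LG05}).

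Next I would match the weights. By definition of the $\mu$-GW law, for a tree $T$ with $|T|=n$ whose vertices in lexicographical order are $v_0,\dots,v_{n-1}$,
\[
\P(\cT=T)=\prod_{u\in T}\mu_{k_u(T)}=\prod_{j=1}^{n}\mu_{k_{v_{j-1}}(T)}.
\]
On the other hand, writing $\Delta_j=w_j-w_{j-1}$ for the increments of $w=W(T)$ and using that $S$ has i.i.d.\ increments with $\P(S_1=\ell)=\mu_{\ell+1}$,
\[
\P\big((S_0,\dots,S_n)=w\big)=\prod_{j=1}^{n}\P(S_1=\Delta_j)=\prod_{j=1}^{n}\mu_{\Delta_j+1}=\prod_{j=1}^{n}\mu_{k_{v_{j-1}}(T)},
\]
since $\Delta_j+1=k_{v_{j-1}}(T)$. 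Thus every tree with $n$ vertices carries the same mass under the GW measure as its Lukasiewicz path does under the law of $S$.

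Finally I would conclude by conditioning. Since the increments of $S$ are bounded below by $-1$ (the walk is skip-free downwards), the event $E_n=\{S_n=-1,\ S_i\ge 0\ \forall\,0\le i\le n-1\}$ coincides with $\{(S_0,\dots,S_n)\in\mathcal{W}_n\}$, equivalently with the event that the first hitting time of $-1$ by $S$ equals $n$. Summing the weight identity over all trees with $n$ vertices gives $\P(|\cT|=n)=\sum_{w\in\mathcal{W}_n}\P((S_0,\dots,S_n)=w)=\P(E_n)$, so that for each $w\in\mathcal{W}_n$ with associated tree $T_w$,
\[
\P\big(W(\cT_n)=w\big)=\frac{\P(\cT=T_w)}{\P(|\cT|=n)}=\frac{\P((S_0,\dots,S_n)=w)}{\P(E_n)}=\P\big((S_0,\dots,S_n)=w\mid E_n\big),
\]
which is exactly the claimed equality in distribution. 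Everything after the bijection is bookkeeping with the two product formulas and the single normalisation $\P(|\cT|=n)=\P(E_n)$; the main obstacle is therefore the combinatorial bijection of the first step, and especially the nonnegativity of the path up to time $n-1$.
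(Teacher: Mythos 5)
Your argument is correct and is precisely the classical proof that the paper defers to via the citation of \cite{LG05}: the Lukasiewicz bijection between plane trees with $n$ vertices and skip-free-downward paths first hitting $-1$ at time $n$, the term-by-term matching of the $\mu$-GW weight with the path probability under $S$, and the normalisation $\P(|\cT|=n)=\P(E_n)$. Nothing is missing; the stack interpretation correctly justifies the nonnegativity up to time $n-1$, which is indeed the only non-formal step.
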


\paragraph{Several useful ingredients.} We finally gather two very useful ingredients. The first one is a joint scaled convergence in distribution of the contour process (which was defined in the introduction) and the Lukasiewicz path of a critical GW tree with finite variance, conditioned to have $n$ vertices, to the same Brownian excursion.

\begin{thm}[Marckert and Mokkadem \cite{MM03}, Duquesne \cite{Duq03}]
\label{thm:CW}
Let $\mu$ be a critical offspring distribution with finite positive variance $\sigma^2$. Then  the following convergence holds jointly in distribution:  
\begin{align*}
\left( \frac{C_{2nt}(\mathcal{T}_n)}{\sqrt{n}}, \frac{W_{nt}(\cT_{n})}{\sqrt{n}} \right)_{0 \leq t \leq 1} \overset{d}{\longrightarrow} 
\left( \frac{2}{\sigma} \mathbbm{e}_t, \sigma \mathbbm{e}_t \right)_{0 \leq t \leq 1}
\end{align*}
where $\mathbbm{e}$ has the law of the normalized Brownian excursion.
\end{thm}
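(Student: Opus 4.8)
The plan is to prove the two coordinates one at a time and then glue them together through the \emph{height process} of $\cT_n$, defined by letting $H_k(\cT_n)$ be the distance to the root of the $(k+1)$-st vertex $v_k(\cT_n)$ in lexicographical order, with the convention $H_{nt}(\cT_n)=H_{\lfloor nt\rfloor}(\cT_n)$.

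First I would establish the convergence of the Lukasiewicz path. By Lemma \ref{lem:codage}, $(W_i(\cT_n))_{0\le i\le n}$ is distributed as the random walk $(S_i)$ — whose step law has mean $\sum_{i\ge 0}(i-1)\mu_i=0$ by criticality and variance $\sigma^2$ — conditioned on the event $\{S_n=-1,\ S_i\ge 0\ \forall\, 0\le i\le n-1\}$. Donsker's invariance principle gives $(S_{\lfloor nt\rfloor}/\sqrt n)_{t}\to(\sigma B_t)_t$ for a standard Brownian motion $B$, and the bridge obtained by conditioning only on $\{S_n=-1\}$ converges to the Brownian bridge. Applying the Vervaat transform (cyclic shift at the location of the minimum), together with the cycle-lemma estimate $\P(S_i\ge 0\ \forall\, i\le n-1\mid S_n=-1)\asymp 1/n$, one transfers the excursion conditioning and obtains $W_{nt}(\cT_n)/\sqrt n\to\sigma\,\mathbbm{e}_t$ in $\D([0,1],\R)$.

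Next I would pass from $W$ to the height process. The key combinatorial identity is
\[
H_k(\cT_n)=\#\Bigl\{\,0\le j\le k-1 : W_j(\cT_n)=\min_{j\le i\le k}W_i(\cT_n)\,\Bigr\},
\]
which expresses the height as a count of running minima of the Lukasiewicz path. Under the finite variance hypothesis, the fundamental comparison of Marckert--Mokkadem asserts that $\sup_{0\le k\le n}\bigl|H_k(\cT_n)-\tfrac{2}{\sigma^2}W_k(\cT_n)\bigr|=o(\sqrt n)$ in probability; combined with the previous step this yields $H_{nt}(\cT_n)/\sqrt n\to\tfrac{2}{\sigma^2}\cdot\sigma\,\mathbbm{e}_t=\tfrac{2}{\sigma}\mathbbm{e}_t$, jointly with the convergence of $W$.

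Finally I would relate the contour process to the height process via the deterministic time change $\tau_k:=2k-H_k(\cT_n)$, which is exactly the instant at which the contour exploration first reaches $v_k(\cT_n)$, so that $C_{\tau_k}(\cT_n)=H_k(\cT_n)$ and, for $0\le s\le\tau_{k+1}-\tau_k$, one has $|C_{\tau_k+s}(\cT_n)-H_k(\cT_n)|\le|H_{k+1}(\cT_n)-H_k(\cT_n)|+1$. Since $H_k=O(\sqrt n)$ uniformly, $\tau_k=2k+O(\sqrt n)$, so the time change $2nt\leftrightarrow nt$ is asymptotically linear, and a control of the increments $\sup_k|H_{k+1}(\cT_n)-H_k(\cT_n)|=o(\sqrt n)$ gives $\sup_{0\le t\le 1}|C_{2nt}(\cT_n)/\sqrt n-H_{nt}(\cT_n)/\sqrt n|\to 0$ in probability; composing with the uniform continuity of the excursion limit then produces $C_{2nt}(\cT_n)/\sqrt n\to\tfrac{2}{\sigma}\mathbbm{e}_t$ jointly with $W$. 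The main obstacle is the comparison in the second step: the relation $H_k\approx\tfrac{2}{\sigma^2}W_k$ is far from pathwise exact, and showing that the fluctuations of the running-minima count are negligible at scale $\sqrt n$ under only a second-moment assumption is precisely the technical core contributed by Marckert--Mokkadem and Duquesne.
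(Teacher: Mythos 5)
The paper does not actually prove this statement: Theorem \ref{thm:CW} is imported from the literature (Marckert--Mokkadem under an exponential moment assumption, Duquesne in general, with the precise joint form taken from \cite[Theorem 8.1 (II)]{Kor12}), so there is no in-paper proof to compare yours against. Judged on its own, your outline reproduces the standard architecture correctly: Lemma \ref{lem:codage} plus Donsker, the bridge, the cycle lemma and the Vervaat transform for the Lukasiewicz path; the identity $H_k=\#\{0\le j\le k-1: W_j=\min_{j\le i\le k}W_i\}$ for the height process; and the time change $\tau_k=2k-H_k$ with the increment bound $|C_{\tau_k+s}-H_k|\le|H_{k+1}-H_k|+1$ to pass from $H$ to $C$. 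Steps 1 and 3 are sound and complete modulo routine tightness arguments.

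The genuine gap is the one you flag yourself, and it is worth being precise about why it is a gap rather than a citation of a black box. The uniform comparison $\sup_{k\le n}|H_k(\cT_n)-\tfrac{2}{\sigma^2}W_k(\cT_n)|=o(\sqrt n)$ is, given step 1, \emph{equivalent} to the conclusion you are trying to prove, so invoking it as a ``fundamental comparison'' is circular unless it is established independently. Marckert--Mokkadem do prove such a pathwise estimate, but only under a finite exponential moment; under a bare second moment no pathwise comparison of this strength is proved directly. Duquesne's route is genuinely different: one shows the functional convergence of the height process itself, via time reversal of the walk (turning the running-minima count into a count of ladder epochs) together with renewal-type estimates identifying the constant $2/\sigma^2$, and only \emph{a posteriori} does the $o(\sqrt n)$ comparison with $W$ follow from the joint convergence. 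A second, smaller point: both the running-minima identity and the ladder-epoch analysis are naturally stated for the unconditioned walk, and transferring them to the excursion-conditioned setting requires an additional absolute-continuity or Vervaat argument which your sketch does not spell out for the pair $(W,H)$. So the proposal is a correct reduction of the theorem to the known hard estimate, but not a proof of it.
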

This result is due to Marckert and Mokkadem \cite{MM03} under the assumption that $\mu$ has a finite exponential moment. The result in the general case can be deduced from \cite{Duq03}, however it is not clearly stated in this form. See \cite[Theorem $8.1$, (II)]{Kor12} (taking $\mathcal{A}=\Z_+$ in this theorem) for a precise statement.

The second ingredient is the local limit theorem (see \cite[Theorem 4.2.1]{IL71}).

\begin{thm}
\label{llt}
Let $(S_{n})_{n \geq 0}$ be a random walk on $\Z$ such that the law of $S_1$ has finite positive variance $\sigma^2$. Let $h \in \Z_+$ be the maximal integer such that there exists $b \in \Z$ for which $Supp(S_{1}) \subset b + h \Z$. Then, for such $b \in \Z$,
\begin{align*}
\underset{k \in \mathbb{Z}}{\sup} \left| \sqrt{2 \pi \sigma^2 n} \mathbb{P} (S_n=nb+kh) - h \exp \left( -\frac{1}{2} \left( \frac{nb+kh- n \E(S_{1})}{\sigma \sqrt{n}} \right)^2 \right) \right|  \quad \mathop{\longrightarrow}_{n \rightarrow \infty} \quad  0.
\end{align*}
\end{thm}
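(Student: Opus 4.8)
The plan is to establish this by Fourier analysis, reproducing the classical argument of \cite[Theorem 4.2.1]{IL71}. Write $m = \E(S_1)$ and let $\varphi(t) = \E(e^{itS_1})$ be the characteristic function of $S_1$, so that $\psi(t) := e^{-itm}\varphi(t)$ is the characteristic function of the centered variable $S_1 - m$, satisfying $\psi(0)=1$, $\psi'(0)=0$, $\psi''(0)=-\sigma^2$. Since $S_n$ is supported on $nb + h\Z$, the lattice Fourier inversion formula gives, for every $k \in \Z$,
\[
\P(S_n = nb + kh) = \frac{h}{2\pi}\int_{-\pi/h}^{\pi/h}\psi(t)^n\, e^{-it(nb+kh-nm)}\, dt.
\]
On the other hand, the density $g_n$ of the Gaussian law $\mathcal{N}(nm, n\sigma^2)$ satisfies, by Fourier inversion of a Gaussian, $h\, g_n(nb+kh) = \frac{h}{2\pi}\int_{\R} e^{-\sigma^2 n t^2/2}\, e^{-it(nb+kh-nm)}\, dt$, and since $h\,\exp(-(nb+kh-nm)^2/(2\sigma^2 n)) = \sqrt{2\pi\sigma^2 n}\, h\, g_n(nb+kh)$, the statement is equivalent to $\sqrt{2\pi\sigma^2 n}\,\sup_k |\P(S_n=nb+kh) - h\, g_n(nb+kh)| \to 0$.

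Subtracting the two inversion formulas, splitting the Gaussian integral into $\{|t|\le\pi/h\}$ and $\{|t|>\pi/h\}$, and using $|e^{-it(\cdots)}|=1$, the supremum over $k$ is controlled uniformly (this is where the uniformity in $k$ comes for free). I would thereby reduce the claim to showing
\[
\sqrt n \int_{-\pi/h}^{\pi/h} \left| \psi(t)^n - e^{-\sigma^2 n t^2/2}\right| dt \; \mathop{\longrightarrow}_{n\to\infty} \; 0,
\]
together with the harmless tail estimate $\sqrt n \int_{|t|>\pi/h} e^{-\sigma^2 n t^2/2}\,dt \to 0$. I would then split the remaining integral at a small threshold $\delta>0$. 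On the intermediate range $\delta \le |t| \le \pi/h$, the maximality of $h$ forces $|\varphi(t)| = |\psi(t)| < 1$ on this compact set — the only point of $[-\pi/h,\pi/h]$ where $|\varphi|=1$ is $t=0$, since otherwise $S_1$ would be supported on a strictly finer lattice — so by continuity $\sup_{\delta\le|t|\le\pi/h}|\psi(t)| = \rho < 1$. Hence $|\psi(t)^n|\le\rho^n$ and $e^{-\sigma^2 n t^2/2}\le e^{-\sigma^2 n \delta^2/2}$ decay exponentially, and the contribution of this range to $\sqrt n\int$ vanishes.

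On the central range $|t|\le\delta$, I would substitute $t = s/\sqrt n$, turning $\sqrt n \int_{|t|\le\delta}$ into $\int_{|s|\le\delta\sqrt n}|\psi(s/\sqrt n)^n - e^{-\sigma^2 s^2/2}|\,ds$. Finite variance gives the Taylor expansion $\psi(t) = 1 - \tfrac{\sigma^2 t^2}{2} + o(t^2)$ as $t\to 0$, whence $\psi(s/\sqrt n)^n \to e^{-\sigma^2 s^2/2}$ pointwise; moreover, shrinking $\delta$ so that $|\psi(t)|\le e^{-\sigma^2 t^2/4}$ for $|t|\le\delta$, the integrand is dominated by the integrable function $2e^{-\sigma^2 s^2/4}$ on the whole (growing) domain, and dominated convergence yields that this integral tends to $0$, completing the argument. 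The main obstacle is precisely this central range: one must produce a single integrable dominating function valid for all $n$ on a domain expanding to $\R$, using only the finite second moment — exactly the quadratic bound $|\psi(t)|\le e^{-\sigma^2 t^2/4}$ near the origin. The other delicate point is conceptual rather than computational, namely the strict inequality $|\psi(t)|<1$ away from $0$ on $[-\pi/h,\pi/h]$, which is the analytic translation of the hypothesis that $h$ is \emph{maximal}, i.e. that $S_1$ is not concentrated on any coarser lattice.
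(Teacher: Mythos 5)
The paper does not prove this statement: it is quoted as a classical result, citing \cite[Theorem 4.2.1]{IL71}, and your Fourier-inversion argument is precisely the standard proof given there (Gnedenko's local central limit theorem), with the three ranges (Gaussian tail, intermediate range killed by strict inequality $|\psi|<1$ away from $0$ thanks to maximality of $h$, central range handled by the substitution $t=s/\sqrt{n}$ and domination via $|\psi(t)|\le e^{-\sigma^2 t^2/4}$ near $0$) correctly identified and correctly treated. Your proof is correct and coincides with the cited source's approach, so there is nothing to compare beyond that.
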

When $Supp(S_{1})$ is non-lattice,  observe that one can take $b=0$ and $h=1$ in the previous result.

This theorem admits the following generalization in the multivariate setting (see e.g. \cite[Theorem $6.1$]{Rva61}). In the multivariate case in dimension $j \geq 1$, we say that a random variable $\mathbf{Y} \in \Z^j$ is aperiodic if there is no strict sublattice of $\Z^j$ containing the set of differences $\{ \textbf{x}-\textbf{y}, \textbf{x}, \textbf{y} \in \Z^j, \P(\textbf{Y}=\textbf{x})>0, \P(\textbf{Y}=\textbf{y})>0\}$. Furthermore, $\mathcal{S}_j$ denotes the set of symmetric positive definite matrices of dimension $j$. 

\begin{thm}
\label{thm:multivariatellt}
Let $j \geq 1$ and $(\textbf{Y}_i)_{i \geq 1} := ((Y_i^{(1)}, \ldots, Y_i^{(j)}))_{i \geq 1}$ be i.i.d. random variables in $\Z^j$, such that the covariance matrix $\Sigma$ of  $\mathbf{Y}_1$ is positive definite. Assume in addition that $\textbf{Y}_1$ is aperiodic, and denote by $\textbf{M}$ the mean of $\textbf{Y}_1$. Finally, define for $n \geq 1$
\begin{align*}
\mathbf{T_n} = \frac{1}{\sqrt{n}} \left(\sum_{i=1}^n \mathbf{Y}_i - n \mathbf{M} \right) \in \R^j.
\end{align*}
Then, as $n \rightarrow \infty$, uniformly for $\bf{x} \in \R^j$ such that $\P \left( \mathbf{T_n} = \mathbf{x} \right) > 0$,
\begin{align*}
\P \left( \mathbf{T_n} = \mathbf{x} \right) = \frac{1}{(2\pi n)^{j/2} \sqrt{\det \Sigma}} e^{-\frac{1}{2} ^t \bf{x} \Sigma^{-1} \bf{x}} + o\left( n^{-j/2} \right).
\end{align*}
\end{thm}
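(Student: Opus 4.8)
The plan is to prove this by Fourier inversion, following the classical route for local central limit theorems. Since $\mathbf{Y}_1$ takes values in $\Z^j$, its characteristic function $\phi(\mathbf{t}) = \mathbb{E}[e^{i\langle \mathbf{t}, \mathbf{Y}_1\rangle}]$ is $2\pi\Z^j$-periodic, and for every $\mathbf{k}\in\Z^j$ the inversion formula gives $\mathbb{P}(\sum_{i=1}^n\mathbf{Y}_i = \mathbf{k}) = (2\pi)^{-j}\int_{[-\pi,\pi]^j}\phi(\mathbf{t})^n e^{-i\langle\mathbf{t},\mathbf{k}\rangle}\,d\mathbf{t}$. Writing $\psi(\mathbf{t}) = e^{-i\langle\mathbf{t},\mathbf{M}\rangle}\phi(\mathbf{t})$ for the characteristic function of the centered variable $\mathbf{Y}_1 - \mathbf{M}$ and taking $\mathbf{k} = n\mathbf{M} + \sqrt{n}\,\mathbf{x}$ (so that $\{\mathbf{T_n}=\mathbf{x}\}$ is exactly $\{\sum_i\mathbf{Y}_i = \mathbf{k}\}$), the integrand becomes $\psi(\mathbf{t})^n e^{-i\langle\mathbf{t},\sqrt{n}\,\mathbf{x}\rangle}$. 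The substitution $\mathbf{s}=\sqrt{n}\,\mathbf{t}$ then yields
\[
n^{j/2}(2\pi)^j\,\mathbb{P}(\mathbf{T_n}=\mathbf{x}) = \int_{[-\pi\sqrt{n},\pi\sqrt{n}]^j}\psi(\mathbf{s}/\sqrt{n})^n\,e^{-i\langle\mathbf{s},\mathbf{x}\rangle}\,d\mathbf{s}.
\]

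It then suffices to show that the right-hand side converges, uniformly in $\mathbf{x}$, to the Gaussian Fourier integral $\int_{\R^j}e^{-\frac12\,{}^t\mathbf{s}\Sigma\mathbf{s}}e^{-i\langle\mathbf{s},\mathbf{x}\rangle}\,d\mathbf{s} = (2\pi)^{j/2}(\det\Sigma)^{-1/2}e^{-\frac12\,{}^t\mathbf{x}\Sigma^{-1}\mathbf{x}}$; dividing by $n^{j/2}(2\pi)^j$ then gives precisely the claimed expansion with a $o(n^{-j/2})$ error. I would control the integral by splitting $[-\pi,\pi]^j$ (in the variable $\mathbf{t}$) into a neighbourhood $\{|\mathbf{t}|\leq\delta\}$ of the origin and its complement. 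On the inner region, the second-order Taylor expansion $\psi(\mathbf{t}) = 1 - \frac12\,{}^t\mathbf{t}\Sigma\mathbf{t} + o(|\mathbf{t}|^2)$ (valid since $\mathbf{Y}_1-\mathbf{M}$ has mean zero and covariance $\Sigma$) gives, for $\delta$ small enough, the uniform bound $|\psi(\mathbf{t})|\leq e^{-c|\mathbf{t}|^2}$ with $c>0$ fixed (using that $\Sigma$ is positive definite); hence $|\psi(\mathbf{s}/\sqrt{n})^n|\leq e^{-c|\mathbf{s}|^2}$ for $|\mathbf{s}|\leq\delta\sqrt{n}$, a Gaussian domination independent of $n$ that licenses dominated convergence, while $\psi(\mathbf{s}/\sqrt{n})^n\to e^{-\frac12\,{}^t\mathbf{s}\Sigma\mathbf{s}}$ pointwise.

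On the outer region $\{\delta\leq|\mathbf{t}|\}\cap[-\pi,\pi]^j$ the aperiodicity hypothesis is what does the work: since the differences of support points of $\mathbf{Y}_1$ generate all of $\Z^j$, one has $|\phi(\mathbf{t})|=|\psi(\mathbf{t})|<1$ for every $\mathbf{t}\in[-\pi,\pi]^j\setminus\{0\}$, and by continuity and compactness $\rho := \sup\{|\psi(\mathbf{t})| : \delta\leq|\mathbf{t}|,\ \mathbf{t}\in[-\pi,\pi]^j\} < 1$. The contribution of this region to $\mathbb{P}(\mathbf{T_n}=\mathbf{x})$ is therefore at most $\rho^n$ in absolute value, so after multiplication by $n^{j/2}(2\pi)^j$ it is $O(n^{j/2}\rho^n) = o(1)$, uniformly in $\mathbf{x}$ since $|e^{-i\langle\mathbf{s},\mathbf{x}\rangle}|=1$. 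Collecting the two regions and noting that the discarded Gaussian tail $\int_{|\mathbf{s}|>\pi\sqrt{n}}e^{-\frac12\,{}^t\mathbf{s}\Sigma\mathbf{s}}\,d\mathbf{s}$ is also $o(1)$ uniformly in $\mathbf{x}$ yields the convergence, and hence the theorem.

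The main obstacle is precisely this uniform passage to the limit inside an integral whose domain $[-\pi\sqrt{n},\pi\sqrt{n}]^j$ grows with $n$: one must produce an $n$-independent integrable dominating function near the origin (where the non-degeneracy of $\Sigma$ and the quadratic Taylor estimate are essential) and simultaneously rule out any spurious contribution from frequencies bounded away from $0$, which is exactly the analytic content of the aperiodicity assumption through the strict bound $\rho<1$. Uniformity in $\mathbf{x}$ comes for free, as $\mathbf{x}$ enters only through the unimodular factor $e^{-i\langle\mathbf{s},\mathbf{x}\rangle}$ and so affects neither the domination nor the tail estimates.
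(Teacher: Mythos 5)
Your argument is sound, but there is nothing in the paper to compare it against: Theorem \ref{thm:multivariatellt} is quoted as a known result with a pointer to Rva\v{c}eva's Theorem 6.1, and no proof is given in the text. What you have written is the standard characteristic-function proof of the multivariate local limit theorem, and it is essentially the argument of the cited reference. All the key steps are in place and correct: the inversion formula on the torus $[-\pi,\pi]^j$ is legitimate because the variables are $\Z^j$-valued; the identification of $\{\mathbf{T_n}=\mathbf{x}\}$ with $\{\sum_i \mathbf{Y}_i = n\mathbf{M}+\sqrt{n}\,\mathbf{x}\}$ is exactly where the restriction to $\mathbf{x}$ with $\P(\mathbf{T_n}=\mathbf{x})>0$ (equivalently, $n\mathbf{M}+\sqrt{n}\,\mathbf{x}\in\Z^j$) enters; the quadratic Taylor bound $|\psi(\mathbf{t})|\leq e^{-c|\mathbf{t}|^2}$ near the origin uses precisely the positive definiteness of $\Sigma$ and supplies the $n$-independent dominating function; and the deduction of $\rho<1$ on the outer region from aperiodicity is correct (if $|\phi(\mathbf{t})|=1$ then $\{\mathbf{v}\in\Z^j:\langle\mathbf{t},\mathbf{v}\rangle\in2\pi\Z\}$ is a sublattice containing all differences of support points, hence all of $\Z^j$, forcing $\mathbf{t}\in2\pi\Z^j$, and $2\pi\Z^j\cap[-\pi,\pi]^j=\{\mathbf{0}\}$). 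Your observation that uniformity in $\mathbf{x}$ is free because $\mathbf{x}$ only appears through a unimodular factor is also right: the error is bounded by $\int_{|\mathbf{s}|\leq\delta\sqrt{n}}|\psi(\mathbf{s}/\sqrt{n})^n - e^{-\frac{1}{2}{}^t\mathbf{s}\Sigma\mathbf{s}}|\,d\mathbf{s}$ plus tail terms, none of which involve $\mathbf{x}$. The only slip is cosmetic: the Gaussian tail you must discard after dominated convergence on $\{|\mathbf{s}|\leq\delta\sqrt{n}\}$ is $\int_{|\mathbf{s}|>\delta\sqrt{n}}e^{-\frac{1}{2}{}^t\mathbf{s}\Sigma\mathbf{s}}\,d\mathbf{s}$, not $\int_{|\mathbf{s}|>\pi\sqrt{n}}$, since the annulus $\delta\sqrt{n}\leq|\mathbf{s}|\leq\pi\sqrt{n}$ is already handled by your $O(n^{j/2}\rho^n)$ estimate; both tails are $o(1)$, so nothing changes.
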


This theorem can easily be adapted when $\mathbf{Y}_1$ is not aperiodic. However, for convenience, we shall restrict ourselves to this case in what follows.

\section{Joint distribution of outdegrees in GW trees}
\label{sec:jointdistribution}

The first steps of the proofs of Theorems \ref{thm:processus} and \ref{thm:normality} both reformulate events on trees in terms of events on random walks, whose probabilities are easier to estimate. In this direction, in this section, we give a general formula for the joint distribution of outdegrees in GW trees in terms of random walks (Proposition \ref{prop:joint}) and establish technical estimates (Lemma \ref{lem:asymp}) which will be later used several times.

\subsection{A joint distribution}
\label{ssec:jointdistribution}

The following proposition is a key tool in the study of the outdegrees in a $\mu$-GW tree $\cT$, as it allows to study the joint distribution of $(N^{\Z_+}(\cT), N^{\cB}(\cT))$:

\begin{prop}
\label{prop:joint}
Let $\cB \subset \Z_+$. Let $(S_i)_{i \geq 0}$ be a random walk starting from $0$, whose jumps are independent and distributed according to $\mu(\cdot + 1)$, and let $(J_i^\cB)_{i \geq 0}$ be the walk starting from $0$ such that, for all $i \geq 0$, $J_{i+1}^\cB-J_i^\cB=\mathds{1}_{S_{i+1}-S_i+1 \in \cB}$. Then, for every $n \geq 1$ and $k \geq 0$,
\begin{align*}
\mathbb{P} \left( N^{\Z_+}(\cT)=n, N^{\cB}(\cT)=k \right)= \frac{1}{n} \mathbb{P}\left(S_n=-1,J_n^\cB=k\right).
\end{align*}
\end{prop}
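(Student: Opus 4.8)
The plan is to use a bijective argument, the cycle lemma (also known as the Kemperman or Dvoretzky–Motzkin lemma), which is the standard mechanism linking excursion-type constraints for random walks to a factor of $1/n$. First I would recall the content of Lemma \ref{lem:codage}: a $\mu$-GW tree $\cT$ conditioned to have $n$ vertices has a Lukasiewicz path distributed as $(S_i)_{0 \le i \le n}$ conditioned on the event $\{S_n = -1, \ S_i \ge 0 \ \text{for all } 0 \le i \le n-1\}$. Unpacking this, I would write
\begin{align*}
\mathbb{P}\left(N^{\Z_+}(\cT) = n, \ N^{\cB}(\cT) = k\right) = \mathbb{P}\left(S_n = -1, \ S_i \ge 0 \ \forall i \le n-1, \ J_n^\cB = k\right),
\end{align*}
where the key observation is that the increment $S_{i+1} - S_i + 1 = k_{v_i}(\cT)$ is exactly the outdegree of the $i$-th vertex in lexicographical order, so that counting $\cB$-vertices among the first $n$ vertices is precisely recording the number of times the increment lands in $\cB$, i.e.\ $J_n^\cB$. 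Thus $N^\cB(\cT) = k$ translates verbatim into $J_n^\cB = k$, and the vertex-count condition $N^{\Z_+}(\cT) = n$ is built into the conditioning on $|\cT| = n$.

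The core of the argument is then to remove the positivity constraint at the cost of the factor $1/n$. I would invoke the cycle lemma: for any fixed path $(s_0, s_1, \ldots, s_n)$ with increments in $\Z_+ \cup \{-1\}$, starting at $s_0 = 0$ and ending at $s_n = -1$, there is exactly one cyclic shift of the increment sequence that produces a path staying nonnegative until the final step. The crucial point is that both the terminal value $S_n = -1$ and the statistic $J_n^\cB = \sum_{i=1}^n \mathds{1}_{S_i - S_{i-1} + 1 \in \cB}$ depend only on the multiset of increments, hence are invariant under cyclic permutation; and the random walk law, being i.i.d., is also invariant under cyclic shifts of the increments. Therefore, grouping the $n$ cyclic shifts of each increment sequence into equivalence classes and using that exactly one shift per class satisfies the excursion condition, I would obtain
\begin{align*}
\mathbb{P}\left(S_n = -1, \ S_i \ge 0 \ \forall i \le n-1, \ J_n^\cB = k\right) = \frac{1}{n}\, \mathbb{P}\left(S_n = -1, \ J_n^\cB = k\right),
\end{align*}
which is the claimed identity.

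The step requiring the most care is the application of the cycle lemma jointly with the conditioning on $J_n^\cB = k$: I must make sure the cyclic-shift-invariance argument applies to the pair $(S_n, J_n^\cB)$ and not merely to $S_n$ alone. The clean way to handle this is to condition on the entire ordered increment sequence $(x_1, \ldots, x_n)$ with $\sum x_i = -1$ (each $x_i \in \Z_+ \cup \{-1\}$), observe that all $n$ cyclic shifts have identical probability under the i.i.d.\ law and identical values of both $S_n = \sum_i x_i$ and $J_n^\cB = \sum_i \mathds{1}_{x_i + 1 \in \cB}$, and note that the cycle lemma guarantees precisely one shift in each orbit satisfies the strict positivity up to time $n-1$. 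A minor subtlety worth a sentence is the treatment of orbits of size smaller than $n$ (when the increment sequence has a nontrivial period), but since the total negative drift is exactly $-1$ and $\gcd$ of the support is $1$, such degenerate orbits contribute consistently and the cycle lemma still yields exactly one valid shift per length-$n$ sequence of shifts, so the factor $1/n$ is exact. Summing over all admissible increment sequences with $J_n^\cB = k$ then gives the result.
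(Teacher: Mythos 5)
Your proof is correct and follows exactly the paper's route: the paper also deduces the identity from the cyclic lemma, observing that both $S_n=-1$ and $J_n^{\cB}=k$ are functions of the multiset of increments and hence invariant under cyclic shifts, so that exactly one of the $n$ shifts realizes the excursion constraint. Your side remark about periodic orbits is in fact vacuous here: since the increments sum to $-1$, no increment sequence can have a period $d$ with $1<d\le n$ dividing $n$, so all $n$ cyclic shifts are automatically distinct.
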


This proposition is a consequence of the so-called cyclic lemma, which is responsible for the factor $1/n$ (see e.g. \cite[Equation (2)]{Kor12}): roughly speaking, let $(S_i)_{0 \leq i \leq n}$ be a walk starting from $0$ and reaching $-1$ at time $n$. Then, among all $n$ cyclic shifts of $(S_i)_{0 \leq i \leq n}$, exactly one of them takes only nonnegative values between times $0$ and $n-1$.

The following asymptotics, which can be derived from a local limit theorem (see e.~g.~\cite{Riz15} or \cite[Theorem 8.1]{Kor12})  will be useful throughout the paper:
\begin{equation}
\mathbb{P}(N^{\cB}(\cT)=k) \underset{k \rightarrow \infty}{\sim} \frac{1}{\sqrt{2 \pi \sigma^2}} \sqrt{\mu_{\cB}} k^{-3/2},\label{eq:NB}
\end{equation}
assuming that  $\mathbb{P}(N^{\cB}(\cT)=k) >0$ for $k$ sufficiently large.

\subsection{A technical estimate}
\label{ssec:norm}

We introduce other probability measures as follows:

\begin{defi}
Let $ \mathcal{C} \subset \mathbb{Z}_{+}$ be a subset such that $\mu_{\mathcal{C}}>0$. For $i \in \mathbb{Z}$, we set
\begin{align*}
  p_{\mathcal{C}}(i)=
  \begin{cases}
    \frac{\mu_{i+1}}{\mu_{\mathcal{C}}}  & \text{if }  i+1 \in \mathcal{C}\\
    0 & \text{otherwise}. \\
  \end{cases}
\end{align*}
We let $m_{\mathcal{C}}$ be the expectation of $p_{\mathcal{C}}$ and $\sigma^{2}_{\mathcal{C}}$ be its variance.
\end{defi}

The following identities will be useful.

\begin{lem}
\label{relation}
Assume that $\mu$ is critical and has finite positive variance $\sigma^{2}$. Let $\cB \subset \Z_+$ be such that $\mu_\cB>0$ and $\mu_{\cB^c} > 0$. Let $\gamma_{\cB} \geq 0$ be such that $\gamma_{\cB}^{2}= \mu_{\cB} (1-\mu_{\cB}) - \frac{1}{\sigma^2} \left(\sum_{i \in \cB}(i-1)\mu_{i}  \right)^2$.
Then the following identities hold:
\begin{itemize}
\item[(i)] $m_{\cB^c} (1-\mu_{\cB}) + m_{\cB} \mu_{\cB} = 0$,
\item[(ii)] $\gamma_\cB^2 =\mu_\cB (1-\mu_\cB) - \frac{1}{\sigma^2} \mu_\cB^2 m_\cB^2$,
\item[(iii)] $\mu_\cB \sigma_\cB^2 + (1-\mu_\cB) \sigma_{\cB^c}^2 = \frac{\sigma^{2} }{\mu_\cB (1-\mu_\cB)} \gamma_\cB^2$.
\end{itemize}
\end{lem}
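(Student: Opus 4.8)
The plan is to reduce all three identities to elementary moment computations, relying twice on the criticality of $\mu$: once through the vanishing first centered moment $\sum_{i \in \Z_+}(i-1)\mu_i = 0$, and once through $\sum_{i \in \Z_+}(i-1)^2\mu_i = \sigma^2$ (which holds since the mean is $1$).

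First I would set up convenient shorthand. For a subset $\mathcal{C} \subset \Z_+$ with $\mu_\mathcal{C}>0$, write $s_\mathcal{C} = \sum_{i \in \mathcal{C}}(i-1)\mu_i$ and $q_\mathcal{C} = \sum_{i \in \mathcal{C}}(i-1)^2\mu_i$. Unwinding the definition of $p_\mathcal{C}$ (a probability measure supported on $\mathcal{C}-1$, with $p_\mathcal{C}(i) = \mu_{i+1}/\mu_\mathcal{C}$ when $i+1 \in \mathcal{C}$), a direct computation gives $m_\mathcal{C} = s_\mathcal{C}/\mu_\mathcal{C}$, equivalently $\mu_\mathcal{C} m_\mathcal{C} = s_\mathcal{C}$, and likewise $\sigma^2_\mathcal{C} = q_\mathcal{C}/\mu_\mathcal{C} - m_\mathcal{C}^2$, so that $\mu_\mathcal{C}\sigma^2_\mathcal{C} = q_\mathcal{C} - s_\mathcal{C}^2/\mu_\mathcal{C}$. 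Since $\mu$ is a probability measure and $\cB^c = \Z_+ \setminus \cB$, I also record $\mu_{\cB^c} = 1-\mu_\cB$.

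For (i), I would note that criticality gives $s_\cB + s_{\cB^c} = \sum_{i \in \Z_+}(i-1)\mu_i = \sum_i i\mu_i - 1 = 0$. Rewriting the left-hand side of (i) as $m_{\cB^c}\mu_{\cB^c} + m_\cB\mu_\cB = s_{\cB^c} + s_\cB$ then yields the claim; in particular $s_{\cB^c} = -s_\cB$, a fact I reuse below. Identity (ii) is then immediate: from $s_\cB = \mu_\cB m_\cB$ we get $s_\cB^2 = \mu_\cB^2 m_\cB^2$, and substituting into the defining expression $\gamma_\cB^2 = \mu_\cB(1-\mu_\cB) - \tfrac{1}{\sigma^2}s_\cB^2$ gives exactly (ii).

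The only genuine computation is (iii). Applying $\mu_\mathcal{C}\sigma^2_\mathcal{C} = q_\mathcal{C} - s_\mathcal{C}^2/\mu_\mathcal{C}$ to $\mathcal{C}=\cB$ and $\mathcal{C}=\cB^c$, the left-hand side of (iii) equals $(q_\cB + q_{\cB^c}) - s_\cB^2/\mu_\cB - s_{\cB^c}^2/\mu_{\cB^c}$. Here $q_\cB + q_{\cB^c} = \sum_{i\in\Z_+}(i-1)^2\mu_i = \sigma^2$ by criticality, and using $s_{\cB^c}^2 = s_\cB^2$ from (i) together with $1/\mu_\cB + 1/\mu_{\cB^c} = 1/(\mu_\cB\mu_{\cB^c})$, the two remaining terms collapse to $s_\cB^2/(\mu_\cB(1-\mu_\cB))$. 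Thus the left-hand side is $\sigma^2 - s_\cB^2/(\mu_\cB(1-\mu_\cB))$. Expanding the right-hand side with the definition of $\gamma_\cB^2$ produces the identical expression, which closes the proof. There is no real obstacle beyond bookkeeping; the only points requiring care are the passage between the distribution $p_\mathcal{C}$ on $\mathcal{C}-1$ and the raw moments $s_\mathcal{C},q_\mathcal{C}$, and keeping track of where each of the two criticality inputs is used.
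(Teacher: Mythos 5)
Your proof is correct and follows essentially the same route as the paper, which computes (i) by unwinding the definition of $p_{\cB}$ and $p_{\cB^c}$ and using criticality, notes that (ii) is immediate from $\mu_\cB m_\cB = \sum_{i\in\cB}(i-1)\mu_i$, and leaves (iii) to the reader as a similar computation. You have simply carried out the (iii) bookkeeping explicitly, correctly using $\sum_{i}(i-1)^2\mu_i=\sigma^2$ and $s_{\cB^c}=-s_\cB$.
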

In particular, observe that $\gamma_{B}$ is well-defined by (iii). Furthermore, if $\# Supp(\mu) \geq 3$, then at least one of the variances $\sigma^2_{\cB}$ and $\sigma^2_{\cB^c}$ is positive, which implies by (iii) that $\gamma_\cB>0$.

\begin{proof}
For (i), simply write that the quantity $m_{\cB^c} (1-\mu_{\cB}) + m_{\cB} \mu_{\cB}$ is equal to
\begin{align*}
 (1-\mu_{\cB}) \sum_{i \geq -1} i p_{\cB^c}(i) + \mu_{\cB} \sum_{i \geq -1} i p_{\cB}(i)&= (1-\mu_{\cB}) \sum_{i+1 \notin \cB} \frac{i \mu_{i+1}}{1-\mu_{\cB}} + \mu_{\cB} \sum_{i+1 \in \cB} \frac{i \mu_{i+1}}{\mu_{\cB}}\\
&= \sum_{i \notin \cB} (i-1) \mu_i + \sum_{i \in \cB} (i-1) \mu_i,
\end{align*}
which is equal to $0$ since $\mu$ is critical.   The second assertion is clear, while the proof of the last one is similar to the first one and is left to the reader.
\end{proof}

Let us keep the notation of Proposition \ref{prop:joint}. In particular, recall that the walk $(J_i^\cB)_{i \geq 0}$ is defined from $(S_i)_{i \geq 0}$ as $J_0^\cB=0$ and, for $i \geq 0$, $J_{i+1}^\cB-J_i^\cB=\mathds{1}_{S_{i+1}-S_i+1 \in \cB}$. 

We set, for $c \in \R$,
\[k_{n}(c)=\lfloor \mu_{\cB} n + c \sqrt{n} \rfloor.\]

The following estimate will play an important role.

\begin{lem}
\label{lem:asymp}
Let $\mu$ be an aperiodic critical offspring distribution with positive finite variance $\sigma^{2}$ such that $\# Supp(\mu) \geq 3$, and let $\cB \subset \Z_+$ be such that $\mu_\cB>0$ and $\mu_{\cB^c} > 0$. Assume in addition that $p_\cB$ or $p_{\cB^c}$ is aperiodic. Fix $a \in \R$ and let $(a_{n})$ be a sequence of integers such that $a_{n}/\sqrt{n} \underset{n \rightarrow \infty}{\rightarrow} a$.
Then the following assertions hold as $n \rightarrow \infty$, uniformly for $c$ in a compact subset of $\R$:
\begin{itemize}
\item[(i)] $\displaystyle \P\left( S_n=a_n, J_n^\cB=k_n(c) \right) \sim \frac{1}{n} \frac{1}{{2 \pi \sigma  \gamma_\cB}}  e^{  - \frac{1}{2 \sigma^{2}} a^{2} - \frac{1}{2\gamma_{\cB}^{2} } \left( c- \frac{\mu_{\cB} m_{\cB}}{\sigma^{2}}  a\right)^{2}}$,
\item[(ii)] $\P \left( N^{\Z_+}(\cT)=n, N^\cB(\cT)=k_n(c) \right) \sim \displaystyle\frac{1}{n^2} \, \frac{1}{2\pi \sigma \gamma_\cB} e^{- \frac{c^2}{2 \gamma_\cB^2}}$.
\end{itemize}
\end{lem}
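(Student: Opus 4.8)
The plan is to prove (i) by a two-dimensional local limit theorem applied to the joint walk $(S_i, J_i^\cB)_{i \ge 0}$, and then deduce (ii) as an immediate corollary via Proposition \ref{prop:joint}.

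For part (i), first I would observe that the increments $\mathbf{Y}_i := (S_i - S_{i-1}, J_i^\cB - J_{i-1}^\cB)$ are i.i.d.\ $\Z^2$-valued random variables: the first coordinate has law $\mu(\cdot+1)$ and the second is the indicator $\mathds{1}_{S_i-S_{i-1}+1 \in \cB}$, which is a deterministic function of the first coordinate. Hence $\mathbf{Y}_1$ takes values of the form $(i-1, \mathds{1}_{i \in \cB})$ with probability $\mu_i$. The mean is $\mathbf{M} = (0, \mu_\cB)$ (using criticality for the first coordinate). I would then compute the covariance matrix $\Sigma$ of $\mathbf{Y}_1$: the $(1,1)$ entry is $\sigma^2$; the $(2,2)$ entry is $\mathrm{Var}(\mathds{1}_{i\in\cB}) = \mu_\cB(1-\mu_\cB)$; and the off-diagonal entry is $\mathrm{Cov}(i-1, \mathds{1}_{i\in\cB}) = \sum_{i\in\cB}(i-1)\mu_i =: \mu_\cB m_\cB$ (the last identity following from Lemma \ref{relation}(i), which rewrites $\sum_{i\in\cB}(i-1)\mu_i = \mu_\cB m_\cB$). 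A direct computation then gives $\det\Sigma = \sigma^2 \mu_\cB(1-\mu_\cB) - (\sum_{i\in\cB}(i-1)\mu_i)^2 = \sigma^2\gamma_\cB^2$, using the definition of $\gamma_\cB$.

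Next I would apply the multivariate local limit theorem, Theorem \ref{thm:multivariatellt}, with $j=2$. This requires $\Sigma$ to be positive definite, which holds because $\gamma_\cB > 0$ under the stated hypotheses (as noted after Lemma \ref{relation}, since $\#\mathrm{Supp}(\mu)\ge 3$), and aperiodicity of $\mathbf{Y}_1$ in $\Z^2$, which I would check follows from the aperiodicity of $\mu$ together with the assumption that $p_\cB$ or $p_{\cB^c}$ is aperiodic. Writing $\mathbf{x}_n = \frac{1}{\sqrt n}(a_n, k_n(c) - n\mu_\cB)$, the event $\{S_n = a_n, J_n^\cB = k_n(c)\}$ is exactly $\{\mathbf{T}_n = \mathbf{x}_n\}$, and since $a_n/\sqrt n \to a$ and $(k_n(c)-n\mu_\cB)/\sqrt n \to c$, we have $\mathbf{x}_n \to (a,c)$. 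Theorem \ref{thm:multivariatellt} then yields
\[
\P(S_n = a_n, J_n^\cB = k_n(c)) = \frac{1}{2\pi n\sqrt{\det\Sigma}}\, e^{-\frac12\,{}^t\mathbf{x}_n \Sigma^{-1}\mathbf{x}_n} + o(n^{-1}).
\]
The main task is then to invert $\Sigma$ and simplify the quadratic form ${}^t(a,c)\,\Sigma^{-1}(a,c)$ into the completed-square expression $\frac{1}{\sigma^2}a^2 + \frac{1}{\gamma_\cB^2}(c - \frac{\mu_\cB m_\cB}{\sigma^2}a)^2$. Substituting $\det\Sigma = \sigma^2\gamma_\cB^2$ into the prefactor gives $\frac{1}{2\pi\sigma\gamma_\cB}$, matching the claimed form. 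The uniformity over $c$ in a compact set, and the fact that the $o(n^{-1})$ error is negligible relative to the main term, follow from the uniform statement of Theorem \ref{thm:multivariatellt} together with continuity of the Gaussian density, since the main term is bounded away from $0$ for $c$ in a compact set and $a$ fixed.

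For part (ii), I would simply apply Proposition \ref{prop:joint} with the choice $a_n = -1$ (so $a = 0$), giving
\[
\P(N^{\Z_+}(\cT) = n, N^\cB(\cT) = k_n(c)) = \frac{1}{n}\,\P(S_n = -1, J_n^\cB = k_n(c)).
\]
Plugging in part (i) with $a=0$ collapses the exponent to $-\frac{c^2}{2\gamma_\cB^2}$ (the cross term vanishes) and multiplies the $\frac{1}{n}$ asymptotic by the external $\frac1n$, producing the claimed $\frac{1}{n^2}\frac{1}{2\pi\sigma\gamma_\cB}e^{-c^2/(2\gamma_\cB^2)}$.

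The step I expect to be the most delicate is verifying the aperiodicity of the $\Z^2$-valued increment $\mathbf{Y}_1$ from the one-dimensional aperiodicity hypotheses on $\mu$, $p_\cB$, or $p_{\cB^c}$; the degeneracy here is genuine, since the second coordinate is a function of the first, so one must argue carefully that the support of the differences of $\mathbf{Y}_1$ is not contained in any strict sublattice of $\Z^2$. The algebraic simplification of the quadratic form is routine but must be carried out with care to match the stated constants.
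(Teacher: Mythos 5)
Your proposal is correct and follows essentially the same route as the paper: apply the bivariate local limit theorem (Theorem \ref{thm:multivariatellt}) to the increments of $(S,J^\cB)$, compute the mean $(0,\mu_\cB)$ and covariance matrix with $\det\Sigma=\sigma^2\gamma_\cB^2$ via Lemma \ref{relation}(ii), simplify the quadratic form, and deduce (ii) from Proposition \ref{prop:joint} with $a_n=-1$. The only quibble is that the identity $\sum_{i\in\cB}(i-1)\mu_i=\mu_\cB m_\cB$ is just the definition of $m_\cB$ rather than a consequence of Lemma \ref{relation}(i), which is immaterial.
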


Observe that (ii) is a straightforward consequence of (i) and Proposition \ref{prop:joint}. (i) itself follows from the multivariate local limit theorem \ref{thm:multivariatellt}:

\begin{proof}[Proof of Lemma \ref{lem:asymp} (i)]

The idea is to apply Theorem \ref{thm:multivariatellt} to a sequence of i.i.d. variables in $Z^2$, distributed as $\mathbf{Y}_1 := (S_1, J_1^\cB)$. Since $p_\cB$ or $p_{\cB^c}$ is aperiodic, $\mathbf{Y}_1$ (as a $2$-dimensional variable) is aperiodic as well. Furthermore, the mean and the covariance matrix of $\mathbf{Y}_1$ are respectively equal to:
\begin{align*}
M= \begin{pmatrix}
0 \\ \mu_\cB
\end{pmatrix} \qquad \text{ and } \qquad 
\Sigma = \begin{pmatrix}
\sigma^2 & \mu_\cB m_\cB \\
\mu_\cB m_\cB & \mu_\cB(1-\mu_\cB).
\end{pmatrix}
\end{align*}
where $\sigma^2$ is the variance of $\mu$. In particular, $\det \Sigma = \sigma^2 \gamma_\cB^2 > 0$.

On the other hand, as $\mu$ is non-lattice, for $n$ large enough, uniformly for $c$ in a compact subset of $\R$, $\P( S_n=a_n, J_n^\cB=k_n(c))>0$.
An easy computation, with the help of Lemma \ref{relation} (ii), gives the result that we want.
\end{proof}

\section{Evolution of outdegrees in an exploration of a Galton-Watson tree}
\label{evo}

The aim of this section is to establish Theorem \ref{thm:processus}. Recall from the introduction that if $T$ is a tree and $\cA \subset \Z_{+}$, $C(T)$ denotes the contour function of $T$, for $0 \leq t \leq 1$, $N_{2nt}^\cA(T)$ denotes the number of different $\cA$-vertices already visited by  $C(T)$ at time $\lfloor 2nt \rfloor$ and $K_{nt}^\cA(T)$ denotes the number of $\cA$-vertices in the first $\lfloor nt \rfloor$ vertices of $T$ in the depth-first search (or, equivalently, the lexicographical order).

We assume here that $\cA \subset \mathbb{Z}_{+}$ is such that $\mu_\cA > 0$. We keep the notation of Section \ref{ssec:jointdistribution}, and denote in particular by $m_{\cA}$ the expectation of a random variable with law given by $p_{\cA}(i)= \frac{\mu_{i+1}}{\mu_{\cA}}\mathds{1}_{i+1\in \cA}$ for $i \in \Z$.

\subsection{Depth-first exploration}
\label{proc1}

In this section, we study the evolution of the number of $\cA$-vertices in conditioned GW trees for the depth-first search, and establish in particular Theorem \ref{thm:processus} (i). Throughout this section, we fix a critical distribution $\mu$ with finite positive variance $\sigma^2$, and we let $\mathcal{T}_n$ denote a $\mu$-GW tree conditioned on having $n$ vertices.

The idea of the proof of Theorem \ref{thm:processus} (i) is the following.  By Lemma \ref{lem:codage}, the convergence of  Theorem \ref{thm:processus} (i)  can be restated  in terms of the random walk
$(S_i)_{0 \leq i \leq n}$ (with jump distribution given by $\P(S_1=i)=\mu(i+1)$ for $i \geq -1$) conditionally given the event $\{S_n=-1,\ \forall \ 0 \leq i \leq n-1, S_i \geq 0\}$. We first establish a result for the ``bridge'' version where one works conditionally given the event $\{S_n=-1\}$ (Lemma \ref{lem:pont}) and then conclude by using the so-called Vervaat transform.

To simplify notation, for every $t \geq 0$, we set $S_{t}=S_{\lfloor t \rfloor }$ and ${J}^{\cA}_{t}= \sum_{k=1}^{\lfloor t \rfloor } \mathds{1}_{ \{S_{k}-S_{k-1} + 1 \in \mathcal{A}\} }$

\begin{lem}
\label{lem:pont}
The following convergence holds in distribution
\begin{equation}
\label{eq:pont}
\left( \frac{S_{ nt }}{\sqrt{n}}, \frac{ J_{nt}^{\mathcal {A}} -  \mu_{\cA} nt}{\sqrt{n}}\right)_{0 \leq t \leq 1}  \quad \textrm{under} \quad  \P( \, \cdot \, | S_{n}=-1)  \quad \mathop{\longrightarrow}^{(d)}_{n \rightarrow \infty} \quad  \left( \sigma B^{br}_t, \frac{\mu_{\cA} m_{\cA}}{\sigma} B^{br}_t + \gamma_{\cA} B'_t \right)_{0 \leq t \leq 1}
\end{equation}
where $B^{br}$ is a standard Brownian bridge and $B'$ is a standard Brownian motion independent of $B^{br}$.
\end{lem}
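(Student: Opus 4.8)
The plan is to deduce the statement from a two-dimensional functional central limit theorem for the i.i.d. increments $\mathbf{Y}_i := (S_i - S_{i-1},\, \mathds{1}_{\{S_i - S_{i-1}+1 \in \cA\}})$, and then to transfer the convergence from the \emph{free} walk to its bridge. First I would record that, since $\mu$ is critical, $\E[\mathbf{Y}_1]=(0,\mu_\cA)$ and, exactly as in the proof of Lemma~\ref{lem:asymp}~(i) (with $\cB$ replaced by $\cA$), the covariance matrix of $\mathbf{Y}_1$ is
\[
\Sigma = \begin{pmatrix} \sigma^2 & \mu_{\cA} m_{\cA} \\ \mu_{\cA} m_{\cA} & \mu_{\cA}(1-\mu_{\cA}) \end{pmatrix},
\]
the off-diagonal entry being $\sum_{i \in \cA}(i-1)\mu_i = \mu_{\cA} m_{\cA}$. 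By Donsker's invariance principle in $\R^2$, the unconditioned process $(\tfrac{S_{nt}}{\sqrt n}, \tfrac{J^{\cA}_{nt} - \mu_{\cA}nt}{\sqrt n})_{0\leq t \leq 1}$ converges in distribution to a centred two-dimensional Brownian motion $(\sigma B_t, W_t)$ with covariance $\Sigma$ per unit time.

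Next I would isolate the independent part of the limit. Setting $B'_t := (W_t - \tfrac{\mu_{\cA}m_{\cA}}{\sigma}B_t)/\gamma_{\cA}$ (and declaring $B'$ an arbitrary independent Brownian motion if $\gamma_{\cA}=0$), a direct covariance computation relying on Lemma~\ref{relation}~(ii) shows that $B'$ is a standard Brownian motion independent of $B$ and that $W_t = \tfrac{\mu_{\cA}m_{\cA}}{\sigma}B_t + \gamma_{\cA}B'_t$; this is the Gaussian fact that zero cross-covariance yields independence. Equivalently, the process $\tilde J_i := J^{\cA}_i - \mu_{\cA} i - \tfrac{\mu_{\cA}m_{\cA}}{\sigma^2}S_i$ has increments of variance $\gamma_{\cA}^2$ uncorrelated with those of $S$, and $\tfrac{\tilde J_{nt}}{\sqrt n}\to\gamma_\cA B'_t$ jointly with, and asymptotically independently of, $\tfrac{S_{nt}}{\sqrt n}$. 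Thus the free limit is precisely the right-hand side of \eqref{eq:pont} with $B^{br}$ replaced by $B$.

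The core step is the passage to the bridge $\P(\cdot \mid S_n = -1)$. Fix $s \in (0,1)$. By the Markov property and the stationarity of the increments of $S$, the law of $(\tfrac{S_{nt}}{\sqrt n}, \tfrac{J^{\cA}_{nt} - \mu_{\cA}nt}{\sqrt n})_{0 \leq t \leq s}$ under $\P(\cdot\mid S_n=-1)$ is absolutely continuous with respect to its law under $\P$, with Radon--Nikodym density
\[
D_n = \frac{\P(S_{n - \lfloor ns\rfloor} = -1 - S_{\lfloor ns\rfloor})}{\P(S_n = -1)},
\]
which depends on the trajectory only through $S_{\lfloor ns\rfloor}$, that is, through the first coordinate. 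The local limit theorem (Theorem~\ref{llt}) gives, on the event $S_{\lfloor ns\rfloor}/\sqrt n \to x$, that $D_n \to \tfrac{1}{\sqrt{1-s}}\exp(-\tfrac{x^2}{2\sigma^2(1-s)})$, which is exactly the density of the bridge $\sigma B^{br}$ with respect to $\sigma B$ on $[0,s]$. Since this density is a function of the first coordinate alone, reweighting the free limit $(\sigma B,\, \tfrac{\mu_{\cA}m_{\cA}}{\sigma}B + \gamma_{\cA}B')$ by it turns $B$ into a Brownian bridge $B^{br}$ while leaving the independent Brownian motion $B'$ untouched. Combined with the free convergence and a uniform integrability argument for $D_n$, this yields the convergence in \eqref{eq:pont} on $[0,s]$ for every $s<1$.

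It then remains to upgrade convergence on $[0,s]$, $s<1$, to convergence on all of $[0,1]$, which I expect to be the main obstacle, since the density $D_n$ degenerates as $s\to 1$. It suffices to establish tightness in $\D([0,1],\R^2)$ under the bridge measure together with control of the increments of $J^{\cA}$ on $[1-\delta,1]$: the first coordinate is already controlled by the known bridge convergence of $\tfrac{S_{nt}}{\sqrt n}$ under $\P(\cdot\mid S_n=-1)$ to $\sigma B^{br}$, and for the second one I would either invoke a maximal inequality to show that $\sup_{1-\delta\leq t\leq 1}|J^{\cA}_{nt}-J^{\cA}_{n(1-\delta)}-\mu_{\cA}(nt - n(1-\delta))|/\sqrt n$ is small in probability, uniformly in $n$, or use a time-reversal of the bridge to transport the convergence near $t=1$ to convergence near $t=0$ (taking care that reversing the increments of $S$ replaces $\cA$ by a reflected set in the indicator defining $J$). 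Either route, combined with the finite-dimensional identification from the previous step, identifies the limit as $(\sigma B^{br},\, \tfrac{\mu_{\cA}m_{\cA}}{\sigma}B^{br} + \gamma_{\cA}B')$ and completes the proof.
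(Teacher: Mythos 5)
Your proposal is correct and follows essentially the same route as the paper: an unconditioned functional CLT for the pair $(S,J^{\cA})$ with covariance matrix as you compute it, an absolute-continuity reweighting on $[0,s]$ whose density depends only on $S_{\lfloor ns\rfloor}$ and converges via the local limit theorem to the bridge density, and time reversal to control the process near $t=1$ (the paper obtains the free convergence from the bivariate local limit estimate of Lemma~\ref{lem:asymp}~(i) together with the one-dimensional-marginal criterion of \cite[Theorem 16.14]{Kal02} rather than from a multivariate Donsker theorem, but this is immaterial). One small clarification: your caveat about a ``reflected set'' in the time reversal is unnecessary, since $\widehat{S}_i-\widehat{S}_{i-1}=S_{n-i+1}-S_{n-i}$ merely permutes the increments without negating them, so $(\widehat{S},\widehat{J}^{\cA})$ has exactly the law of $(S,J^{\cA})$ with the same set $\cA$ --- which is precisely the identity the paper uses to transport tightness from $[0,u]$ to $[u,1]$.
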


\begin{proof}
We first check that the corresponding nonconditioned statement holds, namely that  the following convergence holds in distribution :
\begin{equation}
\label{eq:noncond}
\left( \frac{S_{ nt }}{\sqrt{n}}, \frac{J_{nt}^{\cA} - \mu_{\cA} nt}{\sqrt{n}} \right)_{0 \leq t \leq 1}   \quad \mathop{\longrightarrow}^{(d)}_{n \rightarrow \infty} \quad  \left( \sigma B_t, \frac{\mu_{\cA} m_{\cA}}{\sigma} B_t + \gamma_{\cA} B'_t \right)_{0 \leq t \leq 1}
\end{equation}
where $B$ is a standard Brownian motion and $B'$ a standard Brownian motion independent of $B$. To this end, by \cite[Theorem 16.14]{Kal02}, it is enough to check that the one-dimensional convergence holds for $t=1$. By Lemma \ref{lem:asymp} (i), uniformly for  $a, b$ in a compact subset of $\R$:
\[\P(S_{n}= \lfloor a \sqrt{n} \rfloor, J^{\cA}_{n}=\lfloor \mu_{\cA} n+b \sqrt{n}\rfloor)  \quad \mathop{\sim}_{n \rightarrow \infty} \quad   \frac{1}{{2 \pi \sigma  \gamma_\cA}} \frac{1}{n}  e^{  - \frac{1}{2 \sigma^{2}} a^{2} - \frac{1}{2\gamma_{\cA}^{2} } \left( b- \frac{\mu_{\cA} m_{\cA}}{\sigma^{2}}  a\right)^{2}}.\]
It is standard (see e.g.~\cite[Theorem 7.8]{Bil68}) that this implies that $({S_{ n }}/{\sqrt{n}}, ({J_{n}^{\cA} - \mu_{\cA} n})/{\sqrt{n}})$ converges in distribution to 
$( \sigma B_1, \frac{\mu_{\cA} m_{\cA}}{\sigma} B_1+ \gamma_{\cA} B'_1 )$, which yields \eqref{eq:noncond}.

We now establish \eqref{eq:pont} by using an absolute continuity argument. We fix $u \in (0,1)$, a bounded continuous functional $F: \mathbb{D}([0,u],\R^{2}) \rightarrow \R$, and to simplify notation set $A_{n}=\E \left[F ( {S_{ nt }}/{\sqrt{n}}, ({ J_{nt}^{\mathcal {A}} -  \mu_{\cA} nt})/{\sqrt{n}})_{0 \leq t \leq u}  | S_{n}=-1 \right] $. Then, setting $\phi_{n}(i)=\P(S_{n}=i)$, we have
\[A_{n}= \E \left[F\left(\left( \frac{S_{ nt }}{\sqrt{n}}, \frac{ J_{nt}^{\mathcal {A}} -  \mu_{\cA} nt}{\sqrt{n}}\right)_{0 \leq t \leq u}\right) \frac{\phi_{n-\lfloor nu \rfloor}(-S_{\lfloor nu \rfloor}-1)}{\phi_{n}(-1)}\right].\]
 An application of the local limit theorem \ref{llt} allows to write as $n \rightarrow \infty$
\[A_{n} = \E \left[F\left(\left( \frac{S_{ nt }}{\sqrt{n}}, \frac{ J_{nt}^{\mathcal {A}} -  \mu_{\cA} nt}{\sqrt{n}}\right)_{0 \leq t \leq u}\right) \frac{q_{1-u}(-S_{\lfloor nu \rfloor}/\sqrt{n})}{q_{1}(0)}\right]+o(1),
\]
where $q_t$ denotes the density of a centered Brownian motion of variance $\sigma^2$ at time $t$. Therefore, by \eqref{eq:noncond}, as $n \rightarrow \infty$,
\begin{eqnarray}
A_{n} & \displaystyle  \mathop{\longrightarrow}_{n \rightarrow \infty}& \E\left[ F \left(  \left( \sigma B_t, \frac{\mu_{\cA} m_{\cA}}{\sigma} B_t + \gamma_{\cA} B'_t  \right)_{0 \leq t \leq u} \right)\frac{q_{1-u}(-B_u)}{q_1(0)} \right] \notag\\
& =&   \displaystyle\E\left[ F \left(  \left( \sigma B^{br}_t, \frac{\mu_{\cA} m_{\cA}}{\sigma} B^{br}_t + \gamma_{\cA} B'_t  \right)_{0 \leq t \leq u} \right) \right],\label{eq:0u}
\end{eqnarray}
where the last identity follows from standard absolute continuity properties of the Brownian bridge (see e.g.~\cite[Chapter XII]{RY99}).

The convergence \eqref{eq:0u} shows in particular that, conditionally given $S_{n}=-1$, the process $( {S_{ nt }}/{\sqrt{n}}, ({ J_{nt}^{\mathcal {A}} -  \mu_{\cA} nt})/{\sqrt{n}})_{0 \leq t \leq 1} $ is tight on $[0,u]$ for every $u \in (0,1)$. To check that it is tight on $[u,1]$, it suffices to check that for $u \in (0,1)$, $( {S_{ n-nt }}/{\sqrt{n}}, ({ J_{n-nt}^{\mathcal {A}} -  \mu_{\cA} n(1-t)})/{\sqrt{n}})_{0 \leq t \leq u}$ is tight  conditionally given $S_{n}=-1$. To this end, notice that by time-reversal  the process $(\widehat{S}_{i},\widehat{J}_{i})_{0 \leq i \leq n}:=(S_{n}-S_{n-i},J^{\cA}_{n}-J^{\cA}_{n-i})_{0 \leq i \leq n}$ has the same distribution as $(S_{i},J_{i})_{0 \leq i \leq n}$ (and this also holds conditionally given $S_{n}=-1$). Then write
\[\left( \frac{S_{ n-nt }}{\sqrt{n}}, \frac{ J_{n-nt}^{\mathcal {A}} -  \mu_{\cA} n(1-t)}{\sqrt{n}}\right)_{0 \leq t \leq u}= \left( \frac{\widehat{S}_{n}-\widehat{S}_{nt}}{\sqrt{n}}, \frac{\widehat{J}^{\cA}_{n}- \mu_{\cA}n}{\sqrt{n}}- \frac{\widehat{J}^{\cA}_{nt}-\mu_{\cA}nt}{\sqrt{n}}  \right)_{0 \leq t \leq u}.\]
Now, by Lemma \ref{lem:asymp} (i) and the local limit theorem, uniformly for $b$ in a compact subset of $\R$, 
$\P(J^{\cA}_{n}=\lfloor \mu_{\cA} n+b \sqrt{n}\rfloor |S_{n}=-1)\sim    \frac{1}{ \sqrt{{2 \pi  \gamma_\cA n}}}   e^{  - {b^{2}}/{2\gamma_{\cA}^{2}}}$ as $n \rightarrow \infty$, which shows that, conditionally given $S_{n}=-1$,  $(J_{n}^{\cA}- \mu_{\cA}n)/\sqrt{n}$ converges in distribution. Hence by  \eqref{eq:0u},  the process  $( {S_{ nt }}/{\sqrt{n}}, ({ J_{nt}^{\mathcal {A}} -  \mu_{\cA} nt})/{\sqrt{n}})_{u \leq t \leq 1} $ is tight on $[u,1]$  conditionally given $S_{n}=-1$.  This allows us to conclude that this process is actually tight on $[0,1]$, and in addition, this identifies the convergence of the finite dimensional marginal distributions.  \end{proof}

In order to deduce Theorem \ref{thm:processus} (i) from the bridge version of Lemma \ref{lem:pont}, we now use the Vervaat transformation, whose definition is recalled here.

Set $\D_0([0,1],\R)= \{ \omega \in \D([0,1],\R) ; \, \omega(0)=0\}$.  For every $\omega \in \D_0([0,1],\R)$ and $0 \leq u \leq 1$, we define the shifted function $\omega^{(u)}$ by
\[\omega^{(u)}(t)=\begin{cases} \omega(u+t) -\omega(u)
 \qquad \qquad  \qquad \qquad \quad \, \, \, \, \, \textrm{if }
u+t \leq 1,
\\
\omega(u+t-1)+\omega(1)- \omega(u) \qquad \quad
\qquad \textrm{  if } u+t \geq 1.
\end{cases}
\]
We shall also need the notation $g_1(\omega)=\inf\{ t \in [0,1]; \omega(t-)
\wedge \omega(t)= \inf_{[0,1]} \omega\}$. The shifted function $\omega^{(g_{1}(\omega))}$ is usually called the Vervaat transform of $\omega$.

\begin{lem}
\label{lem:shift}
Let $B^{br}$ be a standard Brownian bridge and $B$ an independent standard Brownian motion. Set $\tau=g_{1}(B^{br})$. Then
\begin{align*}
\left( B^{br,(\tau)}, B^{(\tau)} \right)   \quad \mathop{=}^{(d)} \quad \left(\mathbbm{e}, B' \right),
\end{align*}
where $\mathbbm{e}$ is a normalized Brownian excursion and  $B'$ is a standard Brownian motion independent of $\mathbbm{e}$.
\end{lem}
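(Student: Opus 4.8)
The plan is to derive this joint identity from the classical Vervaat theorem together with an independence argument exploiting the crucial fact that $\tau$ is a measurable functional of $B^{br}$ alone. The classical Vervaat theorem already supplies the marginal identity $B^{br,(\tau)} \mathop{=}^{(d)} \mathbbm{e}$ (recall that $\tau=g_{1}(B^{br})$ is almost surely the unique location of the minimum of the bridge). The genuine content of the lemma is therefore that the shifted motion $B^{(\tau)}$ remains a standard Brownian motion \emph{and} that it decouples from $\mathbbm{e}$, even though it is cyclically shifted at the random time $\tau$ dictated by the bridge.

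The first step is the purely deterministic observation that for every \emph{fixed} $u \in [0,1]$, the cyclic shift $B^{(u)}$ of a standard Brownian motion $B$ is again a standard Brownian motion. This follows directly from the definition of $\omega^{(u)}$: on $[0,1-u]$ the increments of $B^{(u)}$ are those of $B$ over $[u,1]$, while on $[1-u,1]$, setting $s=t-(1-u)$, the increments equal $B(s)$, i.e. those of $B$ over $[0,u]$; by the independent-increments property of Brownian motion these two pieces are independent standard Brownian motions on their respective intervals (note that $B(1)=0$ is \emph{not} needed here), and their concatenation is a standard Brownian motion on $[0,1]$.

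The second step uses the independence of $B$ and $B^{br}$. Conditioning on $B^{br}$ fixes the value of $\tau=g_{1}(B^{br})$ while leaving $B$ a standard Brownian motion; hence, by the first step, the conditional law of $B^{(\tau)}$ given $B^{br}$ is the Wiener measure, and in particular it does \emph{not} depend on $B^{br}$. It follows that $B^{(\tau)}$ is a standard Brownian motion which is independent of $B^{br}$, and therefore independent of $B^{br,(\tau)}$ since the latter is a deterministic functional of $B^{br}$. Combining with the Vervaat identity $B^{br,(\tau)} \mathop{=}^{(d)} \mathbbm{e}$ yields
\[
\left( B^{br,(\tau)}, B^{(\tau)} \right) \mathop{=}^{(d)} \left( \mathbbm{e}, B' \right),
\]
with $B'$ a standard Brownian motion independent of $\mathbbm{e}$.

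The only delicate point is the conditioning argument of the second step: $B^{(\tau)}$ depends on $B$ both directly and through the shift parameter $\tau$, so it is essential that $\tau$ be a functional of $B^{br}$ (hence independent of $B$). This is precisely what makes the conditional law of $B^{(\tau)}$ given $B^{br}$ constant in $B^{br}$ and thereby severs its dependence on $\mathbbm{e}$; had $\tau$ instead been the argmin of $B$ itself, no such decoupling would hold.
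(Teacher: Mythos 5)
Your proof is correct and follows essentially the same route as the paper: the paper's argument is precisely that, since $B$ and $B^{br}$ are independent and $\tau$ is a functional of $B^{br}$ alone, $B^{(\tau)}$ is a standard Brownian motion independent of $(\tau,B^{br})$, hence of $B^{br,(\tau)}$, and one concludes with Vervaat's theorem. You merely spell out the ``readily follows'' step (the deterministic fact that a cyclic shift at a fixed time preserves Wiener measure, plus the conditioning on $B^{br}$), which is a welcome elaboration but not a different proof.
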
 

\begin{proof}
Since $B$ and $B^{br}$ are independent, it readily follows that $B^{(\tau)}$  has the law of a standard Brownian motion, and is independent of $(\tau,B^{br})$, and therefore is independent of $B^{br, (\tau)}$. On the other hand, $B^{br, (\tau)}$ has the law of $\mathbbm{e}$ (see e.g.~\cite{Ver79}). The result follows.
\end{proof}

\begin{proof}[Proof of Theorem \ref{thm:processus} (i)]
We keep the notation of Lemma \ref{lem:shift}, and we also let $(S^{br,n},J^{n})=(S^{br,n}_{nt},J^{n}_{nt})_{0 \leq t \leq 1}$ be a random variable distributed as $(S_{nt},J_{nt}^{\cA} - nt\mu_{\cA})_{0 \leq t \leq 1}$ conditionally given $S_{n}=-1$. We set $\tau_{n}=g_{1}(S^{br,n})$. It is well-known (see e.g.~\cite{Ver79}) that $S^{br,n,(\tau_{n})}$ has the same distribution as $(W_{nt}(\cT_{n}))_{0 \leq t \leq 1}$. It follows that
\[ \left( S^{br,n,(\tau_{n})}, J^{n,(\tau_{n})}\right)  \quad \mathop{=}^{(d)} \quad \left( W_{nt}(\cT_{n}), K^{\cA}_{nt}(\cT_n)-nt \mu_\cA \right)_{0 \leq  t \leq 1}.\]
Since $B^{br}$ and $B$ are almost surely continuous at $\tau$, by Lemma \ref{lem:pont} and standard continuity properties of the Vervaat transform, it follows that
\[ \left( \frac{W_{nt}(\cT_{n})}{\sqrt{n}} , \frac{K^{\cA}_{nt}(\cT_n)-nt \mu_\cA}{\sqrt{n}} \right) _{0 \leq t \leq 1}   \quad \mathop{\longrightarrow}^{(d)}_{n \rightarrow \infty} \quad \left( \sigma B^{br,(\tau)}_t, \frac{\mu_{\cA} m_{\cA}}{\sigma} B^{br,(\tau)}_t + \gamma_{\cA} B'^{(\tau)}_t \right)_{0 \leq t \leq 1}.\]
By Lemma \ref{lem:shift}, this last process has the same distribution as $( \sigma \mathbbm{e}_t, \frac{\sum_{i \in \cA} (i-1) \mu_i}{\sigma} \mathbbm{e}_t + \gamma_{\cA} B'_t)$, and this completes the proof.
\end{proof}

\subsection{Contour exploration}
\label{proc2}

We are now interested in the evolution of the number of $\cA$-vertices in conditioned GW trees for the contour visit counting process, and establish in particular Theorem \ref{thm:processus} (ii).
The idea of the proof is to obtain a relation between the counting process $N^\cA$ for the contour process and the counting process $K^{\cA}$ for the depth-first search order.

In this direction, if $T$ is a tree with $n$ vertices,  for every $0 \leq k \leq 2n-2$, we denote by $b_{k}(T)$ the number of \textit{different} vertices visited by the contour process  $C(T)$ up to time $k$. We set $b_{k}(T)=b_{2n-2}(T)$ for $k \geq 2n-2$, and $b_{t}(T)=b_{\lfloor t \rfloor}(T)$ for $t \geq 0$. It turns out that the following simple deterministic relation holds between $b(T)$ and $C(T)$.

\begin{lem}
\label{lem:relationbC}
Let $T$ be a tree with $n$ vertices. Then, for every $0 \leq k \leq 2n-2$,
\begin{align*}
b_{k}(T) =  1 + \frac{k+C_{k}(T)}{2}.
\end{align*}
\end{lem}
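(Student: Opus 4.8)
The plan is to read off the identity directly from the increments of the contour walk. Since the exploring particle moves at unit speed along the edges, between two consecutive integer times it traverses exactly one edge; hence for every $0 \leq k \leq 2n-3$ one has $C_{k+1}(T)-C_k(T) \in \{+1,-1\}$, the value $+1$ occurring when the particle moves away from the root (down an edge towards a child) and $-1$ when it moves back towards the root (up an edge towards a parent). I would call the former \emph{up-steps} and the latter \emph{down-steps}, and denote by $U_k$ (resp.\ $D_k$) the number of up-steps (resp.\ down-steps) among the first $k$ steps, so that $U_k + D_k = k$.

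First I would establish a bijection between up-steps and first visits of vertices. In the contour exploration each of the $n-1$ edges of $T$ is traversed exactly twice, once downwards and once upwards, and the downward traversal of the edge joining a non-root vertex to its parent is precisely the instant at which that vertex is visited for the first time. Consequently a new vertex is discovered at time $j+1$ if and only if the $j$-th step is an up-step, and distinct up-steps discover distinct vertices. Taking into account the root, which is visited at time $0$, this gives the key relation
\[
b_k(T) = 1 + U_k, \qquad 0 \leq k \leq 2n-2.
\]

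It then remains to express $U_k$ in terms of $C_k(T)$. Since $C_0(T)=0$ and each up-step (resp.\ down-step) changes the value of $C$ by $+1$ (resp.\ $-1$), one has $C_k(T) = U_k - D_k$. Combined with $U_k + D_k = k$, this yields $U_k = (k + C_k(T))/2$, and substituting into the previous display gives $b_k(T) = 1 + (k+C_k(T))/2$, as claimed. As a consistency check, at $k = 2n-2$ the particle is back at the root so $C_{2n-2}(T)=0$, and the formula returns $b_{2n-2}(T)=n$, in agreement with all $n$ vertices having been visited.

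The only genuinely substantive point is the bijection of the second paragraph: one must argue carefully that each non-root vertex corresponds to exactly one up-step (its unique parent edge being traversed downwards exactly once) and that no up-step lands on an already-visited vertex. Everything else reduces to the elementary counting identities $C_k(T)=U_k-D_k$ and $U_k + D_k = k$, so I expect the verification of this structural property of the contour traversal to be the main (though mild) obstacle.
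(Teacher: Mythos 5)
Your proof is correct and rests on exactly the same observation as the paper's, namely that a vertex is visited for the first time at time $k+1$ if and only if the contour step from $k$ to $k+1$ is an up-step; the paper phrases this as an induction on $k$, while you sum the increments to get $b_k(T)=1+U_k$ and $C_k(T)=U_k-D_k$, which is the same argument in closed form.
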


\begin{proof}
We show that the result holds for $k=0$, and that if it holds at time $0 \leq k \leq 2n-3$ then it holds at time $k+1$. For $0 \leq k \leq 2n-2$, let $u_k$ be the vertex visited by the contour process at time $k$. First, at time $k=0$, the root is the only vertex visited and $b_{0}(T) = 1$. Now assume that the result holds until time $0 \leq k \leq 2n-3$. Then we see that $u_{k+1}$ is visited for the first time at time $k+1$ if and only if the contour process goes up between $u_k$ and $u_{k+1}$.
Therefore, $b_{k+1}(T) = b_{k}(T) + 1$ if $C_{k+1}(T) = C_{k}(T)+1$ and $b_{k+1}(T) = b_{k}(T)$ if $C_{k+1}(T) = C_{k}(T)-1$. In both cases, the formula is also valid at time $k+1$.
\end{proof}

We are now in position to establish Theorem \ref{thm:processus} (ii).

\begin{proof}[Proof of Theorem \ref{thm:processus} (ii).]
First, by Theorem \ref{thm:processus} (i) and Lemma \ref{lem:relationbC}, the convergence

\begin{equation}
\label{eq:cvjointe}
\left(\frac{C_{2nt}(\mathcal{T}_n)}{\sqrt{n}}, \frac{b_{2nt}(\mathcal{T}_n) - nt}{\sqrt{n}}, \frac{K^{\cA}_{nt}(\cT_n) - nt \mu_\cA}{\sqrt{n}} \right)_{0 \leq t \leq 1} \rightarrow \left(\frac{2}{\sigma} \mathbbm{e}_t, \frac{1}{\sigma} \mathbbm{e}_t, \frac{\mu_{\cA} m_{\cA}}{\sigma}   \mathbbm{e}_t + \gamma_{\cA} B_t \right)_{0 \leq t \leq 1}
\end{equation}
holds jointly in distribution in $\D([0,1],\R^{3})$, where $\mathbbm{e}$ is a normalized Brownian excursion and $B$ is an independent standard Brownian motion. In particular, the convergence
\begin{equation}
\label{eq:cvprob}
\left( \frac{b_{2nt}(\mathcal{T}_n)}{n} \right)_{0 \leq t \leq 1}   \quad \mathop{\longrightarrow}_{n \rightarrow \infty} \quad  (t)_{0 \leq t \leq 1}
\end{equation}
holds in probability.

Next, for every $t \in [0,1]$, observe that $N^{\cA}_{2nt}(\mathcal{T}_n) = K^{\cA}_{b_{2nt}(\mathcal{T}_n)}(\cT_n)$, so that
\[
\frac{N^{\cA}_{2nt}(\mathcal{T}_n)-nt \mu_\cA}{\sqrt{n}} = \frac{K^{\cA}_{b_{2nt}(\mathcal{T}_n)}(\cT_n)-nt \mu_\cA}{\sqrt{n}}=\frac{K^{\cA}_{b_{2nt}(\mathcal{T}_n)}(\cT_n)-b_{2nt}(\mathcal{T}_n) \mu_\cA}{\sqrt{n}} + \mu_\cA \frac{b_{2nt}(\mathcal{T}_n)-nt}{\sqrt{n}}.\]
By \eqref{eq:cvjointe}  and \eqref{eq:cvprob}, it follows that the convergence
\begin{align*}
\left(\frac{N^\cA_{2nt}(\mathcal{T}_n)-nt \mu_\cA}{\sqrt{n}} \right)_{0 \leq t \leq 1} \rightarrow \left(\frac{\mu_\cA m_\cA}{\sigma}\mathbbm{e}_t + \gamma_\cA B_t + \frac{\mu_\cA}{\sigma} \mathbbm{e}_t \right)_{0 \leq t \leq 1}
\end{align*}
holds in distribution, jointly with \eqref{eq:cvjointe}.  Since $\mu_\cA m_\cA=\sum_{i \in \cA} (i-1) \mu_i$, this completes the proof.
\end{proof}

\subsection{Extension to multiple passages}
\label{sec:secgen}

In Theorem \ref{thm:processus} (ii), the process $N^\cA$ counts $\cA$-vertices the first time they are visited by the contour exploration. In this Section, we are interested in what happens when instead we count vertices at later visit times.  In this direction, if $T$ is a tree, for every and $1 \leq k \leq i+1$ and $0 \leq \ell \leq 2 |T|$, we denote by $N^{i,k}_{\ell}(T)$ the number of vertices of outdegree $i$ visited at least $k$ times by the contour exploration of $T$ between times $0$ and $\ell$. Finally, for $i \geq0$, we set $N^{i}=N^{\{i\}}$ to simplify notation.

As before, we fix a critical distribution $\mu$ with finite positive variance $\sigma^2$, and we let $\mathcal{T}_n$ denote a $\mu$-GW tree conditioned on having $n$ vertices.

\begin{thm}
\label{thm:cvNik}
We have
\begin{align*}
&\left( \frac{C_{2nt}(\mathcal{T}_n)}{\sqrt{n}}, \frac{N^{i}_{2nt}(\mathcal{T}_n)- nt \mu_i}{\sqrt{n}}, \frac{N^{i,k}_{2nt}(\mathcal{T}_n)- nt \mu_i}{\sqrt{n}} \right)_{0 \leq t \leq 1} \\
&  \qquad \qquad  \qquad  \mathop{\longrightarrow}^{(d)}_{n \rightarrow \infty} \quad \left(\frac{2}{\sigma} \mathbbm{e}_t, \frac{i \mu_i}{\sigma} \mathbbm{e}_t + \gamma_i B_t, \frac{(i-2(k-1))\mu_i}{\sigma} \mathbbm{e}_t + \gamma_i B_t \right)_{0 \leq t \leq 1}
\end{align*}
where $B$ is a standard Brownian motion independent of $\mathbbm{e}$ and $\gamma_{i}=\sqrt{\mu_{i} (1-\mu_{i}) - \frac{1}{\sigma^2} ((i-1)\mu_{i}  )^2}$.
\end{thm}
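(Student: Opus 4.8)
The plan is to deduce this from Theorem \ref{thm:processus} (ii) applied with $\cA=\{i\}$ (so that $\mu_\cA=\mu_i$, $m_\cA=i-1$, and $\gamma_\cA=\gamma_i$), which already gives the joint convergence of the first two coordinates $(C_{2nt}(\cT_n)/\sqrt n, (N^i_{2nt}(\cT_n)-nt\mu_i)/\sqrt n)$ to $(\tfrac2\sigma\mathbbm{e}_t, \tfrac{i\mu_i}\sigma\mathbbm{e}_t+\gamma_i B_t)$. Thus the only new content lies in the third coordinate, and it suffices to prove that the difference process converges in probability to a deterministic multiple of $\mathbbm{e}$, namely that, jointly with the above and uniformly in $t$,
\[
\left(\frac{N^{i}_{2nt}(\cT_n)-N^{i,k}_{2nt}(\cT_n)}{\sqrt n}\right)_{0\le t\le 1}\ \mathop{\longrightarrow}_{n\to\infty}\ \left(\frac{2(k-1)\mu_i}{\sigma}\,\mathbbm{e}_t\right)_{0\le t\le 1}
\]
in probability. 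Indeed, granting this, writing $N^{i,k}=N^{i}-(N^{i}-N^{i,k})$ and subtracting the deterministic limit from the limit of $N^{i}$ yields exactly $\tfrac{(i-2(k-1))\mu_i}{\sigma}\mathbbm{e}_t+\gamma_i B_t$, with the Brownian part $\gamma_i B_t$ shared by the two coordinates since the difference carries no independent fluctuation.

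The second step is a deterministic combinatorial identity at integer contour times. A vertex of outdegree $i$ is visited exactly $i+1$ times by the contour exploration, the $j$-th visit occurring when the exploration returns to it after its $(j-1)$-th subtree; hence once its subtree is fully explored it has been visited $i+1\ge k$ times. Consequently, at time $\lfloor 2nt\rfloor$, an outdegree-$i$ vertex has been visited between $1$ and $k-1$ times if and only if it lies on the ancestral line of the currently visited vertex $u_{\lfloor 2nt\rfloor}$ and the exploration is currently inside one of its first $k-1$ subtrees. Therefore, up to an $O(1)$ boundary term coming from $u_{\lfloor 2nt\rfloor}$ itself (negligible at scale $\sqrt n$),
\[
N^{i}_{2nt}(\cT_n)-N^{i,k}_{2nt}(\cT_n)=\#\bigl\{\text{ancestors }v\text{ of }u_{\lfloor 2nt\rfloor}:\ k_v(\cT_n)=i,\ \text{descending child index}\le k-1\bigr\},
\]
a sum of indicators along a branch whose length is the height $C_{\lfloor 2nt\rfloor}(\cT_n)$.

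The third step is to analyse the statistics of this branch, which is where Proposition \ref{prop:ancestors} enters. The description of the structure of branches there gives the joint law of the outdegrees and descending child indices along the ancestral line: the outdegrees behave like the size-biased law $i\mapsto i\mu_i$, and given an outdegree $i$ the descending child index is uniform on $\{1,\dots,i\}$. Hence the expected proportion of relevant ancestors is $i\mu_i\cdot\frac{k-1}{i}=(k-1)\mu_i$ (using $k\le i+1$), and a binomial-tail/concentration argument shows that a sum of such indicators over a branch of length of order $\sqrt n$ has fluctuations of order $n^{1/4}=o(\sqrt n)$, so that the empirical count is $(k-1)\mu_i\,C_{\lfloor 2nt\rfloor}(\cT_n)+o(\sqrt n)$ uniformly in $t$. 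Dividing by $\sqrt n$ and invoking the convergence $C_{2nt}(\cT_n)/\sqrt n\to\frac2\sigma\mathbbm{e}_t$ of Theorem \ref{thm:CW} then yields the claimed deterministic limit $\frac{2(k-1)\mu_i}{\sigma}\mathbbm{e}_t$, jointly with the convergences already at hand.

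The main obstacle is precisely this third step: obtaining the branch statistics \emph{uniformly in $t\in[0,1]$} and with fluctuations controlled at scale $o(\sqrt n)$, under the conditioning $\{|\cT_n|=n\}$. One must control all ancestral lines simultaneously (including near $t=0$ and $t=1$, where $C$ is small and degeneracies could appear) rather than at a single typical time, which is exactly what the binomial-tail inequalities underlying Proposition \ref{prop:ancestors} are designed to provide; transferring the unconditioned branch description to the conditioned tree $\cT_n$ via the random-walk coding of Lemma \ref{lem:codage} and absolute-continuity arguments, as in Section \ref{proc1}, is the technical heart of the argument.
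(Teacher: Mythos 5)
Your proposal is correct and follows essentially the same route as the paper: the reduction to showing that $(N^{i}_{2nt}-N^{i,k}_{2nt})/\sqrt{n}$ converges to $\frac{2(k-1)\mu_i}{\sigma}\mathbbm{e}_t$, the exact ancestral-line identity $N^i_{\ell}-N^{i,k}_{\ell}=\sum_{j\le k-1}A^{i,j}_{u_{\ell}}$, and the uniform concentration of $A^{i,j}_{u}/|u|$ around $\mu_i$ (via the size-biased spine description and binomial/Hoeffding tails of Proposition \ref{prop:ancestors}) are precisely the ingredients of the paper's argument. The only cosmetic differences are that the paper treats each child index $j$ separately rather than the aggregated count, and handles small heights $|u|<n^{1/10}$ by the trivial bound $A^{i,j}_u\le |u|$ rather than by a separate degeneracy discussion.
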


The main ingredient of the proof is a relation between $N^i(T)$ and $N^{i,j}(T)$, for which we need to introduce some notation. If $T$ is a tree, for $u \in T$ and $1 \leq j \leq i$, we denote by $A^{i,j}_u(T)$ the number of ancestors of $u$  in $T$ with $i$ children whose $j$th child is an ancestor of $u$.  For $0 \leq t \leq 2|T|-2$, denote by $u_t(T)$ the vertex visited at time $\lfloor t \rfloor$ by contour exploration. Then, for every $0 \leq \ell \leq 2|T|-2$, observe that
\begin{equation}
\label{eq:lienN}
N^i_{\ell}(T)-N^{i,k}_{\ell}(T) = \sum_{1 \leq j \leq k-1} A^{i,j}_{u_{\ell}(T)}(T)
\end{equation}
because  $i$-vertices of $T$ that have been visited at least once up to time $\ell$, but not $k$ times yet, are necessarily ancestors of $u_{\ell}(T)$. Indeed, all the subtrees attached to a strict ancestor of $u_{\ell}(T)$ have either been completely visited or not visited at all (except the subtrees containing $u_{\ell}(T)$).

The following result, which is of independent interest, will allow to control the asymptotic behaviour of  $ A_u^{i,j}(\mathcal{T}_n)$, when the height of $u$ is large enough. See \cite{MM03b} for other bounds on $ A^{i,j}(\mathcal{T}_n)$ under an additional finite exponential moment assumption.
For a nonnegative sequence $(r_{n})$, we write $r_{n}=oe(n)$  if there exist $C, \varepsilon>0$ such that $r_n \leq C e^{-n^\varepsilon}$ for every $n \geq 1$.

\begin{prop}
\label{prop:ancestors}
Fix $i \geq 1$. Then
\[
\mathbb{P} \left( \exists u \in \mathcal{T}_n, \exists j \in \llbracket 1, i \rrbracket : |u| \geq n^{1/10}, \left|\frac{A^{i,j}_u(\mathcal{T}_n)}{|u|} - \mu_i \right| \geq \frac{\mu_i}{|u|^{1/100}} \right)  =oe(n).\]
\end{prop}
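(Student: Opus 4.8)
The plan is to reduce the statement to a uniform large-deviation estimate for a single random walk quantity and then union-bound over vertices. First I would fix $i\geq 1$ and $j\in\llbracket 1,i\rrbracket$ and describe $A^{i,j}_u(\mathcal{T}_n)$ spine-theoretically. For a vertex $u$ at height $|u|=h$, its ancestral line consists of $h$ vertices, and $A^{i,j}_u$ counts those ancestors that have outdegree exactly $i$ and whose $j$th child lies on the spine toward $u$. The natural tool is the spinal decomposition: conditionally on the ancestral line of a uniformly chosen vertex (or via the many-to-one formula for conditioned GW trees), the outdegrees of the $h$ spine vertices, together with which child is selected, are close to i.i.d. samples. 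Writing $X_1,\dots,X_h$ for the indicators that the $\ell$th spine vertex has outdegree $i$ and its spine-child is the $j$th one, each $X_\ell$ has mean $\mu_i\cdot\frac{1}{i}\cdot i=\mu_i$ in the unconditioned size-biased picture — more precisely, under the size-biased/spinal law the probability that a spine vertex has outdegree $i$ is $i\mu_i$ (size-biasing) and then the $j$th child is selected with probability $1/i$, giving mean $\mu_i$ per ancestor, matching the target.

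The core estimate I would isolate is: for the \emph{unconditioned} walk, or for a single ancestral line of length $h$, the sum $A^{i,j}$ satisfies a Bernstein/binomial-tail deviation
\[
\mathbb{P}\!\left(\left|\tfrac{A^{i,j}}{h}-\mu_i\right|\geq \tfrac{\mu_i}{h^{1/100}}\right)\leq C\exp\!\left(-c\,h^{1-2/100}\right)=C\exp(-c\,h^{49/50}),
\]
which follows from a standard concentration inequality for sums of independent (or weakly dependent along the spine) Bernoulli variables, since the deviation $\mu_i h^{-1/100}$ relative to $h$ gives a squared-deviation exponent of order $h\cdot h^{-2/100}$. The phrase ``binomial-tail inequalities'' in the surrounding text strongly suggests this is exactly the intended mechanism. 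I would then transfer this to the conditioned tree $\mathcal{T}_n$. Because $|u|\geq n^{1/10}$ in the event, the deviation exponent is at least of order $(n^{1/10})^{49/50}=n^{49/500}$, comfortably of type $e^{-n^\varepsilon}$. The union bound is over all $u\in\mathcal{T}_n$ (at most $n$ of them) and all $j\in\llbracket1,i\rrbracket$ (finitely many), so multiplying the single-vertex bound by $n\cdot i$ preserves the $oe(n)$ decay, since $n\,e^{-c n^{49/500}}=oe(n)$.

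I would expect the main obstacle to be the passage from the unconditioned (or spinal) estimate to the conditioned tree $\mathcal{T}_n$, uniformly over all vertices simultaneously. Two difficulties arise: (a) the conditioning $\{|\mathcal{T}_n|=n\}$ destroys exact independence of the spinal outdegrees, and (b) one must control \emph{all} vertices at once, not just a typical one. For (a), the cleanest route is to go back through Lemma~\ref{lem:codage} and express the ancestral-line outdegrees in terms of the Lukasiewicz walk $S$: the outdegree of the $\ell$th vertex on the spine of $u$ is encoded by the ladder/running-minimum structure of $S$, and the conditioning event $\{S_n=-1,\ S_i\geq 0\}$ has probability of polynomial order $n^{-3/2}$ (by \eqref{eq:NB} with $\cB=\Z_+$), so any unconditioned $oe(n)$-event remains $oe(n)$ after dividing by this polynomial cost. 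For (b), I would fix a deterministic $h\geq n^{1/10}$, bound the probability that \emph{some} vertex at height $h$ violates the estimate by summing over the (at most $n$) such vertices, and then sum over $h$ ranging in $\llbracket n^{1/10},n\rrbracket$; the stretched-exponential bound absorbs all these polynomial factors. The delicate bookkeeping is ensuring the spinal/walk description gives genuinely independent increments so the binomial tail bound applies, which is precisely where the random-walk reformulation of Section~\ref{sec:jointdistribution} does the work.
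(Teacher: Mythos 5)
Your proposal is correct and follows essentially the same route as the paper: bound the conditioned probability by the unconditioned first moment divided by $\P(|\mathcal{T}|=n)\asymp n^{-3/2}$, use the size-biased/spinal (many-to-one) description to identify $A^{i,j}_u$ along an ancestral line of length $k$ as a $\mathrm{Bin}(k,\mu_i)$ variable (outdegree $i$ with size-biased probability $i\mu_i$ times $1/i$ for the $j$th child being on the spine), apply Hoeffding to get the $\exp(-c\,k^{49/50})$ tail, and absorb all polynomial factors in the sum over heights $k\geq n^{1/10}$ and over $j$. The only cosmetic difference is that the paper invokes Kesten's infinite tree and the Lyons--Pemantle--Peres identity directly rather than passing through the Lukasiewicz walk, and its first-moment bound makes the extra factor of $n$ over vertices at a given height unnecessary, but neither point affects the validity of your argument.
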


Before proving this bound, let us explain how Theorem \ref{thm:cvNik} follows.

\begin{proof}[Proof of Theorem \ref{thm:cvNik} from Proposition \ref{prop:ancestors}] We will repeatedly use the identity $C_{\ell}(\mathcal{T}_n)=|u_{\ell}(\mathcal{T}_n)|$ for every $0 \leq \ell \leq 2n-2$. We first check that
\begin{equation}
\label{eq:controleAij} \max_{1 \leq j \leq i} \sup_{0 \leq \ell \leq 2n-2} \left|  \mu_{i} \frac{C_{\ell}(\mathcal{T}_n)}{\sqrt{n}}- \frac{A^{i,j}_{u_{\ell}(\cT_n)}(\mathcal{T}_n)}{\sqrt{n}}  \right|   \quad \mathop{\longrightarrow}^{(\P)}_{n \rightarrow \infty} \quad  0
\end{equation}
First, since for every $\ell$, $ A^{i,j}_{u_{\ell}(\cT_n)}(\mathcal{T}_n) \leq |u_\ell(\mathcal{T}_n)|$, we may restrict our study without loss of generality to the times $\ell$ such that $|u_{\ell}(\cT_n)| \geq n^{1/10}$. Indeed, uniformly for $1 \leq j \leq i$, for $\ell$ such that $|u_{\ell}(\cT_n)| < n^{1/10}$, we have $n^{-1/2}|\mu_i C_\ell(\cT_n) - A^{i,j}_{u_\ell(\cT_n)}(\cT_n)| \leq 2 n^{1/10-1/2} = 2 n^{-2/5}$.

By Proposition \ref{prop:ancestors}, for every $n$ sufficiently large and $\ell$ such that   $|u_{\ell}(\cT_n)| \geq n^{1/10}$, we have, with probability tending to $1$ as $n \rightarrow  \infty$, uniformly in $j$, $ \left| A^{i,j}_{u_{\ell}(\cT_n)}(\cT_n) - \mu_i |u_{\ell}(\cT_n) | \right| < \mu_i | u_{\ell}(\cT_n)|^{99/100}$. By Theorem \ref{thm:CW}, $\max_{0 \leq \ell \leq 2n-2} C_{\ell}(\cT_n)/\sqrt{n}$ converges in probability as $n \to \infty$, so that we have $\max_{0 \leq \ell \leq 2n-2} |u_{\ell}(\cT_n)|^{99/100}/\sqrt{n} \to 0$. This entails \eqref{eq:controleAij}. 

Now, using \eqref{eq:lienN}, for $0 \leq t \leq 1$, write  
 \[\frac{N^{i,k}_{2nt}(\mathcal{T}_n)- nt \mu_i}{\sqrt{n}}=\frac{N^{i}_{2nt}(\mathcal{T}_n)-nt\mu_i}{\sqrt{n}} - \sum_{1 \leq j \leq k-1} \frac{A^{i,j}_{u_{2nt}(\mathcal{T}_n)}(\cT_n)}{\sqrt{n}}\]
Hence, by combining Theorem \ref{thm:processus} (ii) with \eqref{eq:controleAij}, we get

\begin{align*}
&\left( \frac{C_{2nt}(\mathcal{T}_n)}{\sqrt{n}}, \frac{N^{i}_{2nt}(\mathcal{T}_n)- nt \mu_i}{\sqrt{n}}, \frac{N^{i,k}_{2nt}(\mathcal{T}_n)- nt \mu_i}{\sqrt{n}} \right)_{0 \leq t \leq 1} \\
&  \qquad \qquad  \qquad  \mathop{\longrightarrow}^{(d)}_{n \rightarrow \infty} \quad \left(\frac{2}{\sigma} \mathbbm{e}_t, \frac{i \mu_i}{\sigma} \mathbbm{e}_t + \gamma_i B_t, \frac{i \mu_i}{\sigma} \mathbbm{e}_t + \gamma_i B_t -  (k-1) \mu_{i} \frac{2}{\sigma} \mathbbm{e}_{t}\right)_{0 \leq t \leq 1} 
\end{align*}
where $B$ is a standard Brownian motion and $\gamma_{i}= \sqrt{\mu_{i} (1-\mu_{i}) - \frac{1}{\sigma^2} ((i-1)\mu_{i}  )^2}$, which gives the desired result.
\end{proof}

We now get into the proof of Proposition \ref{prop:ancestors}.

\begin{proof}[Proof of Proposition \ref{prop:ancestors}]
First, observe that if $ \mathcal{T}$ is a nonconditioned $\mu$-GW tree, then

\begin{align*}
&\mathbb{P} \left( \exists u \in \mathcal{T}_n, \exists j \in \llbracket 1, i \rrbracket : |u| \geq n^{1/10}, \left|\frac{A^{i,j}_u(\cT_n)}{|u|} - \mu_i \right| \geq \frac{\mu_i}{|u|^{1/100}} \right)  \\
&  \qquad  \qquad \qquad  \qquad \leq \frac{1}{\P(| \mathcal{T}|=n )} \sum_{k= \lceil n^{1/10} \rceil}^n \sum_{j=1}^{i} \E \left[  \sum_{|u|=k} \mathbbm{1}_{\left|\frac{A^{i,j}_u(\cT)}{|u|} - \mu_i \right| \geq \frac{\mu_i}{|u|^{1/100}}}\right].
\end{align*}

In order to compute these expectations, let us mention the existence of the local limit $\cT^*$ of the trees $\cT_n$. This limit is defined as the random variable on the set of infinite trees satisfying, for any $r \geq 0$,
\begin{align*}
B_r\left(\cT_n \right) \underset{n \rightarrow \infty}{\rightarrow} B_r \left( \cT^* \right),
\end{align*}
where $B_r$ denotes the ball of radius $r$ centered at the root for the graph distance (all edges of the tree having length $1$).
$\cT^*$ is an infinite tree called Kesten's tree, made of a unique infinite branch on which i.i.d. $\mu$-Galton-Watson trees are planted (see \cite{Kes86} for details). The local behaviour of the trees $\cT_n$ can be deduced from the properties of this infinite tree; in particular, a standard size-biasing identity \emph{à la} Lyons--Pemantle--Peres \cite{LPP95} (see \cite[Eq.~(23)]{Duq09} for a precise statement) gives

\begin{align*}
\E \left[  \sum_{|u|=k} \mathbbm{1}_{\left|\frac{A^{i,j}_u(\cT)}{|u|} - \mu_i \right| \geq \frac{\mu_i}{|u|^{1/100}}}\right] = \E \left[ \mathbbm{1}_{\left|\frac{A^{i,j}_{U_k(\cT^*)}(\cT^*)}{k} - \mu_i \right| \geq \frac{\mu_i}{k^{1/100}}}\right]
= \P \left( \left| \frac{Bin(k,\mu_{i})}{k}-\mu_{i} \right| \geq  \frac{\mu_i}{k^{1/100}}\right),
\end{align*}
where $U_k(\cT^*)$ denotes the vertex of the unique infinite branch of $\cT^*$ at height $k$. In particular, this expectation does not depend on $j$.

By \eqref{eq:NB} (applied with $\cB=\Z_{+}$),  we therefore have, for some constant $C$:
\begin{align*}
&\mathbb{P} \left( \exists u \in \mathcal{T}_n, \exists j \in \llbracket 1, i \rrbracket : |u| \geq n^{1/10}, \left|\frac{A^{i,j}_u(\cT_n)}{|u|} - \mu_i \right| \geq \frac{\mu_i}{|u|^{1/100}} \right) \\
& \qquad \qquad\qquad\qquad\qquad \leq C i n^{3/2 }  \sum_{k= \lceil n^{1/10} \rceil}^n  \P \left( \left| \frac{Bin(k,\mu_{i})}{k}-\mu_{i} \right| \geq  \frac{\mu_i}{k^{1/100}}\right) \\
& \qquad \qquad\qquad\qquad\qquad  \leq C i n^{3/2 }  \sum_{k= \lceil n^{1/10} \rceil}^n  2 \exp \left( -2 k \left( \mu_i k^{-1/100} \right)^2 \right)
\end{align*}
where the last line is obtained by using Hoeffding inequality. Thus,
\begin{align*}
&\mathbb{P} \left( \exists u \in \mathcal{T}_n, \exists j \in \llbracket 1, i \rrbracket : |u| \geq n^{1/10}, \left|\frac{A^{i,j}_u(\cT_n)}{|u|} - \mu_i \right| \geq \frac{\mu_i}{|u|^{1/100}} \right) \\
& \qquad \qquad\qquad\qquad\qquad  \leq 2 C i n^{3/2 }  \sum_{k= \lceil n^{1/10} \rceil}^n   \exp \left( -2 \mu_i^2 k^{49/50} \right) = oe(n).
\end{align*}
The desired result follows.
\end{proof}

Finally, let us remark that the estimate of Proposition \ref{prop:ancestors} is strong enough to get the following refinement of Theorem \ref{thm:cvNik} (whose proof is left to the reader):

\begin{thm}
\label{thm:cvNik2}
Let $k : \Z_+ \rightarrow \Z_+$ such that, for $i \in \Z_+$, $1 \leq k(i) \leq i+1$. Let $\cA \subset \Z_+$. Then the following convergence holds in distribution :
\begin{align*}
&\left( \frac{C_{2nt}(\mathcal{T}_n)}{\sqrt{n}}, \frac{N^{\cA}_{2nt}(\mathcal{T}_n)- nt \mu_\cA}{\sqrt{n}}, \frac{\sum_{i \in \cA} N^{i, k(i)}_{2nt}(\mathcal{T}_n)- nt \mu_i}{\sqrt{n}} \right)_{0 \leq t \leq 1} \\
&  \qquad \qquad  \qquad  \mathop{\longrightarrow}^{(d)}_{n \rightarrow \infty} \quad \left(\frac{2}{\sigma} \mathbbm{e}_t, \frac{\sum_{i \in \cA} i \mu_i}{\sigma} \mathbbm{e}_t + \gamma_\cA B_t, \frac{\sum_{i \in \cA}\left(i -2(k(i)-1)\right) \mu_i}{\sigma} \mathbbm{e}_t + \gamma_\cA B_t \right)_{0 \leq t \leq 1} 
\end{align*}
where $B$ is a standard Brownian motion and $\gamma_{\cA}= \sqrt{\mu_{\cA} (1-\mu_{\cA}) - \frac{1}{\sigma^2} (\sum_{i \in \cA} (i-1)\mu_{i})^2}$.
\end{thm}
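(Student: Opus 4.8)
The plan is to reduce Theorem \ref{thm:cvNik2} to the building blocks already established, namely Theorem \ref{thm:processus} (ii) and the ancestor control of Proposition \ref{prop:ancestors}, by summing the relation \eqref{eq:lienN} over the relevant outdegrees. The first component of the convergence is immediate from Theorem \ref{thm:CW}. For the second component, the convergence of $(N^{\cA}_{2nt}(\mathcal{T}_n)-nt\mu_\cA)/\sqrt{n}$ toward $\frac{\sum_{i \in \cA} i \mu_i}{\sigma} \mathbbm{e}_t + \gamma_\cA B_t$ is precisely Theorem \ref{thm:processus} (ii), and this same Brownian motion $B$ and excursion $\mathbbm{e}$ will drive the third component, so the joint convergence is what must be tracked carefully.

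For the third component I would start from the summed version of \eqref{eq:lienN}: for every $0 \leq \ell \leq 2n-2$,
\begin{align*}
\sum_{i \in \cA} \left( N^i_{\ell}(\mathcal{T}_n) - N^{i,k(i)}_{\ell}(\mathcal{T}_n) \right) = \sum_{i \in \cA} \sum_{1 \leq j \leq k(i)-1} A^{i,j}_{u_\ell(\mathcal{T}_n)}(\mathcal{T}_n).
\end{align*}
Writing $\ell = 2nt$ and dividing by $\sqrt{n}$, one obtains
\begin{align*}
\frac{\sum_{i \in \cA} N^{i,k(i)}_{2nt}(\mathcal{T}_n) - nt\mu_i}{\sqrt{n}} = \frac{N^{\cA}_{2nt}(\mathcal{T}_n) - nt\mu_\cA}{\sqrt{n}} - \sum_{i \in \cA} \sum_{1 \leq j \leq k(i)-1} \frac{A^{i,j}_{u_{2nt}(\mathcal{T}_n)}(\mathcal{T}_n)}{\sqrt{n}},
\end{align*}
using that $\sum_{i \in \cA} N^i_{2nt}(\mathcal{T}_n) = N^{\cA}_{2nt}(\mathcal{T}_n)$ (every $\cA$-vertex has a unique outdegree in $\cA$) and that $\sum_{i \in \cA}\mu_i = \mu_\cA$. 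The key ingredient is then the uniform estimate \eqref{eq:controleAij}: by Proposition \ref{prop:ancestors}, for each fixed $i$, $\max_{1 \leq j \leq i}\sup_{0 \leq \ell \leq 2n-2}|\mu_i C_\ell(\mathcal{T}_n)/\sqrt{n} - A^{i,j}_{u_\ell(\mathcal{T}_n)}(\mathcal{T}_n)/\sqrt{n}|$ converges to $0$ in probability, exactly as in the proof of Theorem \ref{thm:cvNik}. Summing this over the finitely many $i$ for which the inner sum over $j$ is nonempty, I would replace each $A^{i,j}_{u_{2nt}(\mathcal{T}_n)}(\mathcal{T}_n)/\sqrt{n}$ by $\mu_i C_{2nt}(\mathcal{T}_n)/\sqrt{n}$, picking up a total of $\sum_{i \in \cA}(k(i)-1)\mu_i$ copies of $C_{2nt}(\mathcal{T}_n)/\sqrt{n} \to \frac{2}{\sigma}\mathbbm{e}_t$.

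Combining this replacement with Theorem \ref{thm:processus} (ii) then gives the limit
\begin{align*}
\frac{\sum_{i \in \cA} i \mu_i}{\sigma}\mathbbm{e}_t + \gamma_\cA B_t - \sum_{i \in \cA}(k(i)-1)\mu_i \cdot \frac{2}{\sigma}\mathbbm{e}_t = \frac{\sum_{i \in \cA}\left(i - 2(k(i)-1)\right)\mu_i}{\sigma}\mathbbm{e}_t + \gamma_\cA B_t,
\end{align*}
which is exactly the claimed third coordinate, and the joint convergence of all three coordinates follows because the replacement error tends to $0$ in probability while the first and second coordinates already converge jointly by Theorem \ref{thm:processus} (ii). The only subtlety worth flagging — and the step I expect to require a little care rather than being purely routine — is the finiteness issue: if $\cA$ is infinite, the sum $\sum_{i \in \cA}\sum_{1 \leq j \leq k(i)-1}A^{i,j}$ involves infinitely many terms, so one must argue that the contribution of large outdegrees is negligible. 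Here the constraint $k(i) \leq i+1$ keeps the inner sum finite termwise, and one can control the tail uniformly using that the total number of visited $\cA$-vertices up to time $2nt$ is at most $N^{\cA}_{2nt}(\mathcal{T}_n)$, whose fluctuations are already $O(\sqrt{n})$; alternatively, since $\sum_{i}(i-1)\mu_i$ and $\sum_i \mu_i$ both converge absolutely, the deterministic coefficient $\sum_{i \in \cA}(k(i)-1)\mu_i$ is finite and the probabilistic replacement can be carried out by truncating $\cA$ at a large threshold and letting the threshold grow. This tail control, together with the uniform-in-$\ell$ nature of \eqref{eq:controleAij}, is the heart of the argument, and the rest is bookkeeping.
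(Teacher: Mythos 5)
Your proposal is correct and follows exactly the route the paper intends: the paper leaves this proof to the reader, remarking only that Proposition \ref{prop:ancestors} is strong enough, and your argument — summing \eqref{eq:lienN} over $i \in \cA$, invoking the uniform estimate \eqref{eq:controleAij} as in the proof of Theorem \ref{thm:cvNik}, and combining with Theorem \ref{thm:processus} (ii) — is precisely that reduction. Your flagging of the truncation needed when $\cA$ is infinite (handled via the absolute convergence of $\sum_i i\mu_i$) is the one genuine point of care, and your treatment of it is adequate.
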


\section{Asymptotic normality of outdegrees in large Galton-Watson trees}
\label{sec:asymptotic_normality}

The main goal of this Section is to prove Theorem \ref{thm:normality} (i) and (ii). We fix a critical offspring distribution $\mu$ with finite positive variance $\sigma^2$, and $\cA, \cB \subset \mathbb{Z}_{+}$ such that $\mu_\cB>0$. If $T$ is a tree, recall that $N^{\cA}(T)$ is the number of $\cA$-vertices in $T$, and that  $\mathcal{T}_n^{\cB}$ is a $\mu$-GW tree conditioned to have $n$ $\cB$-vertices. 
In the sequel, $\mathcal{T}$ is a nonconditioned $\mu$-GW tree. We also assume for technical convenience that $p_\cB$ and $p_{\cB^c}$ are both aperiodic (but the results carry through in the general setting with mild modifications).

\subsection{Expectation of $N^{\cA}(\mathcal{T}_n^{\cB})$}
\label{subsection:expectation}
Our goal is here to prove Theorem \ref{thm:normality} (i). For every $n \geq 1$, define the interval $I_n := \llbracket \frac{n}{\mu_\cB} - n^{3/4}, \frac{n}{\mu_\cB} +n^{3/4} \rrbracket$. The proof relies on the following estimates.

\begin{lem}
\label{lem:estim}
We have:
\begin{enumerate}
\item[(i)] $\E \left(  N^{\Z_+} \left( \mathcal{T}_n^\cB \right) \mathds{1}_{N^{\Z_+} \left( \mathcal{T}_n^\cB \right) \notin  I_n} \right) = oe(n)$;
\item[(ii)] $\P \left( \left. \left| \frac{N^\cA \left( \mathcal{T}_n^\cB \right)}{n} - \frac{\mu_\cA}{\mu_\cB} \right| \geq n^{-1/5} \right| N^{\Z_+}(\mathcal{T}_n^\cB)\in I_n \right) \rightarrow 0$ as $n \rightarrow \infty$.
\end{enumerate}
\end{lem}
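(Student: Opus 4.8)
The plan is to prove both estimates by passing from the conditioned tree $\mathcal{T}_n^\cB$ to the nonconditioned tree $\mathcal{T}$ via Proposition \ref{prop:joint}, so that statements about $N^{\Z_+}$ and $N^\cA$ become statements about the random walks $(S_i)$ and the counting walks $(J_i^\cB)$, $(J_i^\cA)$. Recall that conditioning on $N^\cB(\mathcal{T})=n$ means conditioning the walk to satisfy $S_m=-1$, $J_m^\cB=n$ (with the cyclic/first-passage constraint absorbed by the factor $1/m$), where $m=N^{\Z_+}(\mathcal{T})$ is the total size. So for the size-distribution one writes, for each possible value $m$,
\begin{equation*}
\P\left(N^{\Z_+}(\mathcal{T}_n^\cB)=m\right) = \frac{\P\left(N^{\Z_+}(\mathcal{T})=m,\,N^\cB(\mathcal{T})=n\right)}{\P\left(N^\cB(\mathcal{T})=n\right)} = \frac{\frac{1}{m}\P(S_m=-1,\,J_m^\cB=n)}{\P(N^\cB(\mathcal{T})=n)}.
\end{equation*}
The denominator is of order $n^{-3/2}$ by \eqref{eq:NB}, so I only need sharp control on the numerator.

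For part (i), the key point is that the typical size is $m\approx n/\mu_\cB$ with fluctuations of order $\sqrt{n}$, while $I_n$ is a window of width $n^{3/4}$, which is far wider than the typical fluctuation scale. First I would fix $m$ outside $I_n$ and estimate $\P(S_m=-1,\,J_m^\cB=n)$. Writing $n=\mu_\cB m + (n-\mu_\cB m)$, the event $\{J_m^\cB=n\}$ forces $J_m^\cB$ to deviate from its mean $\mu_\cB m$ by $|n-\mu_\cB m|\geq \mu_\cB n^{3/4}/2$ (roughly), i.e.\ by order $m^{3/4}$, which is $\gg\sqrt{m}$. The probability of such a large deviation of a sum of i.i.d.\ bounded-variance increments, even with the joint constraint $S_m=-1$, decays like $\exp(-c\,m^{1/2})$ or faster; I would obtain this from a Hoeffding/Bernstein bound on $J_m^\cB=\sum_{i=1}^m \mathds{1}_{S_i-S_{i-1}+1\in\cB}$ (a sum of i.i.d.\ Bernoulli$(\mu_\cB)$ variables independent of whether we further condition on $S_m$, up to the local-limit cost $O(m^{-1/2})$ for the $S_m=-1$ constraint). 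Summing over all $m$ (there are at most $n$ relevant values, and $m\cdot\P(\cdots)$ only adds a polynomial factor), the polynomial prefactors $n^{3/2}$ and the extra weight $m$ are absorbed by the stretched-exponential decay, giving the $oe(n)$ bound.

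For part (ii), conditionally on $N^{\Z_+}(\mathcal{T}_n^\cB)=m\in I_n$, I want to show $N^\cA(\mathcal{T}_n^\cB)/n$ concentrates around $\mu_\cA/\mu_\cB$. Since $N^\cA(\mathcal{T})=J_m^\cA=\sum_{i=1}^m\mathds{1}_{S_i-S_{i-1}+1\in\cA}$ is again a sum of i.i.d.\ Bernoulli$(\mu_\cA)$ variables with mean $\mu_\cA m\approx \mu_\cA n/\mu_\cB$, a deviation of $N^\cA/n$ from $\mu_\cA/\mu_\cB$ by $n^{-1/5}$ corresponds to a deviation of $J_m^\cA$ by order $n^{4/5}\gg\sqrt{m}$ from its mean (after accounting for the at most $n^{3/4}$ slack coming from $m$ ranging over $I_n$). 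Again I would use a Hoeffding-type concentration bound, now for the joint event $\{S_m=-1,\,J_m^\cB=n,\,|J_m^\cA-\mu_\cA m|\geq \tfrac12 n^{4/5}\}$, together with the lower bound $\P(N^{\Z_+}(\mathcal{T}_n^\cB)\in I_n)\geq c>0$ coming from Lemma \ref{lem:asymp} (ii) (which gives the correct $n^{-2}$ order for the joint walk probability, matching the $n^{-3/2}$ denominator up to the $\sum_{m\in I_n}$). The technical subtlety is that the three quantities $S_m$, $J_m^\cB$, $J_m^\cA$ are correlated increments, but one can bound the conditional probability of the $J_m^\cA$-deviation by a ratio whose numerator is controlled by Hoeffding applied to $J_m^\cA$ alone (paying a local-limit factor for the remaining constraints) and whose denominator is the $\Theta(n^{-2})$ estimate from Lemma \ref{lem:asymp} (ii).

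The main obstacle I anticipate is handling the \emph{joint} constraint correctly: the clean i.i.d.\ concentration bounds apply to $J_m^\cB$ or $J_m^\cA$ in isolation, but here they must hold jointly with the lattice-bridge constraint $\{S_m=-1\}$. The cleanest way around this is to first drop the $S_m=-1$ conditioning at the cost of a multiplicative $O(\sqrt{m})$ factor via the local limit theorem \ref{llt} (since $\P(S_m=-1)\asymp m^{-1/2}$ and conditioning on a single lattice point can only inflate an unconditioned probability by $O(\sqrt{m})$ up to constants), and then apply Hoeffding to the free sum. The stretched-exponential decay in $m$ comfortably dominates the resulting $\sqrt{m}$ and the $n^{3/2}$ from \eqref{eq:NB}, so the argument closes; the only care needed is checking that the exponents ($1/10$, $3/4$, $4/5$, $1/5$) are chosen so that every deviation is of order strictly larger than $\sqrt{m}$ and every polynomial loss is beaten by the exponential gain.
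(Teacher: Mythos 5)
Your proposal is correct and follows essentially the same route as the paper: both reduce to the random walk via Proposition \ref{prop:joint}, exploit the fact that $J_m^\cB$ and $J_m^\cA$ are exact Binomial sums so that Hoeffding gives stretched-exponential bounds outside $I_n$ (for (i)) and for the $n^{4/5}$-deviation of $J_m^\cA$ (for (ii)), and control the denominators via \eqref{eq:NB} and the $\Theta(n^{-2})$ estimate of Lemma \ref{lem:asymp} (ii). The only cosmetic differences are that the paper avoids your $O(\sqrt{m})$ local-limit factor by simply bounding $\P(S_m=-1,\,J_m^\cB=n)\le\P(J_m^\cB=n)$, and that the sum in (i) runs over all $m\ge n$ rather than ``at most $n$ relevant values'' --- which your stretched-exponential decay absorbs anyway.
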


\begin{proof}[Proof of Theorem \ref{thm:normality} (i) using Lemma \ref{lem:estim}]
Start by writing the quantity $\E [ N^\cA ( \mathcal{T}_n^\cB ) ]$ as
\begin{align}
\label{align:fin}
\E [ N^\cA ( \mathcal{T}_n^\cB ) ]=\P ( N^{\Z_+} ( \mathcal{T}_n^\cB ) \in  I_n ) \E [  N^\cA ( \mathcal{T}_n^\cB ) | N^{\Z_+} ( \mathcal{T}_n^\cB ) \in  I_n  ] + 
 \E [ N^\cA ( \mathcal{T}_n^\cB ) \mathds{1}_{N^{\Z_+} ( \mathcal{T}_n^\cB ) \notin  I_n} ].
\end{align}
Observe that $\E [ N^\cA ( \mathcal{T}_n^\cB ) \mathds{1}_{N^{\Z_+} ( \mathcal{T}_n^\cB ) \notin  I_n} ] \leq \E [ N^{\Z_+} ( \mathcal{T}_n^\cB ) \mathds{1}_{N^{\Z_+} ( \mathcal{T}_n^\cB ) \notin  I_n} ]=oe(n)$ by Lemma \ref{lem:estim} (i). In order to bound the first term in the sum of \eqref{align:fin}, bound $\Big|\frac{1}{n} \E \left[  N^\cA \left( \mathcal{T}_n^\cB \right) | N^{\Z_+} \left( \mathcal{T}_n^\cB \right) \in  I_n  \right] - \frac{\mu_\cA}{\mu_\cB} \Big|$ from above by 
\begin{align*}
 \frac{1}{n^{1/5}} + \left( \frac{\sup I_n}{n} + \frac{\mu_\cA}{\mu_\cB} \right) \P \left( \left. \left| \frac{N^\cA ( \mathcal{T}_n^\cB  )}{n} - \frac{\mu_\cA}{\mu_\cB} \right| \geq  \frac{1}{n^{1/5}} \right| N^{\Z_+}(\mathcal{T}_n^\cB )\in I_n \right).
\end{align*}
This last quantity tends to $0$ as $n \rightarrow \infty$  by Lemma \ref{lem:estim} (ii) and since ${\sup I_n}/{n} \rightarrow {1}/{\mu_\cB}$.
In order to complete the proof, it remains to observe that since  $N^{\Z_+}(\mathcal{T}_n^\cB) \geq n$, Lemma \ref{lem:estim} (i) implies that $\P \left( N^{\Z_+} \left( \mathcal{T}_n^\cB \right) \notin I_n \right) \rightarrow 0$.
\end{proof}

\begin{proof}[Proof of Lemma \ref{lem:estim}]
First, remark that
\begin{align*}
\E \left[  N^{\Z_+} \left( \mathcal{T}_n^\cB \right) \mathds{1}_{N^{\Z_+} \left( \mathcal{T}_n^\cB \right) \notin  I_n} \right] &= \sum_{\substack{k \notin I_n \\ k \geq n}} k \, \P \left( N^{\Z_+} \left( \mathcal{T}_n^\cB \right) = k \right) \nonumber \\ 
&= \frac{1}{\P \left( N^\cB(\cT)=n \right)} \sum_{\substack{k \notin I_n \\ k \geq n}} k \, \P \left( N^{\Z_+} \left( \cT \right) = k, N^\cB \left( \cT \right) = n \right) \nonumber \\
& \leq \frac{1}{\P \left( N^\cB(\cT)=n \right)} \sum_{\substack{k \notin I_n \\ k \geq n}} \P \left( J_k^\cB = n \right) \text{ by Proposition \ref{prop:joint}}.
\end{align*}
We now use the fact that, for any $k$, $J_k^\cB$ has a binomial distribution of parameters $\left( k,\mu_\cB \right)$. Remark that, if $k \notin I_n$, then $|n-k\mu_\cB| \geq k^{3/5}$. Hence, by Hoeffding's inequality, for $k \not \in I_{n}$, $\P( J_k^\cB = n) \leq \P( |J_k^\cB - k \mu_{\cB}| \geq k^{3/5}) \leq 2 e^{-2 k^{1/5}}$. Therefore $\sum_{{k \notin I_n, k \geq n}} \P ( J_k^\cB = n )=oe(n)$. (i) follows by \eqref{eq:NB} (applied with $\cB=\Z_{+}$),

For (ii), we use the fact that
\begin{align*}
&\P \left( \left. \left| \frac{N^\cA \left( \mathcal{T}_n^\cB \right)}{n} - \frac{\mu_\cA}{\mu_\cB} \right| \geq n^{-1/4} \right| N^{\Z_+}(\mathcal{T}_n^\cB)\in I_n \right) \\
&=\frac{1}{\P \left( \left. N^\cB(\cT)=n \right| N^{\Z_+}(\cT) \in I_n \right)}\P \left( \left. \left| \frac{N^\cA \left( \cT \right)}{n} - \frac{\mu_\cA}{\mu_\cB} \right| \geq n^{-1/5}, N^\cB(\cT)=n \right| N^{\Z_+}(\cT)\in I_n \right)
\end{align*}
Note that
\begin{align*}
\frac{1}{\P \left( \left. N^\cB(\cT)=n \right| N^{\Z_+}(\cT) \in I_n \right)} = \frac{\P \left( N^{\Z_+}(\cT) \in I_n \right)}{\P \left( N^\cB(\cT)=n, N^{\Z_+}(\cT) \in I_n \right)} \leq \frac{1}{\P \left( N^\cB(\cT)=n, N^{\Z_+}(\cT) = \lfloor \frac{n}{\mu_\cB} \rfloor \right)}
\end{align*}
which grows at most polynomially in $n$ according to Lemma \ref{lem:asymp} (ii). The second assertion now follows from the fact that
\begin{align*}
\P \left( \left. \left| \frac{N^\cA \left( \cT \right)}{n} - \frac{\mu_\cA}{\mu_\cB} \right| \geq n^{-1/5} \right| N^{\Z_+}(\cT)\in I_n \right) &\leq \underset{k \in I_n}{\sup} \P \left( \left. \left| \frac{N^\cA \left( \cT \right)}{n} - \frac{\mu_\cA}{\mu_\cB} \right| \geq n^{-1/5} \right| N^{\Z_+}(\cT) = k \right).
\end{align*}

In virtue of \eqref{eq:NB} (applied with $\cB=\Z_{+}$), it suffices to check that $ \P ( | \frac{N^\cA \left( \cT \right)}{n} - \frac{\mu_\cA}{\mu_\cB} | \geq n^{-1/5}, N^{\Z_+}(\cT) = k )=oe(n)$ when $k \in I_{n}$. By  Proposition \ref{prop:joint},
\[ \P \left( \left| \frac{N^\cA \left( \cT \right)}{n} - \frac{\mu_\cA}{\mu_\cB} \right| \geq n^{-1/5}, N^{\Z_+}(\cT) = k \right) \leq \P \left( \left| J_{k}^\cA - n \frac{\mu_\cA}{\mu_\cB} \right| \geq n^{4/5}\right).\]
When $k \in I_{n}$, this last quantity is bounded from above by  $\P ( | J_{k}^\cA - k\mu_\cA | \geq n^{4/5}-\mu_\cA n^{3/4} )$, which is $oe(n)$ since $J_k^\cA$ has a binomial distribution of parameters $\left( k,\mu_\cA \right)$. This proves (ii).
\end{proof}

\subsection{Asymptotic normality of $N^{\cA}(\mathcal{T}_k^{\cB})$}
\label{ssec:normality}

The first step is to establish the following local version of  Theorem \ref{thm:normality} when  $\cA=\Z_{+}$.

\begin{prop}
\label{loool}
As $k \rightarrow \infty$,
\begin{align*}
\P \left(  N^{\Z_+} \left(\mathcal{T}^{\cB}_k \right)= \lfloor {k}/{\mu_{\cB}} + \sqrt{k} y \rfloor \right) \sim \sqrt{\frac{\mu_\cB^3}{2 \pi \gamma_\cB^2}} \, \, \, \frac{1}{\sqrt{k}} \exp \left( - \frac{\mu_\cB^3}{\gamma_\cB^2} \frac{y^2}{2} \right),
\end{align*}
uniformly for $y$ in a compact subset of $\mathbb{R}$.
\end{prop}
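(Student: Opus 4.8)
The plan is to turn the conditional probability into a ratio of the \emph{joint} law of $(N^{\Z_{+}}(\cT),N^{\cB}(\cT))$ over the \emph{marginal} law of $N^{\cB}(\cT)$, and then feed each factor into the asymptotics already at our disposal: Lemma \ref{lem:asymp} (ii) for the joint law and \eqref{eq:NB} for the marginal. Concretely, writing $n=n_k(y):=\lfloor k/\mu_\cB+\sqrt{k}\,y\rfloor$, the definition of $\mathcal{T}_k^\cB$ gives
\[
\P\!\left(N^{\Z_{+}}(\mathcal{T}_k^\cB)=n\right)=\frac{\P\!\left(N^{\Z_{+}}(\cT)=n,\ N^{\cB}(\cT)=k\right)}{\P\!\left(N^{\cB}(\cT)=k\right)},
\]
so the whole statement reduces to plugging in sharp equivalents for the numerator and denominator and simplifying the constants.

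The one genuinely nontrivial point is that Lemma \ref{lem:asymp} (ii) is phrased with $n$ as the running variable and $k$ written as $k_n(c)=\lfloor\mu_\cB n+c\sqrt{n}\rfloor$ for $c$ in a compact set, whereas here $k$ is the running variable and $n$ is the unknown. First I would invert this: solving $k=\mu_\cB n+c\sqrt{n}$ up to the floor suggests setting $c=c_k(y):=(k-\mu_\cB n)/\sqrt{n}$. Using $n=k/\mu_\cB+\sqrt{k}\,y+O(1)$ and $\sqrt{n}=\sqrt{k/\mu_\cB}\,(1+o(1))$, a short computation gives
\[
c_k(y)=-\mu_\cB^{3/2}\,y+o(1),
\]
uniformly for $y$ in a fixed compact subset of $\R$; in particular $n\to\infty$ and $c_k(y)$ remains in a compact set. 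Lemma \ref{lem:asymp} (ii) then yields, uniformly in $y$,
\[
\P\!\left(N^{\Z_{+}}(\cT)=n,\ N^{\cB}(\cT)=k\right)\sim\frac{1}{n^2}\,\frac{1}{2\pi\sigma\gamma_\cB}\,\exp\!\left(-\frac{\mu_\cB^3\,y^2}{2\gamma_\cB^2}\right),
\]
since $c_k(y)^2\to\mu_\cB^3 y^2$.

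To conclude I would replace $1/n^2$ by $\mu_\cB^2/k^2\,(1+o(1))$ (legitimate because $n=k/\mu_\cB\,(1+o(1))$ uniformly over compact $y$-ranges) and divide by the marginal equivalent $\P(N^{\cB}(\cT)=k)\sim\sqrt{\mu_\cB}/(\sqrt{2\pi}\,\sigma)\,k^{-3/2}$ coming from \eqref{eq:NB} applied with the present $\cB$. The constants then collapse, using $\mu_\cB^2/\sqrt{\mu_\cB}=\mu_\cB^{3/2}$ and $\sqrt{2\pi}/(2\pi)=1/\sqrt{2\pi}$, to
\[
\frac{\mu_\cB^{3/2}}{\sqrt{2\pi}\,\gamma_\cB}\,k^{-1/2}\,\exp\!\left(-\frac{\mu_\cB^3\,y^2}{2\gamma_\cB^2}\right)=\sqrt{\frac{\mu_\cB^3}{2\pi\gamma_\cB^2}}\,\frac{1}{\sqrt{k}}\,\exp\!\left(-\frac{\mu_\cB^3}{\gamma_\cB^2}\frac{y^2}{2}\right),
\]
which is exactly the claim.

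The arithmetic is routine, so I expect the real care to go into the bookkeeping of Step two: matching the two distinct parametrizations and, above all, transferring the mode of uniformity. Lemma \ref{lem:asymp} (ii) is uniform for $c$ in a compact set, but we need uniformity for $y$ in a compact set; since $y\mapsto c_k(y)$ is, asymptotically, an affine homeomorphism mapping compacts to compacts and the floor corrections are $O(1/\sqrt{k})$, the two uniformity statements are equivalent, but verifying that the $o(1)$ error terms in $c_k(y)$ and in $n=k/\mu_\cB\,(1+o(1))$ are uniform in $y$ is the step I would write out most carefully.
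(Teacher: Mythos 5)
Your proposal is correct and follows essentially the same route as the paper: the paper likewise combines Lemma \ref{lem:asymp} (ii) with \eqref{eq:NB} and then inverts the parametrization via the observation that $n = k/\mu_{\cB} + y\sqrt{k} + O(1)$ is equivalent to $k = n\mu_{\cB} - y\sqrt{n}\,\mu_{\cB}^{3/2} + O(1)$, which is exactly your change of variables $c_k(y) \to -\mu_{\cB}^{3/2}y$. The constants and the uniformity bookkeeping all check out.
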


It is standard that this implies the following asymptotic normality:
\begin{equation}
\label{eq:NZ}
\frac{N^{\Z_+}(\mathcal{T}^{\cB}_k)-k/\mu_{\cB}}{\sqrt{k}} \overset{d}{\rightarrow} \mathcal{N} (0, \frac{\gamma^2_{\cB}}{\mu^3_{\cB}}).
\end{equation}

\begin{proof}[Proof of Proposition \ref{loool}]
By Lemma \ref{lem:asymp} (ii), we have as $n \rightarrow \infty$, uniformly for $c$ in a compact subset of $\R$,
\begin{align}
\mathbb{P} \left( N^{\Z_+}(\cT)=n, N^{\cB}(\cT)= k_{n}(c) \right) & \sim \frac{1}{{2 \pi \sigma  \gamma_\cB}} \frac{1}{n^2}  \exp \left( - \frac{1}{\gamma_{\cB}^{2} } \frac{c^{2}}{2}\right).\label{eq:equivalent}
\end{align}
By using  \eqref{eq:NB},
we have
\[
\mathbb{P}(N^{\Z_+}(\cT)=n | N^{\cB}(\cT)=k_n(c)) \sim \frac{\mu_{\cB}}{\gamma_{\cB}\sqrt{2 \pi n}} \exp \left( - \frac{1}{\gamma_{\cB}^{2} } \frac{c^{2}}{2}\right).
\]
Then observe that for $y \in \R$, as $n,k \rightarrow \infty$, it is equivalent to write
$n={k}/{\mu_{\cB}}+ y \sqrt{k} + O(1)$ and  $k=n \mu_{\cB} - y \sqrt{n} \mu_{\cB}^{3/2} + O(1)$.
Hence
\begin{equation*}
\mathbb{P} \left( N^{\Z_+}(\cT)= \lfloor \frac{k}{\mu_\cB} + y \sqrt{k} \rfloor | N^{\cB}(\cT)=k \right) \sim \frac{\mu_{\cB}^{3/2}}{\gamma_{\cB}\sqrt{2 \pi k}} \exp \left( - \frac{\mu_{\cB}^{3}}{\gamma_{\cB}^{2} } \frac{y^{2}}{2}\right).
\end{equation*}
This completes the proof.
\end{proof}

We are now in position to establish Theorem \ref{thm:normality} (ii), which will be a consequence of the following estimate. 

\begin{lem}
\label{lem:normality}
Let $\cA, \cB \subset \Z_+$ such that the quantities $\mu_{\cA \cap \cB}$, $\mu_{\cA \backslash \cB}$, $\mu_{\cB \backslash \cA}$, $\mu_{\cA^c \cap \cB^c} $ are all positive.Then there exists $\sigma_{\cA,\cB}^2 > 0$, $C_{\cA,\cB} \in \R$ such that for fixed $u,v \in \R \cup \{+\infty, -\infty \}, u<v$ and $y \in \R$, we have, as $k \rightarrow \infty$,
\begin{align*}
\mathbb{P} \left( \left. \frac{N^{\cA}(\mathcal{T}^{\cB}_k) - k \frac{\mu_{\cA}}{\mu_{\cB}}}{\sqrt{k}} \in (u,v) \right| N^{\Z_+}(\mathcal{T}^{\cB}_k) = \lfloor k/\mu_{\cB} + \sqrt{k} y \rfloor \right) \sim  \frac{1}{\sqrt{2 \pi \sigma_{\cA,\cB}^2}} \displaystyle\int_{u}^{v} e^{-\frac{1}{2 \sigma_{\cA,\cB}^2}{\left(z- C_{\cA,\cB} y \right)^2}} dz.
\end{align*}
\end{lem}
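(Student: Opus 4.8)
The plan is to reduce the conditional law to one for a three-dimensional random walk and then to read it off from a multivariate local limit theorem. Throughout write $m=m_k=\lfloor k/\mu_\cB+\sqrt k\,y\rfloor$. Since $\mathcal{T}_k^\cB$ is a $\mu$-GW tree conditioned on $\{N^\cB=k\}$, conditioning it further on $\{N^{\Z_+}=m\}$ amounts to conditioning a nonconditioned $\mu$-GW tree $\cT$ on $\{N^{\Z_+}(\cT)=m,\ N^\cB(\cT)=k\}$. Running the cyclic lemma behind Proposition \ref{prop:joint} while keeping track of the two additive statistics $N^\cA$ and $N^\cB$ at once gives, for every $a\geq 0$,
\[
\mathbb{P}\big(N^\cA(\cT)=a,\,N^{\Z_+}(\cT)=m,\,N^\cB(\cT)=k\big)=\tfrac1m\,\mathbb{P}\big(S_m=-1,\,J_m^\cA=a,\,J_m^\cB=k\big),
\]
with $J^\cA,J^\cB$ built from $S$ as in Proposition \ref{prop:joint}. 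The factor $1/m$ cancels in the corresponding ratio, so that
\[
\mathbb{P}\big(N^\cA(\mathcal{T}_k^\cB)=a\ \big|\ N^{\Z_+}(\mathcal{T}_k^\cB)=m\big)=\mathbb{P}\big(J_m^\cA=a\ \big|\ S_m=-1,\,J_m^\cB=k\big).
\]

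Next I would apply Theorem \ref{thm:multivariatellt} in dimension three to the i.i.d.\ increments $\mathbf{Y}_i=(S_i-S_{i-1},\,\mathds{1}_{\{S_i-S_{i-1}+1\in\cA\}},\,\mathds{1}_{\{S_i-S_{i-1}+1\in\cB\}})$, whose partial sums are $(S_m,J_m^\cA,J_m^\cB)$ and whose mean is $(0,\mu_\cA,\mu_\cB)$. A direct computation gives the covariance matrix
\[
\Sigma=\begin{pmatrix}\sigma^2 & \mu_\cA m_\cA & \mu_\cB m_\cB\\ \mu_\cA m_\cA & \mu_\cA(1-\mu_\cA) & \mu_{\cA\cap\cB}-\mu_\cA\mu_\cB\\ \mu_\cB m_\cB & \mu_{\cA\cap\cB}-\mu_\cA\mu_\cB & \mu_\cB(1-\mu_\cB)\end{pmatrix}.
\]
Assuming $\Sigma$ positive definite (see below), the theorem yields, uniformly over the relevant window, the asymptotics of the numerator $\mathbb{P}(S_m=-1,J_m^\cA=a,J_m^\cB=k)$ as a three-dimensional Gaussian density. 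Dividing by the two-dimensional estimate for $\mathbb{P}(S_m=-1,J_m^\cB=k)$ --- which is exactly Lemma \ref{lem:asymp}(i) applied with first coordinate $0$ and $c=-y\mu_\cB^{3/2}$, and which coincides with the corresponding two-dimensional marginal of the Gaussian --- leaves a univariate Gaussian. Concretely, writing $a=m\mu_\cA+\sqrt m\,w$, the two conditioning coordinates rescaled are $-1/\sqrt m\to 0$ and $(k-m\mu_\cB)/\sqrt m\to -y\mu_\cB^{3/2}$, so the standard Gaussian conditioning formula shows that the conditional law of $w$ is Gaussian, with variance the Schur complement $V$ of the $(S,J^\cB)$ block of $\Sigma$ and mean an explicit linear function of $y$.

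It then remains to transfer this to the variable $z=(a-k\mu_\cA/\mu_\cB)/\sqrt k$. Using $m\mu_\cA=k\mu_\cA/\mu_\cB+\sqrt k\,y\mu_\cA+O(1)$ and $\sqrt m=\sqrt{k/\mu_\cB}\,(1+o(1))$, one gets $z=y\mu_\cA+w/\sqrt{\mu_\cB}+o(1)$, so that the event $\{z\in(u,v)\}$ corresponds to a window for $w$. Summing the conditional point masses over this window is a Riemann sum of spacing $1/\sqrt m$ that converges to the Gaussian integral, and the change of variables $w=\sqrt{\mu_\cB}(z-y\mu_\cA)$ turns the $w$-Gaussian into the stated $z$-Gaussian: this identifies $\sigma_{\cA,\cB}^2=V/\mu_\cB$ (positive since $V>0$) and reads $C_{\cA,\cB}$ off the conditional mean. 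For infinite endpoints $u$ or $v$ I would first truncate to a large compact window and bound the remaining tail using the approximate Gaussianity together with $\sum_a\mathbb{P}(J_m^\cA=a\mid\cdots)=1$.

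The main obstacle is the positive-definiteness of $\Sigma$, equivalently the strict positivity of $\sigma_{\cA,\cB}^2$. Degeneracy occurs exactly when $\mathds{1}_{\{X\in\cA\}}$ (with $X\sim\mu$) is an almost sure affine function of $(S_1,\mathds{1}_{\{X\in\cB\}})$, in which case the conditional law above collapses to a point mass rather than a Gaussian; this is precisely where the hypotheses $\mu_{\cA\cap\cB},\mu_{\cA\backslash\cB},\mu_{\cB\backslash\cA},\mu_{\cA^c\cap\cB^c}>0$ (together with the standing aperiodicity of $p_\cB$ and $p_{\cB^c}$, which also secures the aperiodicity of $\mathbf{Y}_1$ required by Theorem \ref{thm:multivariatellt}) must be invoked to guarantee $\det\Sigma>0$. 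Beyond this, the only delicate technical points are the uniformity of the local limit asymptotics over the scaling window and the passage to infinite endpoints.
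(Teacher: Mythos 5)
Your reduction to the walk and the subsequent computations (the covariance matrix, the identification $c=-y\mu_\cB^{3/2}$ for the conditioned coordinate, the change of variables $z=y\mu_\cA+w/\sqrt{\mu_\cB}+o(1)$) are correct, but your route is genuinely different from the paper's. The paper never applies a local limit theorem to the trivariate vector $(S,J^\cA,J^\cB)$. It first normalizes by the bivariate estimate \eqref{eq:equivalent}, writes the conditional probability as $\int_u^v g_k(h)\,dh$ for a point-probability density $g_k$, and then expands $g_k(h)=k^{5/2}\sum_\ell q_\ell$ over the possible values $\ell$ of $N^{\cA\cap\cB}(\cT)$, where $q_\ell$ is a joint point probability for the four cell counts $(N^{\cA\cap\cB},N^{\cA\setminus\cB},N^{\cB\setminus\cA},N^{\cA^c\cap\cB^c})$; the sum over $\ell$ outside a window of width $k^{3/5}$ is killed by Hoeffding's inequality, and inside the window a four-dimensional partition LLT (Lemma \ref{lem:asymp2} (ii)) is applied and the Gaussian approximations are summed. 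Your approach buys a shorter argument: conditioning a trivariate Gaussian directly produces $\sigma_{\cA,\cB}^2$ as a Schur complement and $C_{\cA,\cB}$ as a regression coefficient, whereas the paper only identifies these constants a posteriori by normalizing over $u=-\infty$, $v=+\infty$. What the paper's partition formulation buys is reusability: Lemma \ref{lem:asymp2} is invoked again for the joint convergence in Theorem \ref{thm:normality} (iii), and the four-cell decomposition is what lets the author locate the degenerate cases $\mu_{\cA\setminus\cB}=\mu_{\cB\setminus\cA}=0$ in part (ii).

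One point where you overclaim: you assert that the positivity of $\mu_{\cA\cap\cB},\mu_{\cA\setminus\cB},\mu_{\cB\setminus\cA},\mu_{\cA^c\cap\cB^c}$ guarantees $\det\Sigma>0$. It does not. If $\mu$ is supported on $\{0,1,3,4\}$ with $\cA=\{1,4\}$ and $\cB=\{3,4\}$, then $S_1=X-1=-1+\mathds{1}_{X\in\cA}+3\,\mathds{1}_{X\in\cB}$ almost surely, so your $3\times 3$ matrix is singular even though all four cell masses are positive and $p_\cB$, $p_{\cB^c}$ are aperiodic; the conditional law of $N^\cA$ given $N^{\Z_+}$ and $N^\cB$ is then a point mass, not a nondegenerate Gaussian. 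Degeneracy is governed not by the four cell masses but by whether $\mathds{1}_{X\in\cA}$ is an affine function of $(X,\mathds{1}_{X\in\cB})$ on $Supp(\mu)$; a clean sufficient condition ruling this out is $\# Supp(\mu)\geq 5$ together with your four positivity hypotheses. To be fair, the paper has the same blind spot: the positive definiteness asserted in Lemma \ref{lem:asymp2} fails for the same example (the $4\times 4$ matrix is singular whenever your $3\times 3$ one is), so this is a gap you share with the source rather than one you introduced. Apart from this, the remaining delicate steps you flag (uniformity over the scaling window, the Riemann-sum passage, truncation for infinite endpoints) are handled correctly.
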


\begin{proof}[Proof of Theorem \ref{thm:normality} (ii), using Lemma \ref{lem:normality}] First assume that the quantities $\mu_{\cA \cap \cB}$, $\mu_{\cA \backslash \cB}$, $\mu_{\cB \backslash \cA}$, $\mu_{\cA^c \cap \cB^c} $ are all positive.
Fix $u<v$. For $y \in \mathbb{R}$ and $k \in \Z_+$, set \[f_k(y) =  \mathbb{P} \left(  \frac{N^{\cA}(\mathcal{T}^{\cB}_k) - k \frac{\mu_{\cA}}{\mu_{\cB}}}{\sqrt{k}} \in (u,v) , N^{\Z_+}(\mathcal{T}^{\cB}_k) = \lfloor k/\mu_{\cB} + \sqrt{k} y \rfloor \right) \sqrt{k}\]
and remark that
$
\mathbb{P} ( ({N^{\cA}(\mathcal{T}^{\cB}_k) - k \frac{\mu_{\cA}}{\mu_{\cB}}})/{\sqrt{k}} \in (u,v) ) = \int_{\mathbb{R}}  f_{k}(y) dy$. Also, for $y,z \in \mathbb{R}$ define $g(y,z)$ by
\[g(y,z) = \frac{1}{\sqrt{2 \pi \gamma^2}} e^{-\frac{y^2}{2 \gamma^2}} \frac{1}{\sqrt{2 \pi \sigma_{\cA,\cB}^2}}  e^{-\frac{1}{{2 \sigma_{\cA,\cB}^2}}{\left(z-C_{\cA,\cB}y \right)^2}}.\]
where $\gamma^2 = {\gamma_\cB^2}/{\mu_\cB^3}$.
Observe that $\int_{\mathbb{R}^2} g(y,z)dy dz=1$.
Then, by Proposition \ref{loool} and Lemma \ref{lem:normality}, $f_k(y)$ converges pointwise, as $k \rightarrow \infty$, to
$\int_u^v g(y,z) dz$. Hence, by Fatou's lemma and Fubini-Tonnelli's theorem,
\[\underset{k \rightarrow \infty}{\liminf} \, \mathbb{P} \left( \frac{N^{\mathcal{A}}(\cT^\mathcal{B}_k) - k \frac{\mu_{\mathcal{A}}}{\mu_{\mathcal{B}}}}{\sqrt{k}} \in (u,v) \right) \geq \int_u^v \left[ \int_{\R} g(y,z) dy \right] dz.
\]
By Portmanteau theorem, if $(X_k)$ is a sequence of real-valued random variables such that for every $u<v$, $\liminf_{k \to \infty} \mathbb{P}(u < X_k < v) \geq \mathbb{P}(u < X < v)$ for a certain random variable $X$, then $X_n$ converges in distribution to $X$. This implies that
\begin{align*}
\mathbb{P}  \left( \frac{N^{\mathcal{A}}(\cT^\mathcal{B}_k) - k \frac{\mu_{\mathcal{A}}}{\mu_{\mathcal{B}}}}{\sqrt{k}} \in (u,v) \right) & \rightarrow 
\displaystyle\int_{u}^{v} \left[ \displaystyle\int_{\mathbb{R}} \frac{1}{\sqrt{2 \pi \gamma^2}} e^{-\frac{y^2}{2 \gamma^2}} \frac{1}{\sqrt{2 \pi \sigma_{\cA,\cB}^2}}  e^{-\frac{1}{2 \sigma_{\cA,\cB}^2}{\left(z-C_{\cA,\cB}y \right)^2}} dy \right]dz\\
&=  \int_u^v\frac{1}{\sqrt{2 \pi \delta_{\cA,\cB}^2 }} e^{- \frac{1}{2 \delta_{\cA,\cB}^2} z^2} dz
\end{align*} 
with $\delta_{\cA,\cB}^2=C_{\cA,\cB}^2 \gamma^2 + \sigma_{\cA,\cB}^2$>0. We leave the case where at least one of the quantities $\mu_{\cA \cap \cB}$, $\mu_{\cA \backslash \cB}$, $\mu_{\cB \backslash \cA}$, $\mu_{\cA^c \cap \cB^c}$ is $0$ to the reader, which is treated in the same way. In particular, one gets that $\delta_{\cA,\cB}^2>0$ except when $\mu_\cA = 0$ or $\mu_{\cA \backslash \cB} = \mu_{\cB \backslash \cA} = 0$. This establishes the asymptotic normality of $(N^{\cA}(\cT) | N^{\cB}(\cT) = k)$ with an expression of the limiting variance. 
\end{proof}

The proof of Lemma \ref{lem:normality} is based on the following result, whose proof is a direct adaptation of the proof of Lemma \ref{lem:asymp} in the multivariate setting.

\begin{lem}
\label{lem:asymp2}
Fix $a \in \R$, and let $\left( \cB_1, \ldots, \cB_j \right)$ be a partition of $\Z_+$, satisfying, for all $i \in \llbracket 1,j \rrbracket, \, \mu_{\cB_i} > 0$. Assume in addition that at least one of the laws $p_{\cB_1}, \ldots, p_{\cB_j}$ is aperiodic. For $1 \leq i \leq j$ and $c_i \in \R$, define $n_i(c_i) := \lfloor n \mu_{\cB_i} + c_i \sqrt{n} \rfloor$. Then there exists a symmetric positive definite matrix ${\Sigma} := {\Sigma} \left( \cB_1, \ldots, \cB_j \right) \in \mathcal{S}_j\left( \R \right)$ such that the following assertions hold, uniformly for $\left( c_1, \ldots, c_j \right)$ in a compact subset of $\R^j$ satisfying in addition $\sum_{i=1}^j n_i(c_i)=n$:
\begin{itemize}
\item[(i)]
Let $(a_{n})$ be a sequence of integers such that $a_{n}/\sqrt{n} \rightarrow a$. Then, as $n \rightarrow \infty$,
\begin{align*}
\P \left(S_n=a_n, J_n^{\cB_1} = n_1(c_1), \ldots, J_n^{\cB_j} = n_j(c_j) \right) \sim \frac{1}{\left(2 \pi n \right)^{j/2} \sqrt{\det {\bf \Sigma}}} e^{-\frac{1}{2} ^t\mathbf{x} \Sigma^{-1} \mathbf{x}},
\end{align*}
where ${\bf x} = \left( a, c_1, \ldots, c_{j-1} \right)$.
\item[(ii)]
With the same notations, as $n \rightarrow \infty$, we have
\begin{align*}
\P \left(N^{\Z_+}(\cT)=n, N^{\cB_1}(\cT) = n_1(c_1), \ldots, N^{\cB_j}(\cT) = n_j(c_j) \right) \sim \frac{1}{n} \frac{1}{\left(2 \pi n \right)^{j/2} \sqrt{\det {\bf \Sigma}}} e^{-\frac{1}{2} ^t\bf{x} \Sigma^{-1} \bf{x}},
\end{align*}
where, here, ${\bf x} = \left(0, c_1, \ldots, c_{j-1} \right)$.
\end{itemize}
\end{lem}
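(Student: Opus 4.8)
The plan is to mimic the proof of Lemma~\ref{lem:asymp}, now in dimension $j$, applying the multivariate local limit theorem (Theorem~\ref{thm:multivariatellt}). The key structural observation is that, because $(\cB_1, \ldots, \cB_j)$ is a \emph{partition} of $\Z_+$, at each step exactly one of the indicators $\mathds{1}_{S_m - S_{m-1}+1 \in \cB_i}$ equals $1$; summing over steps gives the deterministic identity $\sum_{i=1}^j J_n^{\cB_i} = n$. Consequently the $(j+1)$-dimensional vector $(S_n, J_n^{\cB_1}, \ldots, J_n^{\cB_j})$ is supported on an affine hyperplane and cannot be aperiodic, so I would discard the last coordinate: under the constraint $\sum_{i=1}^j n_i(c_i) = n$, the event $\{J_n^{\cB_j} = n_j(c_j)\}$ is automatically implied by $\{J_n^{\cB_i} = n_i(c_i),\ 1 \le i \le j-1\}$. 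I would therefore apply Theorem~\ref{thm:multivariatellt} to the i.i.d. sequence $\mathbf{Y}_m := (S_m - S_{m-1},\, \mathds{1}_{S_m - S_{m-1}+1 \in \cB_1}, \ldots, \mathds{1}_{S_m - S_{m-1}+1 \in \cB_{j-1}}) \in \Z^j$.

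First I would compute the mean and covariance of $\mathbf{Y}_1$. Since $\mu$ is critical, $\E[S_1]=0$, so the mean is $\mathbf{M} = (0, \mu_{\cB_1}, \ldots, \mu_{\cB_{j-1}})$. Writing $\mathds{1}_i := \mathds{1}_{S_1+1 \in \cB_i}$, the covariance matrix $\Sigma \in \mathcal{S}_j$ has top-left entry $\mathrm{Var}(S_1)=\sigma^2$, couplings $\mathrm{Cov}(S_1,\mathds{1}_i) = \sum_{k \in \cB_i}(k-1)\mu_k = \mu_{\cB_i} m_{\cB_i}$ (using $\E[S_1]=0$), diagonal terms $\mathrm{Var}(\mathds{1}_i) = \mu_{\cB_i}(1-\mu_{\cB_i})$, and, crucially because the $\cB_i$ are disjoint, off-diagonal terms $\mathrm{Cov}(\mathds{1}_i,\mathds{1}_{i'}) = -\mu_{\cB_i}\mu_{\cB_{i'}}$ for $i \ne i'$. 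This defines the matrix $\Sigma(\cB_1, \ldots, \cB_j)$ of the statement.

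The two genuine verifications are the hypotheses of Theorem~\ref{thm:multivariatellt}: aperiodicity of $\mathbf{Y}_1$ and positive definiteness of $\Sigma$; both are where the assumption that some $p_{\cB_{i_0}}$ is aperiodic enters. For positive definiteness I would show that any almost sure linear relation $\lambda_0 S_1 + \sum_{i=1}^{j-1}\lambda_i \mathds{1}_i = \mathrm{const}$ is trivial: restricted to $\cB_{i_0}\cap \mathrm{Supp}(\mu)$, which contains at least two points since $p_{\cB_{i_0}}$ is aperiodic, the relation reads $\lambda_0(k-1) = \mathrm{const}$, forcing $\lambda_0 = 0$; substituting back and using $\mu_{\cB_i}>0$ together with the partition (in particular that $\cB_j$ carries mass) then forces all $\lambda_i=0$. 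Aperiodicity of $\mathbf{Y}_1$ as a $j$-dimensional lattice variable follows likewise from the aperiodicity of $\mu$ (first coordinate) and of $p_{\cB_{i_0}}$, exactly as in the two-dimensional case. I expect this degeneracy analysis to be the main obstacle, being the only step that is not a formal transcription of the $j=2$ argument.

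Finally I would conclude. Setting $\mathbf{T_n} = n^{-1/2}\big(\sum_{m=1}^n \mathbf{Y}_m - n\mathbf{M}\big)$, the event in (i) with $a_n/\sqrt{n}\to a$ and $J_n^{\cB_i}=n_i(c_i)$ corresponds to $\mathbf{T_n}$ hitting a lattice point converging to $\mathbf{x} = (a, c_1, \ldots, c_{j-1})$, and Theorem~\ref{thm:multivariatellt} yields the stated Gaussian density, the $o(n^{-j/2})$ error being negligible against the main term uniformly on compact sets (where the density is bounded below); positivity of the relevant probabilities for large $n$ is checked as in Lemma~\ref{lem:asymp}. Assertion (ii) then follows from (i) by taking $a_n=-1$ (so $a=0$ and $\mathbf{x}=(0,c_1,\ldots,c_{j-1})$) and invoking the multivariate form of Proposition~\ref{prop:joint}, namely $\P(N^{\Z_+}(\cT)=n,\, N^{\cB_i}(\cT)=n_i(c_i)\ \forall i) = \frac{1}{n}\P(S_n=-1,\, J_n^{\cB_i}=n_i(c_i)\ \forall i)$, which accounts for the extra factor $1/n$.
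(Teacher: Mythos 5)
Your proposal is correct and follows exactly the route the paper intends: the paper gives no written proof of Lemma~\ref{lem:asymp2} beyond the remark that it is ``a direct adaptation of the proof of Lemma~\ref{lem:asymp} in the multivariate setting,'' i.e.\ an application of Theorem~\ref{thm:multivariatellt} followed by the (multivariate) cyclic lemma of Proposition~\ref{prop:joint}. You correctly supply the details the paper leaves implicit --- dropping the last coordinate because of the identity $\sum_{i=1}^j J_n^{\cB_i}=n$ (which is why $\mathbf{x}$ has only $j$ entries and $\Sigma\in\mathcal{S}_j$), and using the aperiodicity of some $p_{\cB_{i_0}}$ to rule out both periodicity of $\mathbf{Y}_1$ and degeneracy of $\Sigma$ --- so nothing is missing.
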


\begin{rk}
For convenience, as before, we state here the theorem in the aperiodic case. Remark however that the case where none of the laws $p_{\cB_1}, \ldots, p_{\cB_j}$ are aperiodic boils down to the aperiodic case, up to a change of variables.
\end{rk}

\begin{proof}[Proof of Lemma \ref{lem:normality}] Let us fix $y \in \R$.
First, write
\begin{align*}
\mathbb{P} &\left( \left. \frac{N^{\mathcal{A}}(\cT^\mathcal{B}_k) - k \frac{\mu_{\mathcal{A}}}{\mu_{\mathcal{B}}}}{\sqrt{k}} \in (u,v) \right| N^{\Z_+}(\cT^\mathcal{B}_k) = \lfloor k/\mu_{\mathcal{B}} + \sqrt{k} y \rfloor \right) \\
& \qquad= \frac{\mathbb{P} \left( \frac{N^{\mathcal{A}}(\cT) - k \frac{\mu_{\mathcal{A}}}{\mu_{\mathcal{B}}}}{\sqrt{k}} \in (u,v), N^\mathcal{B}(\cT)=k, N^{\Z_+}(\cT) = \lfloor k/\mu_{\mathcal{B}} + \sqrt{k} y \rfloor \right)}{\mathbb{P} \left( N^\mathcal{B}(\cT)=k, N^{\Z_+}(\cT) = \lfloor k/\mu_{\mathcal{B}} + \sqrt{k} y \rfloor \right)}\\
& \qquad \sim C(y)  k^{5/2} \int_u^v  \mathbb{P} \left( N^{\mathcal{A}}(\cT) = \lfloor k \frac{\mu_{\mathcal{A}}}{\mu_{\mathcal{B}}} + \sqrt{k} h \rfloor , N^\mathcal{B}(\cT)=k, N^{\Z_+}(\cT) = \lfloor k/\mu_{\mathcal{B}} + \sqrt{k} y \rfloor \right) dh,
\end{align*}
where the last asymptotic equivalent follows from \eqref{eq:equivalent} with $C(y) = \frac{2 \pi \sigma \gamma_\cB}{\mu_\cB^2} \exp \left( \frac{y^2 \mu_\cB^3}{2 \gamma_\cB^2} \right)$. In order to prove that this quantity has a limit as $k \to \infty$ and compute it, it is enough to prove that the map $g_k$ defined by
\begin{align*}
g_k(h)= k^{5/2} \mathbb{P} \left(  N^{\mathcal{A}}(\cT) = \lfloor k \frac{\mu_{\mathcal{A}}}{\mu_{\mathcal{B}}} + \sqrt{k} h \rfloor , N^\mathcal{B}(\cT)=k, N^{\Z_+}(\cT) = \lfloor k/\mu_{\mathcal{B}} + \sqrt{k} y \rfloor \right)
\end{align*}
converges uniformly on $(u,v)$ to an integrable function on $(u,v)$.

Remark that we can write $g_k(h) = k^{5/2} \sum_{\ell \in \Z_+} q_\ell$, where

\begin{align*}
q_\ell &= \mathbb{P} \left( N^{\mathcal{A} \cap \mathcal{B}}(\cT) = \ell,  N^{\mathcal{A} \backslash \mathcal{B}}(\cT) = \lfloor k \frac{\mu_{\mathcal{A}}}{\mu_{\mathcal{B}}} + \sqrt{k} h \rfloor - \ell , N^{\mathcal{B} \backslash \mathcal{A}}(\cT)=k - \ell, \right. \\
&\qquad \qquad \qquad \qquad  \left. N^{\cA^c \cap \cB^c}(\cT) = \lfloor k/\mu_{\mathcal{B}} + \sqrt{k} y \rfloor - \lfloor k \frac{\mu_\cA}{\mu_\cB} + \sqrt{k}h \rfloor -k+\ell \right).
\end{align*} 
In other words, we sum over all possible values $\ell$ of $N^{\cA \cap \cB}(\cT)$. The idea is that, if $\ell$ is far from its expectation (namely, $k \mu_{\cA \cap \cB} / \mu_\cB$), then $q_\ell$ is small. On the other hand, we control $q_\ell$ by Lemma \ref{lem:asymp2} when $\ell$ is close to its expectation.
More specifically, set
\begin{align*}
I_k(h) := \left\{ \ell \in \Z_+; \left( \ell, \lfloor k \frac{\mu_{\mathcal{A}}}{\mu_{\mathcal{B}}} + \sqrt{k} h \rfloor - \ell, k-\ell, \lfloor \frac{k}{\mu_\cB} + \sqrt{k}y \rfloor - \lfloor k \frac{\mu_\cA}{\mu_\cB} + \sqrt{k}h \rfloor -k+\ell \right) \in \mathbb{C}_k,  \right\},
\end{align*}
where $$\mathbb{C}_k = \left\{ k \frac{\mu_{\cA \cap \cB}}{\mu_\cB}, k \frac{\mu_{\cA \backslash \cB}}{\mu_\cB}, k \frac{\mu_{\cB \backslash \cA}}{\mu_\cB}, k \frac{\mu_{\cA^c \cap \cB^c}}{\mu_\cB} \right\} + \left[-k^{3/5}, k^{3/5}\right]^4.$$

First, remark that, for $\ell \in \Z_+$, $q_\ell \leq \sum_{i=1}^4 \P(N^{\cA_i}(\cT)=\ell \big| N^{\Z_+}(\cT)=\lfloor \frac{k}{\mu_\cB} + \sqrt{k}y \rfloor)$, where $(\cA_1, \cA_2, \cA_3, \cA_4) := (\cA \cap \cB, \cA \backslash \cB, \cB \backslash \cA, \cA^c \cap \cB^c)$.  Therefore,
\begin{align*}
\sum_{\ell \notin I_k(h)} q_\ell &\leq \sum_{i=1}^4 \P \left( \left| N^{\cA_i}(\cT) - \frac{k \mu_{\cA_i}}{\mu_\cB} \right| \geq k^{3/5} \big| N^{\Z_+}(\cT)=\lfloor \frac{k}{\mu_\cB} + \sqrt{k}y \rfloor \right)\\
&= \sum_{i=1}^4 \P \left( |B_i-\E[B_i]| \geq k^{3/5} \right) \left(1+o(1)\right),
\end{align*}
where $B_i \sim Bin (\lfloor k/\mu_\cB \rfloor, \mu_{\cA_i})$.
Thus, using Hoeffding inequality, we get:
\begin{equation}
\label{eq:1.1}
\sum_{\ell \notin I_k(h)} q_\ell = oe(k)
\end{equation}
uniformly in $h \in \R$.

On the other hand, by Lemma \ref{lem:asymp2} (ii), there exists an invertible matrix $\Sigma \in \mathcal{S}_{4}(\R)$ and a constant $C_1 > 0$ such that, uniformly for $h \in \R$,
\begin{equation}
\label{eq:1.2}
\sum_{\ell \in I_k(h)} q_\ell \sim \sum_{\ell \in I_k(h)} C_1 k^{-3} e^{-\frac{1}{2} ^tx_\ell \Sigma^{-1} x_\ell}
\end{equation}
for $x_\ell := (({\ell-k\frac{\mu_{\cA \cap \cB}}{\mu_\cB}})/{\sqrt{k}},y,h,0)$.

By Equations \eqref{eq:1.1} and \eqref{eq:1.2}, by summing over all $\ell \in \Z_+$, we get that, as $k \rightarrow \infty$, uniformly in $h \in \R$, $g_k(h) \rightarrow C_2 \exp \left( -B_{\cA,\cB} \left( h-C_{\cA,\cB}y \right)^2 \right)$ for a certain $C_2$ depending on $\cA, \cB$, and some constants $B_{\cA,\cB},C_{\cA,\cB}$ depending on $\cA$ and $\cB$. Since this limiting function is integrable, by uniform convergence, for any $u,v \in \R \cup \{ +\infty, -\infty \}$, 
\begin{align*}
 \mathbb{P} &\left( \left. \frac{N^{\mathcal{A}}(\cT^\mathcal{B}_k) - k \frac{\mu_{\mathcal{A}}}{\mu_{\mathcal{B}}}}{\sqrt{k}} \in (u,v) \right| N^{\Z_+}(\cT^\mathcal{B}_k) = \lfloor \frac{k}{\mu_{\cB}} + \sqrt{k} y \rfloor \right)  \underset{k \rightarrow \infty}{\longrightarrow} \tilde{C}(y) \int_u^v e^{ - B_{\cA,\cB} \left( h- C_{\cA,\cB} y\right)^2} dh,
\end{align*}
where $ \tilde{C}(y)$ is a constant only depending on $y$ (and $\cA, \cB$).
By taking $u=-\infty$ and $v=+\infty$, one sees  that $\tilde{C}(y)$ does not depend on $y$. Hence, there exists $\sigma_{\cA,\cB}^2>0$ such that, for any $y \in \R$, $\tilde{C}(y)={1}/{\sqrt{2\pi \sigma_{\cA,\cB}^2}}$. Furthermore, by taking again $u=-\infty$ and $v=+\infty$, the value of the right hand side is  $1$, which tells us that $B_{\cA,\cB}=\frac{1}{2\sigma_{\cA,\cB}^2}$. Finally, we conclude that for every $y \in \R$ and $u<v$:

\begin{align*}
 \mathbb{P} &\left( \left. \frac{N^{\mathcal{A}}(\cT^\mathcal{B}_k) - k \frac{\mu_{\mathcal{A}}}{\mu_{\mathcal{B}}}}{\sqrt{k}} \in (u,v) \right| N^{\Z_+}(\cT^\mathcal{B}_k) = \lfloor \frac{k}{\mu_{\cB}}  + \sqrt{k} y \rfloor \right) \underset{k \rightarrow \infty}{\longrightarrow} 
 \frac{1}{\sqrt{2\pi \sigma_{\cA,\cB}^2}} \int_u^v e^{-\frac{1}{2 \sigma_{\cA,\cB}^2}{\left( h-C_{\cA,\cB}y\right)^2}} dh
\end{align*}
which completes the proof of Lemma \ref{lem:normality}.
\end{proof}

Finally, we briefly present the proof of Theorem \ref{thm:normality} (iii), which is based again on Lemma \ref{lem:asymp2} (ii).

Let us consider the tree $\cT_k^\cB$ for a certain $\cB \subset \Z_+$. Let $\cA_1, \ldots\cA_j \subset \Z_+$. It induces a partition of $\Z_+$ made of the set $E := \left\{ \cap_{i=1}^{j+1} \mathcal{C}_i, \mathcal{C}_i \in \left\{ \cA_i, \cA_i^c \right\}, \mathcal{C}_{j+1} \in \left\{ \cB, \cB^c \right\} \right\} \backslash \{ \emptyset \}$. Let $(u_i,v_i)_{1 \leq i \leq j}$ be real numbers with $u_i<v_i$ for every $1 \leq i \leq j$. Then
\begin{align*}
\P &\left( \frac{N^{\cA_1}(\cT_k^\cB)-k\frac{\mu_{\cA_1}}{\mu_\cB}}{\sqrt{k}} \in (u_1,v_1), \ldots, \frac{N^{\cA_j}(\cT_k^\cB)-k\frac{\mu_{\cA_j}}{\mu_\cB}}{\sqrt{k}} \in (u_j,v_j) \right) \\
&= \sum_{n \in \Z_+} \frac{\P \left( N^{\Z_+}(\cT)=n \right)}{\P \left( N^\cB(\cT)=k \right)} \\
& \qquad \qquad  \times \P \left( \frac{N^{\cA_1}(\cT_n^{\Z_+})-k\frac{\mu_{\cA_1}}{\mu_\cB}}{\sqrt{k}} \in (u_1,v_1), \ldots, \frac{N^{\cA_j}(\cT_n^{\Z_+})-k\frac{\mu_{\cA_j}}{\mu_\cB}}{\sqrt{k}} \in (u_j,v_j), N^\cB(\cT_n^{\Z_+})=k \right)\\
&= \sum_{n \in \Z_+} \frac{\P \left( N^{\Z_+}(\cT)=n \right)}{\P \left( N^\cB(\cT)=k \right)} \sum_{\left( x_{\mathcal{H}} \right)_{\mathcal{H} \in E} \in I_n}  \P \left( \underset{\mathcal{H} \in E}{\cap} N^{\mathcal{H}}\left( \cT_n^{\Z_+} \right) = x_{\mathcal{H}} \right)
\end{align*}
for some finite set $I_n \in \Z_+^{|E|}$.
We can now rewrite this probability in terms of random walks and use Lemma \ref{lem:asymp2} (ii) in order to get the asymptotic normality of the quantity 
\begin{align*}
\P \left( \frac{N^{\cA_1}(\cT_k^\cB)-k\frac{\mu_{\cA_1}}{\mu_\cB}}{\sqrt{n}} \in (u_1,v_1), \ldots, \frac{N^{\cA_j}(\cT_k^\cB)-k\frac{\mu_{\cA_j}}{\mu_\cB}}{\sqrt{n}} \in (u_j,v_j) \right).
\end{align*}

\begin{ex}
\label{ex:example}
In explicit cases, it is possible to carry out the calculations in the proof of Theorem \ref{thm:normality} to compute the value of $\delta_{\cA,\cB}$ and of the covariances. We give several examples:
\begin{enumerate}
\item[--] In the case $\cB=\Z_{+}$ and $ \mathcal{A}= \{r\}$ with $r \geq 1$ (which was treated by \cite{Jan16}), one has $\delta_{\cA,\cB}^{2}= \mu_{r}(1-\mu_{r})-(r-1)^{2} \mu_{r}^{2}/\sigma^{2}$ and the covariance between the limiting Gaussian random variables for $\cA_{1}= \{r\}$ and $\cA_{2}= \{s\}$ is $-\mu_{r} \mu_{s}-(r-1)(s-1) \mu_{r} \mu_{s}/\sigma^{2}$.
\item[--] In the case $\cB= \{a\}$ for some $a \in \Z_+$ and $\mathcal{A}= \{r\}$, one has $\delta_{\cA,\cB}^{2}=\frac{\mu_{r}}{\mu_{a}} ( 1+\frac{\mu_{r}}{\mu_{a}} ) -  \frac{(r-a)^{2} \mu_r^2}{\mu_a \sigma^{2}}$ and the covariance between the limiting Gaussian random variables for $\cA_{1}= \{r\}$ and $\cA_{2}= \{s\}$ is $ \frac{\mu_r \mu_s}{\mu_a^2} \left( 1- (r-a) (s-a) \frac{\mu_{a}}{\sigma^{2}}\right)$.
\item[--] In particular, in the case $\cB= \{0\}$ (this corresponds to conditioning by a fixed number of leaves, and is useful in the study of dissections \cite{Kor14}) and $\mathcal{A}= \{r\}$, one has $\delta_{\cA,\cB}^{2}=\frac{\mu_{r}}{\mu_{0}} ( 1+\frac{\mu_{r}}{\mu_{0}} ) -  \frac{r^{2} \mu_r^2}{\mu_0 \sigma^{2}}$ and the covariance between the limiting Gaussian random variables for $\cA_{1}= \{r\}$ and $\cA_{2}= \{s\}$ is $ \frac{\mu_r \mu_s}{\mu_0^2}\left( 1- r s  \frac{\mu_{0} }{\sigma^{2}}\right)$.
\item[--] In the case $\cB= \{0\}$ and $\cA=\mathbb{Z}_{+}$, by \eqref{eq:NZ}, $\delta_{ \mathbb{Z}_{+}, \{0\} }^{2}= \frac{1-\mu_{0}}{\mu_{0}^{2}}- \frac{1}{\mu_{0} \sigma^{2}}$.
\end{enumerate}
\end{ex}

\begin{rk}Using the same arguments as in the end of this Section, it is possible to show that convergences of the exploration processes in Theorem \ref{thm:processus} hold jointly for $\cA_1, ..., \cA_k \subset \Z_{+}$ (with correlated Brownian motions), and to extend the results with $\cT_n$ replaced with $\cT^{\cB}_n$.
\end{rk}

\section{Several extensions}
\label{ss:ext}

We now present some possible extensions of Theorems \ref{thm:processus} and \ref{thm:normality} for other types of offspring distributions. A natural one is the extension of these results to distributions $\mu$ that are said to be in the domain of attraction of a stable law. We first properly define this notion, before explaining how the two abovementioned theorems can be generalized in this broader framework. The second extension that we present is the case of subcritical non-generic laws, where the offspring distribution is not critical anymore. In this case, we asymptotically observe in the random tree $\cT_n$ a condensation phenomenon, where one vertex has macroscopic degree. See e.g. \cite[Example $19.33$]{Jan12} for more context.

\subsection{Stable offspring distributions.} 

Let us first provide some background. 
We say that a function $L: \R_+^* \rightarrow \R_+^*$ is slowly varying if, for any $c>0$, $L(cx)/L(x) \rightarrow 1$ as $x \rightarrow \infty$. For $\alpha \in (1,2]$, we say that a critical distribution $\mu$ belongs to the domain of attraction of an $\alpha$-stable law if either $\mu$ has finite variance (in which case $\alpha=2$) or there exists a slowly varying function $L$ such that
\begin{equation}
\label{eq:L}
Var \left( X \mathds{1}_{X \leq x} \right) \underset{x \rightarrow \infty}{\sim} x^{2-\alpha} L(x),
\end{equation}
where $X$ is a random variable of law $\mu$.
In this case, for any sequence $(D_n)_{n \geq 1}$ of positive numbers satisfying
\begin{equation}
\label{eq:Dn}
\frac{n L(D_n)}{D_n^\alpha} \underset{n \rightarrow \infty}{\rightarrow} \frac{\alpha(\alpha-1)}{\Gamma(3-\alpha)},
\end{equation}
we have the following joint convergence:

\begin{thm}
\label{thm:stableprocessus}
Let $\alpha \in (1,2]$ and $\mu$ a critical distribution with infinite variance in the domain of attraction of an $\alpha$-stable law. Let $(D_n)_{n \geq 1}$ be a sequence satisfying \eqref{eq:Dn}. Then, there exists two nondegenerate random processes $X^{(\alpha)}, H^{(\alpha)}$, depending only on $\alpha$, such that the following convergences hold jointly:

\begin{itemize}
\item[(i)] We have
\begin{align*}
\left( \frac{W_{nt}(\mathcal{T}_n)}{D_n} , \frac{K^{\cA}_{nt}(\cT_n)-nt \mu_\cA}{\sqrt{n}} \right) _{0 \leq t \leq 1}  \quad \mathop{\longrightarrow}^{(d)}_{n \rightarrow \infty} \quad 
\left( X^{(\alpha)}_t, \sqrt{\mu_\cA(1-\mu_\cA)} B_t \right) _{0 \leq t \leq 1}.
\end{align*}

\item[(ii)] The following convergence holds in distribution, jointly with that of (i):
\begin{align*}
\left( \frac{D_n}{n} C_{2nt}(\mathcal{T}_n),  \frac{N^{\cA}_{2nt}(\mathcal{T}_n) - nt \mu_\cA}{\sqrt{n}}  \right)_{0 \leq t \leq 1} 
 \quad \mathop{\longrightarrow}^{(d)}_{n \rightarrow \infty} \quad 
\left( H^{(\alpha)}_t, \sqrt{\mu_\cA(1-\mu_\cA)} {B}_t \right) _{0 \leq t \leq 1}.
\end{align*}
\end{itemize}
Here, $B$ denotes is a standard Brownian motion independent of $(X^{(\alpha)},H^{(\alpha)})$.
\end{thm}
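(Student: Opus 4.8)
The plan is to follow the same three-step scheme as in the proof of Theorem~\ref{thm:processus}: first establish an unconditioned joint convergence for the walk $(S_i)$ together with its counting process $(J^\cA_i)$, then pass to the bridge version conditioned on $\{S_n=-1\}$ by an absolute continuity argument, and finally apply the Vervaat transform to reach the excursion. The essential new feature, compared with the finite variance case, is that $W_{nt}(\cT_n)$ now lives at scale $D_n$ while $K^\cA_{nt}(\cT_n)$ still fluctuates at scale $\sqrt n$, with $D_n/\sqrt n\to\infty$. As a consequence the two limiting objects decouple and the drift term $\frac{\mu_\cA m_\cA}{\sigma}\mathbbm{e}_t$ of Theorem~\ref{thm:processus} disappears, being replaced by the pure Brownian limit $\sqrt{\mu_\cA(1-\mu_\cA)}\,B_t$.

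For the unconditioned statement I would prove
\[
\left(\frac{S_{nt}}{D_n},\ \frac{J^\cA_{nt}-nt\mu_\cA}{\sqrt n}\right)_{0\le t\le 1}\quad\mathop{\longrightarrow}^{(d)}_{n \rightarrow \infty}\quad \left(\mathcal{X}^{(\alpha)}_t,\ \sqrt{\mu_\cA(1-\mu_\cA)}\,B_t\right)_{0\le t\le 1},
\]
jointly, where $\mathcal{X}^{(\alpha)}$ is the spectrally positive $\alpha$-stable Lévy process (the functional stable limit of $S/D_n$, whose normalized excursion is $X^{(\alpha)}$) and $B$ is an independent Brownian motion. The two marginals are classical: the first is the functional stable limit theorem and the second is Donsker's theorem applied to the centered i.i.d.\ indicators $\mathbbm{1}_{S_i-S_{i-1}+1\in\cA}-\mu_\cA$, which have variance $\mu_\cA(1-\mu_\cA)$. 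The crux is asymptotic independence, which I would obtain by a big-jump decomposition: fix $\varepsilon>0$ and split the increments according to whether they exceed $\varepsilon D_n$. By regular variation of the tail of $\mu$ and \eqref{eq:Dn}, only $O_{\P}(1)$ increments exceed $\varepsilon D_n$, and each contributes at most $1$ to $J^\cA$; hence these large jumps affect $J^\cA-n\mu_\cA$ by $o_{\P}(\sqrt n)$, so the whole $\sqrt n$-fluctuation of $J^\cA$ comes from the increments below $\varepsilon D_n$. Conversely, conditionally on the locations and sizes of the finitely many large jumps, the small increments are i.i.d.\ and the corresponding counting fluctuation still obeys a central limit theorem, whose limiting variance tends to $\mu_\cA(1-\mu_\cA)$ as $\varepsilon\to0$ and which does not depend on the large-jump configuration; letting $\varepsilon\to0$ recovers $\mathcal{X}^{(\alpha)}$ from the large jumps and yields independence. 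I expect this decoupling to be the main obstacle, since it must be made uniform in the Skorokhod sense and must also cover unbounded $\cA$, where the increment/indicator covariance $\sum_{i\in\cA}(i-1)\mu_i$ need not even be finite (so a covariance computation does not suffice and the big-jump argument is genuinely needed).

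Granting this, the bridge version conditioned on $\{S_n=-1\}$ follows exactly as in Lemma~\ref{lem:pont}: fix $u\in(0,1)$, rewrite the conditional expectation with the factor $\phi_{n-\lfloor nu\rfloor}(-S_{\lfloor nu\rfloor}-1)/\phi_n(-1)$, and use the local limit theorem for stable walks (see \cite{IL71}), $\phi_n(k)\sim D_n^{-1}p(k/D_n)$, to replace it by the ratio of stable densities $p_{1-u}(-S_{\lfloor nu\rfloor}/D_n)/p_1(0)$. Since this factor is a function of $S_{\lfloor nu\rfloor}/D_n$ alone, which is asymptotically independent of the counting fluctuation, the reweighting turns $\mathcal{X}^{(\alpha)}$ into the normalized $\alpha$-stable bridge while leaving the Brownian part $\sqrt{\mu_\cA(1-\mu_\cA)}\,B$ untouched and independent; tightness on $[u,1]$ is handled by time reversal as in Lemma~\ref{lem:pont}. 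The Vervaat transform then carries over verbatim: writing $\tau$ for the argmin of the stable bridge, the shift sends the bridge to $X^{(\alpha)}$ (Duquesne \cite{Duq03}), while $B^{(\tau)}$ is again a Brownian motion independent of $X^{(\alpha)}$, since $B$ is independent of $\tau$ and cyclic shifts preserve the law of Brownian motion; this is the exact analogue of Lemma~\ref{lem:shift} and establishes (i).

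For (ii) I would use the deterministic identity $b_k(T)=1+(k+C_k(T))/2$ of Lemma~\ref{lem:relationbC}. The decisive simplification is that the contour now lives at scale $n/D_n$, which is $n^{1-1/\alpha}$ up to a slowly varying factor and hence $o(\sqrt n)$ for $\alpha<2$; therefore $C_{2nt}(\cT_n)/\sqrt n\to0$ and $(b_{2nt}(\cT_n)-nt)/\sqrt n\to0$ uniformly in probability, while $b_{2nt}(\cT_n)/n\to t$. Writing $N^\cA_{2nt}=K^\cA_{b_{2nt}}$ and decomposing as in the proof of Theorem~\ref{thm:processus}~(ii), the drift contribution coming from $b$ vanishes and the time change $b_{2nt}/n\to t$ leaves the Brownian limit unchanged, yielding $\sqrt{\mu_\cA(1-\mu_\cA)}\,B_t$ with no $H^{(\alpha)}$-component. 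Joint convergence with $(D_n/n)\,C_{2nt}(\cT_n)\to H^{(\alpha)}_t$ follows from the stable analogue of Theorem~\ref{thm:CW} (Duquesne \cite{Duq03}; see also \cite[Theorem~8.1]{Kor12}), which completes the proof.
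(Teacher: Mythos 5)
Your proposal is correct in substance and follows the same skeleton as the paper (unconditioned joint convergence, then the bridge via absolute continuity and the local limit theorem, then Vervaat, then the contour via Lemma \ref{lem:relationbC} and the observation that $n/D_n=o(\sqrt n)$ kills the drift term). Where you genuinely diverge is the key new point of the stable case, namely the joint unconditioned limit with independent marginals. The paper obtains this in one stroke from a bivariate local limit theorem for vectors whose first coordinate is stable and whose remaining coordinates have finite variance (Theorem \ref{thm:stablemultivariatellt}, due to Resnick--Greenwood, Hahn--Klass and Doney), applied to $(S_1,J_1^{\cA})$: this simultaneously gives the one-dimensional joint limit with the built-in independence of the stable and Gaussian components (upgraded to functional convergence by the same random-walk criterion as in Lemma \ref{lem:pont}) \emph{and} the local estimates needed for the absolute-continuity step. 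You instead propose a big-jump truncation at level $\varepsilon D_n$; this is a legitimate alternative and your accounting (only $O_{\P}(1)$ large increments, each moving $J^{\cA}$ by at most $1$, hence a $o_{\P}(\sqrt n)$ perturbation) is sound, but it costs you a delicate double limit $n\to\infty$ then $\varepsilon\to 0$ in the Skorokhod topology that the paper's route avoids entirely --- and since you invoke the stable local limit theorem anyway for the bridge reweighting, the bivariate version would have been available at essentially no extra cost. One small factual slip: $\sum_{i\in\cA}(i-1)\mu_i$ is \emph{always} finite here, since $\mu$ is critical and hence has finite mean; the correct reason a covariance computation does not suffice is not divergence of the covariance (the normalized covariance is of order $\sqrt n/D_n\to 0$) but the fact that vanishing asymptotic covariance does not imply independence for a non-Gaussian joint limit.
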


The processes $X^{(\alpha)}, H^{(\alpha)}$ only depend on $\alpha$, and are the continuous-time analogues of respectively the Lukasiewicz path and the contour function of the so-called $\alpha$-stable tree (see Fig. \ref{fig:stabletree} for a picture, and \cite{DLG02} for more details). This stable tree is a random compact metric space introduced by Duquesne and Le Gall \cite{DLG02}, known to be the scaling limit of the sequence of size-conditioned $\mu$-Galton-Watson trees $(\cT_n)$, when $\mu$ is in the domain of attraction of an $\alpha$-stable law. Notably, when $\alpha=2$, $X^{(2)} = H^{(2)} = \e$.

\begin{figure}[ht!]
\center
\caption{An approximation of the $\alpha$-stable tree and the processes $X^{(\alpha)}$ and $H^{(\alpha)}$, for $\alpha=1.6$.}
\label{fig:stabletree}
\includegraphics[scale=.8]{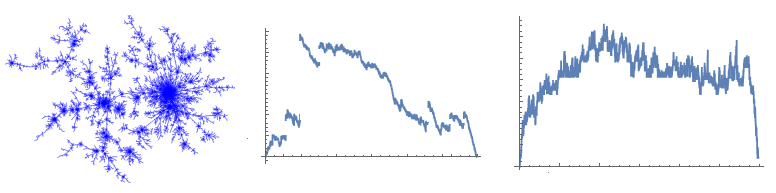}
\end{figure}

Note that, setting $\sigma^2=\infty$ in the definition of $\gamma_\cA$ given in Theorem \ref{thm:processus}, we obtain exactly $\gamma_\cA = \sqrt{\mu_\cA(1-\mu_\cA)}$, so that Theorem \ref{thm:stableprocessus} is indeed the natural generalization of the finite variance case. An interesting remark, in the infinite variance case, is that the two marginals of the limiting processes are independent. 

On the other hand, the results of Theorem \ref{thm:normality} still hold in this case:

\begin{thm}
\label{thm:stablenormality}
Let $\mu$ be a critical offspring distribution with infinite variance in the domain of attraction of a stable law, and let $\cA, \cB $ be subsets of $\mathbb{Z}_{+}$ such that $\mu_\cB>0$. For $n \geq 1$, let $\mathcal{T}_n^{\cB}$ be a $\mu$-GW tree conditioned to have $n$ $\cB$-vertices. Then:
\begin{enumerate}
\item[(i)]  as $n \rightarrow \infty$, $\frac{1}{n}\mathbb{E}(N^{\cA}(\mathcal{T}_n^\cB)) \rightarrow \frac{\mu_{\cA}}{\mu_{\cB}}$;
\item[(ii)] there exists $\delta_{\cA,\cB} \geq 0$ such that the convergence
\begin{equation*}
\frac{N^{\cA}(\mathcal{T}_n^{\cB})-n \frac{\mu_{\cA}}{\mu_{\cB}}}{\sqrt{n}}  \quad \mathop{\longrightarrow}^{(d)}_{n \rightarrow \infty} \quad  \mathcal{N}(0,\delta_{\cA,\cB}^2)
\end{equation*}
holds in distribution, where $\mathcal{N}(0,\delta_{\cA,\cB}^2)$ is a centered Gaussian random variable with variance $\delta_{\cA,\cB}^2$. In addition, $\delta_{\cA,\cB}=0$ if and only if $\mu_\cA=0$ or $\mu_{\cA \backslash \cB} = \mu_{\cB \backslash \cA} = 0$.
\item[(iii)] the convergences \eqref{eq:normalite} hold jointly for $ \mathcal{A} \subset \Z_{+}$, in the sense that for every $j \geq 1$ and $\cA_1, \cdots, \cA_j \subset \Z_{+}$, $ (({N^{\cA_i}(\mathcal{T}_n^{\cB})-n \frac{\mu_{\cA_i}}{\mu_{\cB}}})/{\sqrt{n}})_{1 \leq i \leq j} $ converges in distribution to a Gaussian vector.
\end{enumerate}
\end{thm}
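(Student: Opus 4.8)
The plan is to follow the proof of Theorem~\ref{thm:normality} almost line by line, the only structural change being the replacement of the bivariate and multivariate \emph{Gaussian} local limit theorems (Lemmas~\ref{lem:asymp} and~\ref{lem:asymp2}) by a \emph{mixed} local limit theorem, in which the Lukasiewicz coordinate $S$ is governed by a stable local limit at scale $D_n$ while the counting coordinates $J^{\cA_i}$ are governed by a Gaussian local limit at scale $\sqrt n$, and in which these two groups of coordinates are \emph{asymptotically independent}. The point is that every variable of the form $N^{\mathcal{C}}(\cT)$ appearing in the argument is, through Proposition~\ref{prop:joint}, read off from the counting walk $J^{\mathcal{C}}$, whose increments are bounded (hence square-integrable) Bernoulli variables; the infinite variance of $\mu$ enters only through $S$ itself. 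Once the mixed local limit theorem is available, one formally sets $\sigma^2=\infty$ in all the formulas of Section~\ref{sec:asymptotic_normality}: the quantity $\gamma_\cA$ degenerates to $\sqrt{\mu_\cA(1-\mu_\cA)}$ (consistently with the convention of Theorem~\ref{thm:stableprocessus}), all cross-correlation terms of the form $\mu_\cA m_\cA/\sigma^2$ disappear, and the remaining computations are unchanged.

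The heart of the matter, and the only serious obstacle, is this mixed local limit theorem, which I would establish by Fourier inversion applied to the i.i.d.\ vectors $\mathbf{Y}_1=(S_1,\mathds{1}_{S_1+1\in\cB_1},\dots,\mathds{1}_{S_1+1\in\cB_j})$ for a partition $(\cB_1,\dots,\cB_j)$ of $\Z_+$. Writing the joint local probability as an integral of the $n$-th power of the characteristic function over $[-\pi,\pi]^{j+1}$ and expanding the log-characteristic function near the origin, the relevant scales are $t\sim D_n^{-1}\sim n^{-1/\alpha}$ for the stable coordinate and $s\sim n^{-1/2}$ for the bounded coordinates. Since $\alpha>1$ one has $\E|S_1|<\infty$, so each mixed moment $\E[S_1\,\mathds{1}_{S_1+1\in\cB_i}]$ is finite and the associated cross term contributes at order $n\cdot n^{-1/\alpha}\cdot n^{-1/2}=n^{1/2-1/\alpha}\to 0$ (using $1/\alpha>1/2$ for $\alpha<2$). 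Hence the cross terms are negligible and the integral factorizes asymptotically into the stable local density of $S$ (at scale $D_n$) times the Gaussian local density of $(J^{\cB_1},\dots,J^{\cB_j})$ (at scale $\sqrt n$), with covariance matrix equal to that of the Bernoulli vector. The remaining technical points (lattice and aperiodicity bookkeeping, uniformity over compacts, and the input of the one-dimensional stable local limit theorem, see \cite{IL71}) are routine adaptations of Theorem~\ref{thm:multivariatellt}.

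With this tool in hand, I would prove (i) as in Section~\ref{subsection:expectation}. The binomial tail estimates on $J^\cB_k$ underlying Lemma~\ref{lem:estim} rely only on Hoeffding's inequality and are insensitive to the tail of $\mu$; the single input that must be updated is the decay rate \eqref{eq:NB}, which in the stable regime becomes $\P(N^\cB(\cT)=k)=k^{-1-1/\alpha}\ell(k)$ for a slowly varying $\ell$ (obtained, as in \eqref{eq:NB}, by summing $\frac{1}{n}\P(S_n=-1,J_n^\cB=k)$ over $n$ via the mixed local limit theorem, the Gaussian coordinate concentrating $n$ around $k/\mu_\cB$ at scale $\sqrt k$). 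Since this probability still decays only polynomially while the excluded binomial mass is $oe(k)$, the estimate $\E[N^{\Z_+}(\cT_n^{\cB})\mathds{1}_{N^{\Z_+}(\cT_n^{\cB})\notin I_n}]=oe(n)$ survives verbatim, and the derivation of Theorem~\ref{thm:normality}~(i) goes through unchanged.

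For (ii) and (iii) I would reprove Proposition~\ref{loool} and Lemmas~\ref{lem:normality} and~\ref{lem:asymp2} with the mixed local limit theorem substituted for the Gaussian one. In Proposition~\ref{loool} the fluctuations of $N^{\Z_+}(\cT_k^{\cB})$ around $k/\mu_\cB$ remain Gaussian at scale $\sqrt k$ with variance $(1-\mu_\cB)/\mu_\cB^2$ (the $\sigma^2=\infty$ limit of $\gamma_\cB^2/\mu_\cB^3$), because conditioning on $J^\cB=k$ pins the number of steps through a binomial count, while the stable prefactor $(1/n)\P(S_n=-1)\sim C/(nD_n)$ varies slowly over the window $n=k/\mu_\cB+O(\sqrt k)$ and cancels in the conditional ratio. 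In Lemma~\ref{lem:normality} the four counting coordinates attached to the partition $(\cA\cap\cB,\cA\backslash\cB,\cB\backslash\cA,\cA^c\cap\cB^c)$ stay jointly Gaussian and asymptotically independent of the stable walk, so the conditional law of $N^\cA$ given $N^\cB$ and $N^{\Z_+}$ is Gaussian with the same structure as before, yielding $\delta_{\cA,\cB}^2=C_{\cA,\cB}^2\gamma^2+\sigma_{\cA,\cB}^2$ with the same degeneracy dichotomy ($\delta_{\cA,\cB}=0$ iff $\mu_\cA=0$ or $\mu_{\cA\backslash\cB}=\mu_{\cB\backslash\cA}=0$), now with $\gamma^2=(1-\mu_\cB)/\mu_\cB^2$. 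Finally (iii) follows from the multivariate argument sketched after Lemma~\ref{lem:normality}, applied to the partition generated by $\cA_1,\dots,\cA_j,\cB$, the mixed local limit theorem again ensuring that the counting coordinates converge jointly to a Gaussian vector.
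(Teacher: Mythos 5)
Your proposal follows essentially the same route as the paper: Theorem \ref{thm:stablenormality} is obtained by rerunning the proof of Theorem \ref{thm:normality} with the multivariate Gaussian local limit theorem (Theorem \ref{thm:multivariatellt}) replaced by a mixed stable/Gaussian local limit theorem for the vector $(S_n, J_n^{\cA\cap\cB},J_n^{\cA\setminus\cB},J_n^{\cB\setminus\cA},J_n^{\cA^c\cap\cB^c})$, in which the stable coordinate (at scale $D_n$) is asymptotically independent of the jointly Gaussian counting coordinates (at scale $\sqrt{n}$), so that all $\sigma^{-2}$ cross terms vanish and $\gamma_\cA$ degenerates to $\sqrt{\mu_\cA(1-\mu_\cA)}$. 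The only divergence is that the paper imports the mixed local limit theorem from the literature (\cite{RG79}, \cite{HK80}, \cite{Don91}, stated as Theorem \ref{thm:stablemultivariatellt}) rather than deriving it by Fourier inversion as you sketch; in that derivation the cross-term bound should be phrased as $\sqrt{n}/D_n\to 0$ (which also covers $\alpha=2$ with infinite variance) rather than via $1/\alpha>1/2$, which excludes that boundary case.
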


These two generalizations can be obtained by slightly adapting the proofs of Theorems \ref{thm:processus} and Theorem \ref{thm:normality}. Let us only explain the most important changes in these proofs, which consist in generalizing Theorems \ref{thm:CW} and \ref{thm:multivariatellt} in the stable framework:

\begin{thm}[Duquesne \& Le Gall \cite{DLG02}]
\label{thm:stableduquesne}
Let $\alpha \in (1,2]$, and let $\mu$ a critical distribution in the domain of attraction of an $\alpha$-stable law. Let $(D_n)_{n \geq 1}$ be a sequence satisfying \eqref{eq:Dn}. Then, the following convergence holds jointly in distribution:
\begin{align*}
\left( \frac{D_n}{n} C_{2nt}(\cT_n) , \frac{1}{D_n} W_{nt}(\cT_n) \right)_{0 \leq t \leq 1} \underset{n \rightarrow \infty}{\overset{(d)}{\rightarrow}} \left( H^{(\alpha)}_t, X^{(\alpha)}_t \right)_{0 \leq t \leq 1}.
\end{align*}
\end{thm}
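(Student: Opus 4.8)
Since Theorem~\ref{thm:stableduquesne} is quoted from \cite{DLG02}, the plan is to recall the structure of its proof, treating the two coordinates separately and then arguing joint convergence. For the Lukasiewicz path, Lemma~\ref{lem:codage} identifies $(W_i(\cT_n))_{0 \le i \le n}$ with the random walk $(S_i)$ conditioned on the event $\{S_n=-1,\ S_i \ge 0 \text{ for } 0 \le i \le n-1\}$. Since $\mu$ lies in the domain of attraction of an $\alpha$-stable law, the classical invariance principle gives $(S_{\lfloor nt \rfloor}/D_n)_{t \ge 0} \to (X_t)_{t \ge 0}$ in distribution in $\D([0,\infty),\R)$, where $X$ is the spectrally positive $\alpha$-stable Lévy process and $(D_n)$ is normalised by \eqref{eq:Dn}. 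To pass to the conditioned (excursion) version I would follow exactly the scheme of Lemma~\ref{lem:pont} and of the proof of Theorem~\ref{thm:processus}~(i): first establish the bridge convergence using an $\alpha$-stable local limit theorem together with an absolute-continuity argument, and then apply the Vervaat transform (the cyclic lemma being responsible for the factor $1/n$, cf.\ Proposition~\ref{prop:joint}) to obtain that $(W_{nt}(\cT_n)/D_n)_{0 \le t \le 1}$ converges to the normalised excursion $X^{(\alpha)}$ of $X$.

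The delicate coordinate is the contour function, for which I would go through the \emph{height process}. Define the discrete height process $H_k=\#\{ j \in \{0,\dots,k-1\}:\ S_j=\min_{j \le i \le k} S_i\}$, which coincides with the generation of the $(k+1)$-th vertex of the tree in lexicographic order, and recall the continuous height process of $X$, given heuristically by $H_t=\lim_{\varepsilon \to 0}\frac{1}{\varepsilon}\int_0^t \mathds{1}_{\{X_s<\,\inf_{s \le u \le t}X_u+\varepsilon\}}\,ds$. The core of the argument, and its main obstacle, is the joint convergence
\[\left( \frac{S_{\lfloor nt \rfloor}}{D_n},\ \frac{D_n}{n} H_{\lfloor nt \rfloor} \right)_{t \ge 0} \quad \mathop{\longrightarrow}^{(d)}_{n \rightarrow \infty} \quad \left( X_t,\ H^{(\alpha)}_t \right)_{t \ge 0}.\]
Unlike the Brownian case, where $H^{(2)}=\e$ simply equals the excursion, for $\alpha<2$ the height process is a genuinely nontrivial local-time functional of $X$, so it cannot be read off the walk convergence alone: one must show that the discrete functional $H$ converges to the continuous local-time limit. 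This is precisely the technical heart of \cite{DLG02}, carried out by time-reversing the walk so that $H_k$ becomes a count of record times, and by controlling the resulting local-time approximation uniformly through estimates on the ladder structure of the stable Lévy process. I would import this convergence and then condition to stay nonnegative exactly as in the first paragraph.

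It remains to pass from the height process to the contour function, which are related by a purely deterministic time change: writing $K_k=2k-H_k$ for the time at which the $(k+1)$-th vertex in lexicographic order is first visited by the contour, one has $C_{K_k}=H_k$, while on each interval $[K_k,K_{k+1}]$ the contour descends to the common ancestor of $v_k$ and $v_{k+1}$ and climbs back, so that $\sup_{K_k \le t \le K_{k+1}}|C_t-H_k| \le |H_{k+1}-H_k|+1$. Since $H_k$ is of smaller order than $n$ after the relevant rescaling, one has $K_{\lfloor nt \rfloor}/(2n) \to t$, i.e.\ the time change converges to the identity; and since the rescaled height process $((D_n/n)H_{\lfloor nt \rfloor})$ converges \emph{uniformly} to the continuous process $H^{(\alpha)}$, its one-step increments $(D_n/n)|H_{k+1}-H_k|$ tend to $0$ uniformly in $k$. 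Together these show that $\sup_{0 \le t \le 1}\left|\tfrac{D_n}{n}C_{2nt}(\cT_n)-\tfrac{D_n}{n}H_{\lfloor nt \rfloor}(\cT_n)\right| \to 0$ in probability, so the rescaled contour inherits the limit $H^{(\alpha)}$ of the rescaled height process. Finally, since $H^{(\alpha)}$ is a measurable functional of $X$, the announced joint convergence of $\left(\tfrac{D_n}{n}C_{2nt}(\cT_n),\,W_{nt}(\cT_n)/D_n\right)$ to $(H^{(\alpha)},X^{(\alpha)})$ follows from the joint convergence of $(S/D_n,(D_n/n)H)$ established above, conditioned to stay nonnegative.
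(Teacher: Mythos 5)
The paper does not prove Theorem~\ref{thm:stableduquesne}: it is imported verbatim from the literature (attributed to Duquesne \& Le Gall \cite{DLG02}; the precise functional statement for conditioned trees is Duquesne \cite{Duq03}), exactly as Theorem~\ref{thm:CW} is imported in the finite-variance case. So there is no internal proof to compare against; what you have written is a reconstruction of the external argument, and as an outline it is faithful and essentially correct: invariance principle for the walk with normalisation $D_n$, bridge-plus-Vervaat to get the normalised excursion $X^{(\alpha)}$ of the spectrally positive stable process, the discrete height process $H_k=\#\{j<k:\ S_j=\min_{j\le i\le k}S_i\}$ converging jointly with the walk to the local-time functional $H^{(\alpha)}$, and the deterministic comparison $C_{K_k}=H_k$ with $K_k=2k-H_k$ and $\sup_{K_k\le t\le K_{k+1}}|C_t-H_k|\le |H_{k+1}-H_k|+1$, which transfers the limit from the height process to the contour. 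You also correctly identify the genuine technical heart (the local-time approximation for $H$), which no soft argument can replace.

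The one place where your sketch is too optimistic is the final sentence of the second paragraph: ``I would import this convergence and then condition to stay nonnegative exactly as in the first paragraph.'' The absolute-continuity scheme of Lemma~\ref{lem:pont} works for the walk because $(S_i)$ is Markov and the conditioning $\{S_n=-1\}$ only enters through the endpoint, via the local limit theorem. The height process is not Markov on its own, and passing from the unconditioned joint convergence of $(S,H)$ to the conditioned (excursion) version is precisely the nontrivial content of \cite{Duq03}; it requires controlling $H$ under the bridge/excursion measure (e.g.\ through the time-reversal identity you mention and a Vervaat-type transformation applied to the pair), not a verbatim repetition of the first paragraph. Since the theorem is quoted rather than proved in the paper, this does not affect anything downstream, but if you intended your sketch as an actual proof, that step is where the real work lies and where your argument, as stated, has a gap.
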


The other ingredient is a multivariate local limit theorem in the stable case. When the first coordinate of a random vector is in the domain of attraction of a stable law and has infinite variance, while all others coordinates have finite variance, the random vector satisfies a local limit theorem. In addition, the first coordinate of the limiting object is independent of all others, which themselves are distributed as a Gaussian vector:

\begin{thm}[Resnick \& Greenwood \cite{RG79}, Hahn \& Klass \cite{HK80}, Doney \cite{Don91}]
\label{thm:stablemultivariatellt}
Let $\alpha \in (1,2]$. Let $j \geq 1$ and $(\mathbf{Y}_i)_{i \geq 1} := ((Y_i^{(1)}, \ldots, Y_i^{(j)}))_{i \geq 1}$ be i.i.d. random variables in $\Z^j$, such that $Y_1^{(1)}$ is in the domain of attraction of an $\alpha$-stable law $\mu$ and has infinite variance, and that the covariance matrix $\Sigma$ of the vector $(Y_1^{(2)}, \ldots, Y_1^{(j)})$ is symmetric positive definite. Assume in addition that $\mathbf{Y}_1$ is aperiodic, and denote by $M^{(k)}$ the mean of $Y_1^{(k)}$, for $1 \leq k \leq j$. Finally, define for $n \geq 1$
\begin{align*}
\mathbf{T_n} = \sum_{i=1}^n \left( \frac{Y_i^{(1)}-M^{(1)}}{D_n}, \frac{Y_i^{(2)}-M^{(2)}}{\sqrt{n}}, \ldots, \frac{Y_i^{(j)}-M^{(j)}}{\sqrt{n}} \right)
\end{align*}
Then, as $n \rightarrow \infty$, uniformly for $\mathbf{x} := (x^{(1)}, \ldots, x^{(j)})$ in a compact subset of $\R^j$ satisfying $\P \left( \mathbf{T_n} = \mathbf{x} \right) > 0$,
\begin{align*}
\P \left( \mathbf{T_n} = \mathbf{x} \right) \sim  \frac{g \left(x^{(1)} \right)}{D_n} \times \frac{1}{(2\pi n)^{(j-1)/2} \sqrt{\det \Sigma}} e^{-\frac{1}{2} ^t \mathbf{\tilde{x}} \Sigma^{-1} \mathbf{\tilde{x}}},
\end{align*}
where $g$ is the density of $\mu$ and $\tilde{x} := (x^{(2)}, \ldots, x^{(j)})$.
\end{thm}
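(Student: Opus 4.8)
The plan is to argue by Fourier inversion, exploiting the mismatch between the two normalizations $D_n$ and $\sqrt{n}$ to produce the asymptotic factorization of the limiting density. Write $\phi$ for the characteristic function of the centered vector $\mathbf{Y}_1 - \mathbf{M}$, and let $\hat{g}$ denote the characteristic function of the $\alpha$-stable density $g$. Using the aperiodicity of $\mathbf{Y}_1$ together with the lattice Fourier inversion formula, and then substituting $t_1 = s_1/D_n$ and $t_k = s_k/\sqrt{n}$ for $2 \leq k \leq j$, I would first write
\[
\P \left( \mathbf{T_n} = \mathbf{x} \right) = \frac{1}{(2\pi)^j \, D_n \, n^{(j-1)/2}} \int_{R_n} \phi \! \left( \frac{s_1}{D_n}, \frac{s_2}{\sqrt{n}}, \ldots, \frac{s_j}{\sqrt{n}} \right)^{\! n} e^{-i \langle \mathbf{s}, \mathbf{x} \rangle} \, d\mathbf{s},
\]
where $R_n = [-\pi D_n, \pi D_n] \times [-\pi \sqrt{n}, \pi \sqrt{n}]^{j-1}$ is the rescaled fundamental domain. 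Since the prefactor already matches the one in the statement, it remains to show that the integral converges to the Fourier inversion integral of the product of $g$ and the centered Gaussian density with covariance $\Sigma$.

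I would then split $R_n$ into a fixed neighborhood of the origin and its complement. On the central region, the integrand converges pointwise: a second-order Taylor expansion of $\log \phi$ in the last $j-1$ variables produces the Gaussian factor $\exp(-\tfrac12 \, {}^t \tilde{\mathbf{s}} \, \Sigma \, \tilde{\mathbf{s}})$ with $\tilde{\mathbf{s}} = (s_2, \ldots, s_j)$, while the domain-of-attraction hypothesis \eqref{eq:L} combined with the normalization \eqref{eq:Dn} gives $n \log \E[e^{i s_1 (Y_1^{(1)} - M^{(1)})/D_n}] \to \log \hat{g}(s_1)$. The decisive point is that the cross terms coupling the first coordinate to the others vanish in the limit: such a contribution is of order $n \cdot (1/D_n)(1/\sqrt{n}) = \sqrt{n}/D_n$, and the infinite-variance assumption combined with \eqref{eq:Dn} forces $D_n$ to grow strictly faster than $\sqrt{n}$, so that $\sqrt{n}/D_n \to 0$ and the limit factorizes as $\hat{g}(s_1) \exp(-\tfrac12 \, {}^t \tilde{\mathbf{s}} \, \Sigma \, \tilde{\mathbf{s}})$. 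This scale separation is precisely the analytic origin of the asserted independence between the first coordinate and the Gaussian block.

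The main obstacle is twofold. First, because $Y_1^{(1)}$ has infinite variance, the cross terms cannot be controlled by a naive covariance expansion (the relevant mixed moments need not exist); I would circumvent this by truncating $Y_1^{(1)}$ at a level growing slower than $D_n$ and estimating the characteristic functions of the truncated part and of the tail separately, in the spirit of the classical domain-of-attraction estimates. Second, the dominated-convergence step over the non-central part of $R_n$ is delicate because the two coordinate blocks live on different scales: one must show that $|\phi|^n$ is integrably small away from the origin, which mixes the stable decay in the $s_1$-direction with the Gaussian decay in $\tilde{\mathbf{s}}$, and near the boundary of the fundamental domain uses aperiodicity to obtain a uniform bound $\sup |\phi| < 1$ on the relevant compact sets.

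Finally, having justified passing the limit inside the integral, I would recognize the resulting expression as the Fourier inversion integral of $g(x^{(1)})$ times the centered Gaussian density of covariance $\Sigma$ evaluated at $\tilde{\mathbf{x}}$, which yields exactly the claimed asymptotic. Alternatively, since the distributional convergence of $\mathbf{T_n}$ to the product of an $\alpha$-stable law and an independent Gaussian vector is classical (the asymptotic independence following from a single-big-jump decomposition, the large jump carrying the stable coordinate while contributing a negligible $o(\sqrt{n})$ amount to each finite-variance coordinate), one may instead simply combine the local limit theorems of the cited references rather than redo the Fourier analysis by hand.
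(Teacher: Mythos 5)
Your proposal takes a genuinely different route from the paper. The paper does not prove this theorem by Fourier analysis at all: it assembles it from the cited literature, invoking Resnick \& Greenwood for $\alpha\in(1,2)$ (where the convergence of the two marginals separately is shown to imply the joint local limit theorem with independent limiting components), Hahn \& Klass for $\alpha=2$ with infinite variance (a bivariate CLT in which the cross-normalization constant vanishes, so the limiting coordinates are independent) combined with Doney's result that a bivariate CLT upgrades to a local limit theorem, and then remarks that the $j\ge 3$ case follows with mild modifications. Your closing sentence (``one may instead simply combine the local limit theorems of the cited references'') is in fact exactly what the paper does. What your direct approach buys is a self-contained and transparent explanation of \emph{why} the limit factorizes: the cross terms in the exponent carry the mismatched normalization $\sqrt{n}/D_n$, and \eqref{eq:L}--\eqref{eq:Dn} together with the infinite-variance hypothesis force $D_n/\sqrt{n}\to\infty$, so the scale separation kills the coupling. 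Your bookkeeping is also correct: the rescaled inversion formula produces precisely the prefactor $D_n^{-1}(2\pi n)^{-(j-1)/2}$, and the limiting integrand $\hat{g}(s_1)\exp(-\tfrac12\,{}^t\tilde{\mathbf{s}}\,\Sigma\,\tilde{\mathbf{s}})$ inverts to the stated product of densities.

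That said, be aware that the two obstacles you flag are not side issues but the entire substance of the theorem, and your sketch defers rather than resolves them. The naive second-order expansion of the cross term requires $\E\lvert Y_1^{(1)}Y_1^{(k)}\rvert<\infty$, which is not guaranteed; the truncation of $Y_1^{(1)}$ at a level $o(D_n)$ must be carried out so that the truncated covariance, divided by $D_n\sqrt{n}$ and multiplied by $n$, still tends to zero --- this needs the quantitative form of the domain-of-attraction estimates (Karamata-type bounds on truncated moments), not just the qualitative statement. Likewise, the domination of $\lvert\phi\rvert^n$ over $R_n$ away from the origin genuinely mixes two decay regimes and is where aperiodicity must be used quantitatively. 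These are standard but nontrivial; if you intend the Fourier route you should either execute them or, as the paper does, delegate them to the references where this analysis has already been carried out.
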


Let us explain how we obtain this result, by combining the results of \cite{Don91}, \cite{HK80} and \cite{RG79}. We first focus on the case $j=2$. When $\alpha \in (1,2)$, \cite[Theorem $3$]{RG79} states that the convergences of the two marginals $\sum_{i=1}^n (Y_i^{(1)}-M^{(1)})/D_n$ and $\sum_{i=1}^n (Y_i^{(2)}-M^{(2)})/\sqrt{n}$ hold, and that obtaining these two convergences separately is enough to get Theorem \ref{thm:stablemultivariatellt}. The same theorem states in addition that the two limiting marginals are independent.

On the other hand, when $j=2$, $\alpha=2$ and $\mu$ has infinite variance, Theorem $3$ in \cite{HK80} shows that $\textbf{T}_n$ converges in distribution to a bivariate normal variable, and that the first coordinate of the limiting distribution is independent of the second (the constant $\gamma_n$ that appears in the statement of \cite[Theorem $3$]{HK80} can be proved to be $0$, so that the renormalization matrix $A_n$ appearing in this theorem is diagonal). This, coupled with \cite[Theorem $1$]{Don91} (which, roughly speaking, states that a bivariate central limit theorem implies a local limit theorem), implies Theorem \ref{thm:stablemultivariatellt} in the case $\alpha=2$, $j=2$. 
Although these results are only stated for $j=2$ (with the exception of \cite[Theorem $3$]{HK80}, which is generalized in \cite[Theorem $5$]{HK80}), they still hold for $j \geq 3$ with mild motifications.

The proof of Theorem \ref{thm:stableprocessus} follows the proof of Theorem \ref{thm:processus} in the finite variance case, applying Theorem \ref{thm:stablemultivariatellt} to the random vector $(S_1, J_1^{\cA})$. In order to generalize the results of Theorem \ref{thm:normality} to the infinite variance case, we apply Theorem \ref{thm:stablemultivariatellt} to the vector $(S_n, J_n^{\cB \cap \cA}, J_n^{\cB \backslash \cA}, J_n^{\cA \backslash \cB}, J_n^{\cA^c \cap \cB^c})$.

\paragraph*{Convergence of $\cT_n^{\cA}$ to the stable tree}

We finish the study of the stable case by proving the convergence of the conditioned trees $(\cT_n^\cA)$, properly renormalized, to the stable tree, for any $\cA \subset \Z_+$ satisfying $\mu_\cA>0$. More precisely, the multivariate theorem \ref{thm:multivariatellt}, along with Proposition \ref{prop:joint}, allows to obtain the following asymptotics, which generalizes \cite[Theorem $8.1$ (i)]{Kor12}:

\begin{prop}
\label{prop:general8.1}
Let $\alpha \in (1,2]$, and let $\mu$ be in the domain of attraction of an $\alpha$-stable law with infinite variance. Let $\cA \subset \Z_+$ be such that $\mu_\cA > 0, \mu_{\cA^c} >0$ and $\cT$ be a $\mu$-GW tree. Then, there exists a constant $C$ depending only on $\mu$ and $\cA$ such that the following holds as $n \rightarrow \infty$, for the values of $n$ such that $\P \left( N^\cA(\cT) = n \right) > 0$:
\begin{align*}
\P \left( N^\cA(\cT) = n \right) = \sum_{k \geq 0} \P \left( N^{\Z_+}(\cT) = k, N^\cA(\cT) = n \right) \sim \frac{C}{L(n) n^{1+1/\alpha}},
\end{align*}
where $L$ verifies \eqref{eq:L}.
\end{prop}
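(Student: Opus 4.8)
The plan is to reduce the statement to a random-walk estimate via Proposition~\ref{prop:joint}, apply the stable multivariate local limit theorem, and then perform a Laplace-type summation. With the notation of Proposition~\ref{prop:joint}, let $S$ be the walk with jump law $\mu(\cdot+1)$ and $J^\cA_{i+1}-J^\cA_i=\mathds{1}_{S_{i+1}-S_i+1\in\cA}$. Decomposing over the total number of vertices $k=N^{\Z_+}(\cT)$ and using Proposition~\ref{prop:joint},
\[
\P\big(N^\cA(\cT)=n\big)=\sum_{k\ge 1}\frac1k\,\P\big(S_k=-1,\ J_k^\cA=n\big).
\]
Since each increment of $J^\cA$ is $0$ or $1$, the constraint $J_k^\cA=n$ forces $k\ge n$, so the sum runs over $k\ge n$ and $1/k\le 1/n$ throughout. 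The vector $\mathbf Y_1=(S_1,J_1^\cA)$ has first coordinate in the domain of attraction of the $\alpha$-stable law with infinite variance, second coordinate Bernoulli of parameter $\mu_\cA$ (finite variance $\mu_\cA(1-\mu_\cA)>0$, using $\mu_\cA>0$ and $\mu_{\cA^c}>0$), and means $\E(S_1)=0$, $\E(J_1^\cA)=\mu_\cA$. This is exactly the setting of Theorem~\ref{thm:stablemultivariatellt} with $j=2$.

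Applying Theorem~\ref{thm:stablemultivariatellt} to $(S_k,J_k^\cA)$ with $x^{(1)}=-1/D_k\to 0$ and $x^{(2)}=(n-k\mu_\cA)/\sqrt k$, and using continuity of the stable density $g$ (so $g(x^{(1)})\to g(0)$), gives, uniformly for $k$ such that $x^{(2)}$ stays in a fixed compact set,
\[
\P\big(S_k=-1,\ J_k^\cA=n\big)\ \sim\ \frac{g(0)}{D_k}\,\frac{1}{\sqrt{2\pi k\,\mu_\cA(1-\mu_\cA)}}\,\exp\!\left(-\frac{(n-k\mu_\cA)^2}{2k\,\mu_\cA(1-\mu_\cA)}\right).
\]
I would then sum over $k$, noting that the summand concentrates around $k_\ast=n/\mu_\cA$ on a window of width of order $\sqrt n$. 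Writing $k=k_\ast+u$ one has $n-k\mu_\cA=-u\mu_\cA$, so the exponential factor equals $\exp\big(-u^2\mu_\cA^2/(2n(1-\mu_\cA))\big)$ up to $(1+o(1))$ in the exponent, and $\sum_{u\in\Z}$ of it is asymptotic to $\sqrt{2\pi n(1-\mu_\cA)}/\mu_\cA$. The window has vanishing relative width, so by regular variation the prefactor is frozen, $D_k\sim D_{k_\ast}$ and $1/k\sim\mu_\cA/n$; hence the central contribution is asymptotic to
\[
\frac{\mu_\cA}{n}\,\frac{g(0)}{D_{k_\ast}}\,\frac{1}{\sqrt{2\pi n(1-\mu_\cA)}}\,\frac{\sqrt{2\pi n(1-\mu_\cA)}}{\mu_\cA}=\frac{g(0)}{n\,D_{n/\mu_\cA}}.
\]

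To see that this is the whole asymptotic, the terms with $|k-k_\ast|\gg\sqrt n$ must be discarded. Here I would use $\P(S_k=-1,J_k^\cA=n)\le\P(J_k^\cA=n)$ together with the fact that $J_k^\cA\sim\mathrm{Bin}(k,\mu_\cA)$: Hoeffding's inequality, exactly as in the proof of Lemma~\ref{lem:estim}, shows these probabilities decay exponentially in the deviation, so after multiplication by $1/k\le 1/n$ the off-window terms contribute $o\big(1/(nD_n)\big)$. Since $D$ is regularly varying of index $1/\alpha$, $D_{n/\mu_\cA}\sim\mu_\cA^{-1/\alpha}D_n$, whence
\[
\P\big(N^\cA(\cT)=n\big)\ \sim\ \frac{g(0)\,\mu_\cA^{1/\alpha}}{n\,D_n}.
\]
The announced form follows by translating $D_n$ back through \eqref{eq:Dn}: this relation makes $1/(nD_n)$ of the form $C/(L(n)\,n^{1+1/\alpha})$ for the slowly varying function attached to $D_n$ (governed by the $L$ of \eqref{eq:L}) and a constant $C$ depending only on $\mu$ and $\cA$, with $C>0$ because $g(0)>0$.

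The main obstacle is the uniform control needed to interchange the limit with the infinite sum: one must combine the \emph{uniform} version of Theorem~\ref{thm:stablemultivariatellt} on the central window, where $k\to\infty$ jointly with $x^{(2)}$ ranging over a compact set, with the exponential binomial tail bound outside it, and verify that the slowly varying prefactor $g(0)/D_k$ may indeed be frozen at $k=k_\ast$ across a window of relative width $\to 0$. The remaining computation (the Gaussian sum and the cancellation of the $\sqrt{2\pi n(1-\mu_\cA)}$ and $\mu_\cA$ factors) is routine, and the whole argument applies uniformly over $\alpha\in(1,2]$ with infinite variance since Theorem~\ref{thm:stablemultivariatellt} covers this range.
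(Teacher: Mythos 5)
Your overall route is the one the paper intends (it only sketches this proof): decompose via Proposition~\ref{prop:joint} as $\P(N^\cA(\cT)=n)=\sum_{k\ge n}\frac1k\P(S_k=-1,J_k^\cA=n)$, apply Theorem~\ref{thm:stablemultivariatellt} to $(S_1,J_1^\cA)$, and sum; your computation of the central contribution, including the cancellation giving $g(0)/(nD_{n/\mu_\cA})$ and the regular-variation step $D_{n/\mu_\cA}\sim\mu_\cA^{-1/\alpha}D_n$, is correct and yields the right constant.

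There is, however, a genuine gap in the way you discard the off-window terms, and it is exactly at the place you flag as ``the main obstacle''. The bound $\P(S_k=-1,J_k^\cA=n)\le\P(J_k^\cA=n)$ throws away the constraint $S_k=-1$, which in the stable regime is worth a factor of order $1/D_k\asymp n^{-1/\alpha}$ (up to slowly varying corrections); this loss is \emph{polynomial} in $n$. Concretely, for a fixed $M$ the Hoeffding bound on the annulus $M\sqrt n\le|k-k_\ast|\le 2M\sqrt n$ gives a contribution of order $\frac1n\cdot M\sqrt n\cdot e^{-cM^2}=Me^{-cM^2}n^{-1/2}$, whereas the main term is of order $n^{-1-1/\alpha}$ with $1+1/\alpha\ge 3/2$. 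So for fixed $M$ the ``tail'' dominates the main term, and your scheme (uniform LLT on a compact window $|x^{(2)}|\le M$, Hoeffding outside) does not close: you would need $M=M_n\gtrsim\sqrt{\log n}$ for Hoeffding to win, but then the window is no longer compact and Theorem~\ref{thm:stablemultivariatellt} as stated no longer applies on it. The standard fix is to retain the $1/D_k$ factor in the tail estimate, e.g.\ by writing $\P(S_k=-1,J_k^\cA=n)=\P(J_k^\cA=n)\,\P(S_k=-1\mid J_k^\cA=n)$ and proving a concentration-function bound $\sup_j\P(S_k=j\mid J_k^\cA=n)\le C/D_k$ (conditionally on $J_k^\cA=n$, $S_k$ is a sum of $n$ increments with law $p_\cA$ and $k-n$ increments with law $p_{\cA^c}$, one of which carries the stable tail); then $\sum_{|k-k_\ast|>M\sqrt n}\frac1k\P(S_k=-1,J_k^\cA=n)\le\frac{C}{nD_n}\sum_{|k-k_\ast|>M\sqrt n}\P(J_k^\cA=n)$, and since $\sum_{k\ge n}\P(J_k^\cA=n)=1/\mu_\cA$ exactly (negative binomial) with the mass concentrating on $|k-k_\ast|=O(\sqrt n)$, this is $o(1/(nD_n))$ after letting $M\to\infty$. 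With that ingredient your argument is complete; without it, the interchange of limit and summation is not justified.
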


Note that our bivariate approach allows to prove it for all $\cA \subset \Z_+$, while \cite[Theorem $8.1$ (i)]{Kor12} holds only when $\cA$ or $\Z_+ \backslash \cA$ is finite. An immediate corollary of Proposition \ref{prop:general8.1} is the joint convergence of the contour function and the Lukasiewicz path of the conditioned tree $\cT_n^\cA$:

\begin{cor}
\label{cor:8.1.II}
Restricting ourselves to the values of $n$ such that $\P(N^\cA(\cT) = n)>0$,
\begin{align*}
\left( \frac{D_{N(\cT_n^\cA)}}{N(\cT_n^\cA)} C_{2N(\cT_n^\cA)t}(\cT^\cA_n) , \frac{1}{D_{N(\cT_n^\cA)}} W_{N(\cT_n^\cA)t}(\cT_n^\cA) \right)_{0 \leq t \leq 1} \underset{n \rightarrow \infty}{\overset{(d)}{\rightarrow}} \left( H^{(\alpha)}_t, X^{(\alpha)}_t \right)_{0 \leq t \leq 1}.
\end{align*}
\end{cor}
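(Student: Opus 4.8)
The plan is to condition on the (random) total number of vertices $M_n := N(\cT_n^\cA) = N^{\Z_+}(\cT_n^\cA)$ and thereby reduce the statement to Theorem~\ref{thm:stableduquesne}. Indeed, for any plane tree $T$ with $N^\cA(T) = n$ and $N^{\Z_+}(T) = m$,
\[
\P(\cT_n^\cA = T) = \frac{\P(\cT = T)}{\P(N^\cA(\cT) = n)} = \frac{\P(N^{\Z_+}(\cT) = m)}{\P(N^\cA(\cT) = n)}\, \P(\cT_m = T),
\]
so that, conditionally on $\{M_n = m\}$, the tree $\cT_n^\cA$ has the law of $\cT_m$ further conditioned on $\{N^\cA(\cT_m) = n\}$; moreover, on this event the random renormalizations $D_{M_n}$ and $M_n$ appearing in the statement become exactly the deterministic $D_m$ and $m$ of Theorem~\ref{thm:stableduquesne}. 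The crux is thus to prove that the extra conditioning $\{N^\cA(\cT_m) = n\}$ does not perturb the scaling limit of the coding functions of $\cT_m$, which I expect to follow from the asymptotic independence of $N^\cA(\cT_m)$ and $(C(\cT_m), W(\cT_m))$ noted after Theorem~\ref{thm:stableprocessus}.

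First I would describe the law of $M_n$: by Proposition~\ref{prop:joint} and the stable multivariate local limit theorem~\ref{thm:stablemultivariatellt} applied to $(S_1, J_1^\cA)$, together with Proposition~\ref{prop:general8.1} for the denominator $\P(N^\cA(\cT) = n)$, the probability
\[
\P(M_n = m) = \frac{\P(N^{\Z_+}(\cT) = m,\, N^\cA(\cT) = n)}{\P(N^\cA(\cT) = n)}
\]
concentrates on the window $m = n/\mu_\cA + O(\sqrt n)$, and in particular $M_n/n \to 1/\mu_\cA$ in probability. The same local estimates give a local central limit theorem for the number of $\cA$-vertices of $\cT_m$: writing $\P(N^\cA(\cT_m) = n) = \P(N^{\Z_+}(\cT)=m,\, N^\cA(\cT)=n)/\P(N^{\Z_+}(\cT)=m)$ and applying Theorem~\ref{thm:stablemultivariatellt} to the numerator and the stable single-variable local limit theorem to the denominator, one gets
\[
\P(N^\cA(\cT_m) = n) \sim \frac{1}{\sqrt{2\pi\, \gamma_\cA^2\, m}}\, \exp\!\left( -\frac{(n - m\mu_\cA)^2}{2\, \gamma_\cA^2\, m} \right)
\]
uniformly for $m$ in this window, where $\gamma_\cA^2 = \mu_\cA(1-\mu_\cA)$ as in Theorem~\ref{thm:stableprocessus}.

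Next, writing $\Phi_m := \big( \tfrac{D_m}{m} C_{2mt}(\cT_m),\, \tfrac{1}{D_m} W_{mt}(\cT_m) \big)_{0 \le t \le 1}$ and fixing a bounded continuous $F \colon \D([0,1],\R^2) \to \R$, I would treat each $m$ in the window through the Bayes identity
\[
\E\big[ F(\Phi_m) \,\big|\, N^\cA(\cT_m) = n \big] = \frac{\E\big[ F(\Phi_m)\, \mathbbm{1}_{N^\cA(\cT_m) = n} \big]}{\P(N^\cA(\cT_m) = n)}.
\]
By Theorem~\ref{thm:stableprocessus}, the pair $\big(\Phi_m,\, (N^\cA(\cT_m) - m\mu_\cA)/\sqrt m\big)$ converges jointly to $\big((H^{(\alpha)}, X^{(\alpha)}),\, Z\big)$, with $Z$ a centered Gaussian independent of $(H^{(\alpha)}, X^{(\alpha)})$. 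Combining this with the local central limit theorem above, the numerator and denominator of the Bayes ratio have matching asymptotics, and the conditioning on $\{N^\cA(\cT_m) = n\}$ washes out precisely because $Z$ is independent of the limiting coding pair; this yields $\E[F(\Phi_m) \mid N^\cA(\cT_m) = n] \to \E[F(H^{(\alpha)}, X^{(\alpha)})]$, which I would arrange to hold uniformly over the relevant window of $m$.

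Finally, I would sum over $m$ against $\P(M_n = m)$ and conclude by dominated convergence, the concentration $M_n/n \to 1/\mu_\cA$ guaranteeing that values of $m$ outside the window contribute negligibly, so that the limit may be exchanged with the sum. The main obstacle is the middle step: turning the annealed joint convergence of Theorem~\ref{thm:stableprocessus} into a statement about the conditional law of $\cT_m$ given the event $\{N^\cA(\cT_m) = n\}$, whose probability is of order $m^{-1/2}$. This is a local-limit/absolute-continuity argument in the spirit of the proof of Lemma~\ref{lem:pont}, and the delicate point is to obtain the local central limit theorem for $N^\cA(\cT_m)$ with enough uniformity, and jointly enough with $\Phi_m$, to control the Bayes ratio simultaneously for all $m$ in the relevant range.
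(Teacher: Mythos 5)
Your overall architecture is the right one, and it matches the proof the paper defers to (the corollary is proved in the paper only by the remark that it ``follows exactly the proof of \cite[Theorem 8.1 (II)]{Kor12}'', and that proof does proceed by conditioning on the total progeny, proving its concentration, and showing that the residual conditioning does not affect the scaling limit). Your preliminary observations are all correct: conditionally on $N(\cT_n^\cA)=m$ the tree is $\cT_m$ conditioned on $\{N^\cA(\cT_m)=n\}$, the random normalisations become deterministic on that event, the law of $M_n$ concentrates in a $\sqrt{n}$-window around $n/\mu_\cA$ by Proposition \ref{prop:joint} and Theorem \ref{thm:stablemultivariatellt}, and the local central limit theorem you state for $\P(N^\cA(\cT_m)=n)$ is correct (it is exactly $\P(J_m^\cA=n\mid S_m=-1)$ after the cyclic lemma cancels the $1/m$ factors).

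The genuine gap is the middle step. You deduce $\E[F(\Phi_m)\mid N^\cA(\cT_m)=n]\to\E[F(H^{(\alpha)},X^{(\alpha)})]$ from (a) the joint weak convergence of $(\Phi_m,(N^\cA(\cT_m)-m\mu_\cA)/\sqrt m)$ to a pair with independent components, together with (b) the local CLT for the second component, saying that ``the numerator and denominator of the Bayes ratio have matching asymptotics''. This is not a valid deduction: the event $\{N^\cA(\cT_m)=n\}$ has probability of order $m^{-1/2}$, and weak convergence of a pair gives no control whatsoever over conditional expectations given events of vanishing probability, even when the limit components are independent and a local CLT holds for the marginal. What is actually needed is a \emph{local} joint estimate of the form $\E[F(\Phi_m)\mathbbm{1}_{N^\cA(\cT_m)=n}]\sim\E[F(H^{(\alpha)},X^{(\alpha)})]\,\P(N^\cA(\cT_m)=n)$, uniformly in the window, and this must be manufactured by hand: one works at the level of the walk $(S,J^\cA)$ conditioned on $\{S_m=-1,\,J_m^\cA=n\}$, restricts $F$ to a time interval $[0,u]$, writes the conditioning as a Radon--Nikodym density $\P(S_{m-\lfloor mu\rfloor}=\cdot,\,J^\cA_{m-\lfloor mu\rfloor}=\cdot)/\P(S_m=-1,J_m^\cA=n)$ evaluated at the position at time $\lfloor mu\rfloor$, applies the bivariate local limit theorem \ref{thm:stablemultivariatellt} to both, handles $[u,1]$ by time reversal, and finally applies the Vervaat transform --- i.e.\ precisely the scheme of the proof of Lemma \ref{lem:pont}, run for the bridge conditioned on both coordinates rather than on $S_m=-1$ alone. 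You name this as ``the main obstacle'' and point to Lemma \ref{lem:pont} in your last paragraph, which shows you know where the difficulty lives, but the proof as written asserts the conclusion of that argument rather than carrying it out, and the justification you do offer in its place (joint weak convergence plus independence of the limits) would not survive scrutiny.
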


The proof of this corollary follows exactly the proof of \cite[Theorem $8.1$ (II)]{Kor12}. In particular, this convergence implies the convergence in distribution of the tree $\cT_n^\cA$, viewed as a metric space for the graph distance and properly renormalized, towards the $\alpha$-stable tree for the Gromov-Hausdorff distance (see e.g. \cite[Section $2$]{LG05} for details).

\subsection{Subcritical non-generic offspring distributions.} We now focus on the case where $\mu$ is subcritical (that is with mean strictly less than $1$) and  $\mu_k \sim c k^{-\beta}$ as $k \to \infty$, with fixed  $c > 0$ and $\beta > 2$, and $\cB=\mathbb{Z}_+$. This is an interesting case, as a condensation phenomenon occurs (see \cite{Jan12,Kor15}): a unique vertex with macroscopic degree comparable to the total size of the tree emerges. Then the following asymptotic normality holds.

\begin{thm}
\label{euro}
Assume that   $\mu$ is an offspring distribution such that $\mu_k \sim c k^{-\beta}$ as $k \to \infty$, with fixed  $c > 0$ and $\beta > 2$, and denote by $\mathcal{T}_n$ a $\mu$-GW tree conditioned to have $n$ vertices.
Let $k \geq 1$ and $\cA_1, \cA_2, \ldots, \cA_k \subset \mathbb{Z}_+$ be finite. Then we have the joint convergence in distribution
 \[\left(\frac{N^{\cA_1}(\mathcal{T}_n)-n \mu_{\cA_1}}{\sqrt{n}}, \ldots, \frac{N^{\cA_k}(\mathcal{T}_n)-n \mu_{\cA_k}}{\sqrt{n}}\right) \longrightarrow (Z_{\cA_1}, \ldots , Z_{\cA_k}),\] where $Z_{\cA_i} \sim \mathcal{N}(0,\mu_{\cA_i}(1-\mu_{\cA_i}))$ and for $i \neq j$:
\[
Cov(Z_{\cA_i}, Z_{\cA_j})
= \mu_{\cA_i \cap \cA_j} - \mu_{\cA_i} \mu_{\cA_j}.\]
\end{thm}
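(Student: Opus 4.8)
The plan is to follow the random-walk strategy used throughout the paper. By the multivariate version of the cyclic lemma (Proposition~\ref{prop:joint}, applied simultaneously to the finite sets $\cA_1, \dots, \cA_k$ and to $\cB = \Z_+$), for every $\vec m = (m_1, \dots, m_k) \in \Z_+^k$ one has
\[
\P\bigl(\forall\, 1 \le i \le k,\ N^{\cA_i}(\cT_n) = m_i\bigr) = \frac{\P\bigl(S_n = -1,\ \forall\, i,\ J_n^{\cA_i} = m_i\bigr)}{\P(S_n = -1)},
\]
where $S$ is the walk with step law $\nu := \mu(\cdot + 1)$ and $J_n^{\cA_i} = \sum_{j=1}^n \mathds{1}_{\xi_j + 1 \in \cA_i}$ counts the jumps $\xi_j := S_j - S_{j-1}$ with $\xi_j + 1 \in \cA_i$. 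Thus it suffices to prove that, under $\P(\,\cdot \mid S_n = -1)$, the vector $(({J_n^{\cA_i} - n\mu_{\cA_i}})/{\sqrt n})_{1 \le i \le k}$ converges in distribution to $(Z_{\cA_1}, \dots, Z_{\cA_k})$. Here $S$ has negative drift $m - 1 < 0$, with $m := \sum_i i\mu_i \in (0,1)$, and its step law has the polynomial tail $\nu(j) = \mu_{j+1} \sim c\,j^{-\beta}$.

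The second step is to exploit the one-big-jump structure of the event $\{S_n = -1\}$, which is the random-walk counterpart of the condensation phenomenon in $\cT_n$ (see \cite{Jan12,Kor15}). Because the drift is negative, a trajectory with $S_n = -1$ typically arises from a single macroscopic up-jump $\xi_{T_n} = (1-m)n + o(n)$, while the remaining $n-1$ jumps form an essentially unconstrained i.i.d.\ $\nu$-sample. The elementary but decisive remark is that, since each $\cA_i$ is finite, for $n$ large one has $\xi_{T_n} + 1 > \max_i \max \cA_i$, so the big jump lies in no set $\cA_i - 1$ and contributes $0$ to every count; hence $J_n^{\cA_i} = \sum_{j \neq T_n} \mathds{1}_{\xi_j + 1 \in \cA_i}$ is a sum of bounded i.i.d.\ indicator vectors, on which the position $T_n$ has no influence.

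It then remains to check that the constraint does not distort the Gaussian fluctuations of these finite-range counts, and this is where the heavy tail is essential. Decomposing $\P(S_n = -1, \vec J = \vec m)$ over the value $b$ of the macroscopic jump, the dominant mass comes from configurations where $b = -1 - \sum_{j \neq T_n}\xi_j$ carries the weight $\nu(b) \sim c\,((1-m)n)^{-\beta}$, which is asymptotically \emph{flat} over the entire fluctuation range of the bulk sum (of size $o(n)$, since $\beta > 2$). Summing over $b$ against this flat weight therefore \emph{integrates out} — rather than conditions on — the bulk-sum coordinate, so what survives is the marginal law of the count vector for an \emph{unconditioned} i.i.d.\ $\nu$-sample; in particular the rank-one correction that conditioning on a fixed sum would naively produce does not appear. (When $2 < \beta \le 3$ the step law has infinite variance, and this flatness is precisely the asymptotic independence of the stable coordinate from the Gaussian ones furnished by Theorem~\ref{thm:stablemultivariatellt}.) The multivariate central limit theorem then yields a Gaussian limit whose covariance is that of the indicator vector $(\mathds{1}_{\xi + 1 \in \cA_i})_i$ under $\mu$, namely $\mathrm{Var}(\mathds{1}_{\xi+1 \in \cA_i}) = \mu_{\cA_i}(1 - \mu_{\cA_i})$ and, for $i \neq j$, $\mathrm{Cov}(\mathds{1}_{\xi+1\in\cA_i}, \mathds{1}_{\xi+1\in\cA_j}) = \mu_{\cA_i \cap \cA_j} - \mu_{\cA_i}\mu_{\cA_j}$, as claimed.

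The main obstacle is making the last two steps quantitative. One must establish a one-big-jump local estimate for $\P(S_n = -1, \vec J = \vec m)$, decomposing over the position and size of the macroscopic jump and applying the multivariate local limit theorem (Theorem~\ref{thm:multivariatellt} when $\beta > 3$, or Theorem~\ref{thm:stablemultivariatellt} when $2 < \beta \le 3$ and $\nu$ has infinite variance) to the bulk of the jumps; one must also control the negligible contribution of configurations with no macroscopic jump or with two. The genuinely delicate point, which I expect to be harder than the central limit theorem itself, is the flatness argument: verifying that the slow variation of $j \mapsto \nu(j)$ near $(1-m)n$ turns the summation over the big jump into a marginalization, thereby removing the correction a fixed-sum conditioning would impose.
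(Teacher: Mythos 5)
Your plan is sound and reaches the right answer, but it takes a genuinely different and much more laborious route than the paper. The paper's proof is two lines: it invokes \cite[Theorem 1]{AL11} (see also \cite[Theorem 19.34]{Jan12} and \cite[Sec.~2.1]{Kor15}), which states that after removing the largest outdegree of $\cT_n$ the remaining outdegrees are asymptotically i.i.d.\ with law $\mu$; hence $(N^{\{1\}}(\cT_n),\ldots,N^{\{M\}}(\cT_n))$ is asymptotically multinomial$(n,\mu_1,\ldots,\mu_M)$ and the multivariate CLT gives exactly the stated covariances. What you propose is essentially a from-scratch proof of that cited input: Proposition~\ref{prop:joint} to pass to the negatively drifted walk conditioned on $\{S_n=-1\}$, the one-big-jump decomposition, the observation that the macroscopic jump escapes every finite $\cA_i$, and the flatness of $\nu(b)\sim c\,b^{-\beta}$ over the $o(n)$ fluctuation window of the bulk sum, which marginalizes rather than conditions that coordinate and thus kills the rank-one correction one would otherwise expect (correctly explaining why the covariance is the unconditioned multinomial one, in contrast with $\gamma_{\cA}^2$ in Theorem~\ref{thm:processus}). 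This mechanism is exactly right, and your identification of where finiteness of the $\cA_i$ and where $\beta>2$ enter is accurate. The trade-off: your route would be self-contained and makes the probabilistic mechanism transparent, but the steps you yourself flag as deferred --- the local one-big-jump estimate for $\P(S_n=-1,\vec J=\vec m)$, the control of trajectories with zero or two macroscopic jumps, and the quantitative flatness argument --- constitute the real work, and they are precisely the content of the condensation results the paper imports as a black box. As written, your argument is a correct blueprint rather than a complete proof; to finish it you would either have to carry out those estimates or, as the paper does, simply cite \cite{AL11} or \cite{Jan12}.
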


\begin{proof}
By \cite[Theorem 1]{AL11} (see \cite[Sec.~2.1]{Kor15} for its use in this context) or \cite[Theorem 19.34]{Jan12}  after removing the largest outdegree in $\cT_{n}$, the other outdegrees are asymptotically i.i.d. with distribution $\mu$. Therefore, for every $M \geq 1$, the law of the vector $\left(N^{\{1\}}(\mathcal{T}_n), \ldots, N^{\{M\}}(\mathcal{T}_n)\right)$ is asymptotically multinomial with parameters $\left( n, \mu_1, \ldots , \mu_M \right)$. The result follows.
\end{proof}

\paragraph*{Conjecture.} We have seen that the conclusions of Theorem \ref{euro} hold for $\mu$ with infinite variance in the domain of attraction of a stable law 
and for $\mu$ a subcritical power law. We believe that these conclusions should hold for any $\mu$ critical with infinite variance, as well as for $\mu$ subcritical with no exponential moment. In particular, we should get, for any $\cA \subset \Z_+$, $({N^\cA (\cT_n) - n \mu_\cA})/{\sqrt{n}} \overset{d}{\rightarrow} \mathcal{N} \left( 0, \mu_\cA \left( 1-\mu_\cA \right) \right)$. However, in the general case, nothing is known about the scaling limits of such GW trees (see \cite{Jan12} for detailed arguments and counterexamples) and no general local limit theorem exists, which prevents us from directly generalizing our methods.

\bibliographystyle{abbrv}
\bibliography{bibli}

\begin{thebibliography}{10}

\bibitem{AD14b}
R.~Abraham and J.-F. Delmas.
\newblock Local limits of conditioned {G}alton-{W}atson trees: the condensation
  case.
\newblock {\em Electron. J. Probab.}, 19:no. 56, 29, 2014.

\bibitem{AD14a}
R.~Abraham and J.-F. Delmas.
\newblock Local limits of conditioned {G}alton-{W}atson trees: the infinite
  spine case.
\newblock {\em Electron. J. Probab.}, 19:no. 2, 19, 2014.

\bibitem{AB15}
L.~Addario-Berry.
\newblock A probabilistic approach to block sizes in random maps.
\newblock {\em Preprint available on arxiv, arXiv:1503.08159}, 2015.

\bibitem{Ald91a}
D.~Aldous.
\newblock The continuum random tree. {I}.
\newblock {\em Ann. Probab.}, 19(1):1--28, 1991.

\bibitem{Ald91b}
D.~Aldous.
\newblock The continuum random tree. {II}. {A}n overview.
\newblock In {\em Stochastic analysis ({D}urham, 1990)}, volume 167 of {\em
  London Math. Soc. Lecture Note Ser.}, pages 23--70. Cambridge Univ. Press,
  Cambridge, 1991.

\bibitem{Ald93}
D.~Aldous.
\newblock The continuum random tree. {III}.
\newblock {\em Ann. Probab.}, 21(1):248--289, 1993.

\bibitem{AL11}
I.~Armend\'{a}riz and M.~Loulakis.
\newblock Conditional distribution of heavy tailed random variables on large
  deviations of their sum.
\newblock {\em Stochastic Process. Appl.}, 121(5):1138--1147, 2011.

\bibitem{Bil68}
P.~Billingsley.
\newblock Convergence of probability measures, first edition.
\newblock {\em Wiley, New-York}, 1968.

\bibitem{Don91}
R.~Doney.
\newblock A bivariate local limit theorem.
\newblock {\em Journal of Multivariate Analysis}, 36(1):95--102, 1991.

\bibitem{Duq03}
T.~Duquesne.
\newblock A limit theorem for the contour process of conditioned
  {G}alton-{W}atson trees.
\newblock {\em Ann. Probab.}, 31(2):996--1027, 2003.

\bibitem{Duq09}
T.~Duquesne.
\newblock An elementary proof of {H}awkes's conjecture on {G}alton-{W}atson
  trees.
\newblock {\em Electron. Commun. Probab.}, 14:151--164, 2009.

\bibitem{DLG02}
T.~Duquesne and J.-F. Le~Gall.
\newblock Random trees, {L}\'evy processes and spatial branching processes.
\newblock {\em Ast\'erisque}, (281):vi+147, 2002.

\bibitem{HK80}
M.~G. Hahn and M.~J. Klass.
\newblock Matrix normalization of sums of random vectors in the domain of
  attraction of the multivariate normal.
\newblock {\em The Annals of Probability}, pages 262--280, 1980.

\bibitem{IL71}
I.~A. Ibragimov and Y.~V. Linnik.
\newblock {\em Independent and stationary sequences of random variables}.
\newblock Wolters-Noordhoff Publishing, Groningen, 1971.
\newblock With a supplementary chapter by I. A. Ibragimov and V. V. Petrov,
  Translation from the Russian edited by J. F. C. Kingman.

\bibitem{JS03}
J.~Jacod and A.~N. Shiryaev.
\newblock {\em Limit theorems for stochastic processes}, volume 288 of {\em
  Grundlehren der Mathematischen Wissenschaften [Fundamental Principles of
  Mathematical Sciences]}.
\newblock Springer-Verlag, Berlin, second edition, 2003.

\bibitem{Jan12}
S.~Janson.
\newblock Simply generated trees, conditioned {G}alton-{W}atson trees, random
  allocations and condensation.
\newblock {\em Probab. Surv.}, 9:103--252, 2012.

\bibitem{Jan16}
S.~Janson.
\newblock Asymptotic normality of fringe subtrees and additive functionals in
  conditioned {G}alton-{W}atson trees.
\newblock {\em Random Structures Algorithms}, 48(1):57--101, 2016.

\bibitem{Kal02}
O.~Kallenberg.
\newblock {\em Foundations of modern probability}.
\newblock Probability and its Applications (New York). Springer-Verlag, New
  York, second edition, 2002.

\bibitem{Kes86}
H.~Kesten.
\newblock Subdiffusive behavior of random walk on a random cluster.
\newblock {\em Ann. Inst. H. Poincar\'e Probab. Statist.}, 22(4):425--487,
  1986.

\bibitem{Kol86}
V.~F. Kolchin.
\newblock {\em Random mappings}.
\newblock Translation Series in Mathematics and Engineering. Optimization
  Software, Inc., Publications Division, New York, 1986.
\newblock Translated from the Russian, With a foreword by S. R. S. Varadhan.

\bibitem{Kor12}
I.~Kortchemski.
\newblock Invariance principles for {G}alton-{W}atson trees conditioned on the
  number of leaves.
\newblock {\em Stochastic Process. Appl.}, 122(9):3126--3172, 2012.

\bibitem{Kor14}
I.~Kortchemski.
\newblock Random stable laminations of the disk.
\newblock {\em Ann. Probab.}, 42(2):725--759, 2014.

\bibitem{Kor15}
I.~Kortchemski.
\newblock Limit theorems for conditioned non-generic {G}alton-{W}atson trees.
\newblock {\em Ann. Inst. Henri Poincar\'{e} Probab. Stat.}, 51(2):489--511,
  2015.

\bibitem{LM07}
J.-M. Labarbe and J.-F. Marckert.
\newblock Asymptotics of {B}ernoulli random walks, bridges, excursions and
  meanders with a given number of peaks.
\newblock {\em Electron. J. Probab.}, 12:no. 9, 229--261, 2007.

\bibitem{LG05}
J.-F. Le~Gall.
\newblock Random trees and applications.
\newblock {\em Probability Surveys}, 2005.

\bibitem{LGLJ98}
J.-F. Le~Gall and Y.~Le~Jan.
\newblock Branching processes in {L}\'evy processes: the exploration process.
\newblock {\em Ann. Probab.}, 26(1):213--252, 1998.

\bibitem{LPP95}
R.~Lyons, R.~Pemantle, and Y.~Peres.
\newblock Conceptual proofs of {$L\log L$} criteria for mean behavior of
  branching processes.
\newblock {\em Ann. Probab.}, 23(3):1125--1138, 1995.

\bibitem{MM03}
J.-F. Marckert and A.~Mokkadem.
\newblock The depth first processes of {G}alton-{W}atson trees converge to the
  same {B}rownian excursion.
\newblock {\em Ann. Probab.}, 31(3):1655--1678, 2003.

\bibitem{MM03b}
J.-F. Marckert and A.~Mokkadem.
\newblock Ladder variables, internal structure of {G}alton-{W}atson trees and
  finite branching random walks.
\newblock {\em J. Appl. Probab.}, 40(3):671--689, 2003.

\bibitem{Min05}
N.~Minami.
\newblock On the number of vertices with a given degree in a {G}alton-{W}atson
  tree.
\newblock {\em Adv. in Appl. Probab.}, 37(1):229--264, 2005.

\bibitem{Nev86}
J.~Neveu.
\newblock Arbres et processus de {G}alton-{W}atson.
\newblock {\em Ann. Inst. H. Poincar\'e Probab. Statist.}, 22(2):199--207,
  1986.

\bibitem{RG79}
S.~Resnick and P.~Greenwood.
\newblock A bivariate stable characterization and domains of attraction.
\newblock {\em Journal of Multivariate Analysis}, 9(2):206--221, 1979.

\bibitem{RY99}
D.~Revuz and M.~Yor.
\newblock {\em Continuous martingales and {B}rownian motion}, volume 293 of
  {\em Grundlehren der Mathematischen Wissenschaften [Fundamental Principles of
  Mathematical Sciences]}.
\newblock Springer-Verlag, Berlin, third edition, 1999.

\bibitem{Riz15}
D.~Rizzolo.
\newblock Scaling limits of {M}arkov branching trees and {G}alton-{W}atson
  trees conditioned on the number of vertices with out-degree in a given set.
\newblock {\em Ann. Inst. Henri Poincar\'e Probab. Stat.}, 51(2):512--532,
  2015.

\bibitem{Rva61}
E.~Rvaceva.
\newblock On domains of attraction of multi-dimensional distributions.
\newblock {\em Select. Transl. Math. Statist. and Probability}, 2:183--205,
  1961.

\bibitem{Ver79}
W.~Vervaat.
\newblock A relation between {B}rownian bridge and {B}rownian excursion.
\newblock {\em Ann. Probab.}, 7(1):143--149, 1979.

\end{thebibliography}
\end{document}